\documentclass[12pt]{amsart}
\usepackage{amsmath}
\usepackage[utf8]{inputenc}
\usepackage{amsthm}\linespread{1.5}
\usepackage{amsfonts}
\usepackage{amssymb}
\usepackage{mathrsfs}
\usepackage{fullpage}
\usepackage[english]{babel}
\usepackage{tikz-cd}
\usepackage{hyperref}
\usepackage{thmtools}

\usepackage{graphicx}
\usepackage{float}
\restylefloat{table}
\restylefloat{figure}

\usepackage{amssymb}
\usepackage{hyperref}
\date{29th September 2020}

\newcommand\n{\mathbf n}
\newcommand{\hooklongrightarrow}{\lhook\joinrel\longrightarrow}

\newtheorem{theorem}{Theorem}
\newtheorem{proposition}{Proposition}
\newtheorem{corollary}{Corollary}
\newtheorem{lemma}{Lemma}
\theoremstyle{remark}

\title[Determinant of representations of $\mathbb{Z}_r\wr S_n$ ]{On the determinant of representations of generalized symmetric groups $\mathbb{Z}_r\wr S_n$}
\author{Amrutha P}\address{Indian Institute of Science Education and Research Thiruvananthapuram, Thiruvananthapuram}\email{amruthap15@iisertvm.ac.in}
\author{T. Geetha}\address{Indian Institute of Science Education and Research Thiruvananthapuram, Thiruvananthapuram}\email{tgeetha@iisertvm.ac.in}
\subjclass[2010]{20G43, 20C30}
\keywords{Symmetric group, Hyperoctahedral group, Wreath products, Chiral partitions}
\begin{document}
\maketitle
\begin{abstract}
In this paper we study the determinant of irreducible representations of the generalized symmetric groups $\mathbb{Z}_r \wr S_n$. We give an explicit formula to compute the determinant
of an irreducible representation of $\mathbb{Z}_r \wr S_n$. Recently, several authors have characterized and counted the number of irreducible representations of a given finite group with nontrivial determinant. Motivated by these results, for given integer $n$, $r$ an odd prime number and $\zeta$ a nontrivial multiplicative character of $\mathbb{Z}_r \wr S_n$ with $n<r$, we obtain an explicit formula to compute $N_{\zeta}(n)$, the number of irreducible representations of $\mathbb{Z}_r \wr S_n$ whose determinant is $\zeta$.
  \end{abstract}
 
\section{Introduction}
The main aim of this paper is to count and characterize the number of irreducible representations of the generalized symmetric group whose composition with the determinant map being the given multiplicative character of $\mathbb{Z}_r \wr S_n$. In a recent paper \cite{amri}, the authors characterize the irreducible representations of $S_n$ having nontrivial determinant by characterizing the corresponding integer partitions $\lambda$ of $n$. They gave a closed formula to count the number of irreducible representations of $S_n$ with nontrivial determinant. In \cite{spallone}, the authors extended this result to all the irreducible finite coxeter groups $W$.  Both in \cite{amri} and \cite{spallone}, the authors used a simple way to read off the determinant of a representation from its $2$-core tower. For given an integer $n$ and a multiplicative character $\zeta$, they gave a closed formula to count the number of irreducible representations with nontrivial determinant $\zeta$ and gave a characterization of all the representations whose determinant is $\zeta$. The generalized symmetric groups are the wreath product group of the cyclic group with the symmetric group which is a natural group theoretic construction with many interesting applications. Some interesting special cases of these groups are the symmetric group and the hyper-octahedral group.  

All the results here and in \cite{amri} and \cite{spallone} have their genesis in \cite{mac}. In \cite{mac}, Macdonald developed a combinatorics for partitions and gave a closed formula to count the number of odd-dimensional Specht modules for the symmetric groups. This number happens to be the product of the powers of $2$ in the binary expansion of $n$ and was obtained by characterizing the $2$-core tower of the odd partitions. In the case of symmetric group, a representation $\rho$ of $S_n$ is said to be a \textit{chiral representation} if the composition of $\rho$ with the determinant map is nontrivial. As the irreducible representations of the symmetric, hyper-octahedral and generalized symmetric group are indexed with partitions, bi-partitions and multipartitions respectively, one can call a partition (resp. a multipartition) is a \textit{chiral partition} if the corresponding representation is chiral.  

One natural question arises out of this is to ask for the count of number of chiral partitions of a given integer $n$. A very usual method to deal with understanding a problem involving the symmetric group is to treat all the $n$ simultaneously. A particular case of this approach is the problem of counting the number of chiral partitions for a given integer $n$ which was first considered by L. Solomon and later posed by Stanley in \cite{Stanley}. A closed formula for the number of chiral partitions was obtained in \cite{amri}, for the case of $S_n$  and in \cite{spallone}, for the case of hyper-octahedral group. In fact, the authors in \cite{spallone} gave much more than just counting the number of chiral partitions by differentiating the two sets of multicompositions with two parts whose determinant is nontrivial. Since in the case of hyper-octahedral group, there are four characters involved such that two of them are trivial and two of them are nontrivial and the process of counting involves counting the number of irreducibles whose determinant is coming from the different nontrivial characters. Our construction in this paper generalizes the formulas in \cite{mac} and certain results from \cite{amri}, \cite{spallone} to the case of $\mathbb{Z}_r \wr S_n$. Throughout this paper, we denote $\mathbb{Z}_r \wr S_n$ with $G(n,r)$.

The main results and the structure of the paper is as follows: In Section 2, we define some preliminary notations and prove few combinatorial results which will be used in the rest of the paper. In Section 3, we set up the definitions and recall some background material for the representation theory of the generalized symmetric groups $G(n,r)$. We give a closed formula to count the number of inequivalent odd degree irreducible representations of $G(n,r)$ and also a formula for the number $m_p(G(n,r))$, the number of inequivalent irreducible complex representations of $G(n,r)$ whose degree is relatively prime to a prime number $p$. For this we uses an approach similar to that of the one used by Macdonald in \cite{mac_1}.

In Section 4, we describe a formula to compute the determinant of the irreducible representations of $G(n,r)$. From the classification of the irreducible representations of $G(n,r)$, for given a primitive $r$th roots of unity $\zeta$, it is easy to note that the possible values of the determinant of an irreducible representation of $G(n,r)$ are given by $\pm \zeta^s$ where $1\leq s\leq r$. In the process of counting and characterizing the multipartitions with a given determinant, the interesting part is that the compositions of $n$ is adequate for this purpose rather than using all of the multipartitions of $n$. For given a composition $a=(a_1,\dots,a_r)$ of $n$, we denote $N_{\zeta^s}(a)$ (resp. $N_{-\zeta^s}(a)$) as the number of multipartitions of $n$ obtained from the composition $a$ whose determinant is ${\zeta^s}$ (resp. ${-\zeta^s}$). 

Investigating further on this in Section 5, we obtain a characterization of the multipartitions of $n$ into $r$ parts whose determinant is $\pm 1$ and a positive power of $\zeta$ by characterizing the underlying  composition $(a_1,\dots,a_r)$ of $n$ involving in the given multipartition. The advantage of this characterization is that irrespective of the component parts of the multipartitions in the determinant formula, it uses only the conditions on the multinomial co-efficients involving in the parts of the underlying composition of $n$. Atlast in the Table \ref{table1}, for given an integer $n$ we provide a list of few special compositions which is worth of mentioning separately, whose determinant of the corresponding irreducible representation could be easily calculated from the characterizations obtained in this section. Next in Section 6, we obtain the following inequalities:
\begin{enumerate}
\item $N_{\zeta ^s}(n)$ (resp. $N_{-\zeta ^s}(n)$) are all equal for $1 \leq s  <r$
\item if $n < r$, $N_{\zeta ^s}(n)$ (resp. $N_{-\zeta ^s}(n)$) are all equal for $1 \leq s \leq r$
\item if $n$ is not a multiple of $r$, $N_{\zeta ^s}(n) \leq N_{1}(n)$ and $N_{-\zeta ^s}(n) \leq N_{-1}(n)$. 
\end{enumerate}
Finally in Section 7, we first present a formula to count the number of multipartitions of $n$ obtained from a composition $a=(a_1,\dots,a_r)$ of $n$ upon permutating its components, such that its determinant is positive (or negative). This result is the starting point of our computation for our main result. Using this we calculate $N_{\zeta ^s}(a)$ (and $N_{-\zeta ^s}(a)$) for certain special compositions $a$ of $n$. For given a composition $a$ of $n$ with $n<r$, $r$ is an odd prime and $1 \leq s \leq r$, the main result provides us with a closed formula to compute $N_{\zeta ^s}(a)$. In Appendix A, we give an alternate proof for the determinant formula in the Section 3 and in Appendix B, we give some interesting tables and graphs computed using Sage Math which gives the values for $N_{\zeta ^s}(n)$ and the graph of $N_{\zeta ^s}(n)$ for small values of $n$ and $r$.
\section{Preliminaries}
In this section we will introduce some notations and we prove some combinatorial results which will be used in section 5.

Throughout this paper we will consider only the finite-dimensional complex representations. Also we mean by a multiplicative character of a group $G$ to be the group homomorphism $G\rightarrow \mathbb{C}^{\times}$. For a representation $\rho$ of $G$, we will denote its character by $\chi_{\rho}$. For a representation $(\rho,V)$ of $G$ we denote $det \rho$ to be the composition of $\rho:G \rightarrow GL(V)$ with the determinant map, where $GL(V)$ is the general linear group. Clearly, $det\rho$ is a multiplicative character of $G$. Given a group $G$, let we denote the derived (commutator) subgroup of $G$ by $D(G)$ and the abelianization of $G$ given by $G/D(G)$ with $G_{ab}$. For given two groups $G$ and $S$ with representations $\rho_G$ and $\rho_S$ respectively, for the external tensor product representation of $G\times S$, we write $\rho_G\boxtimes \rho_S$. 

A representation $\rho$ of $G$ is said to be \textit{chiral} if $det \rho$ is nontrivial that is, the underlying group action is chiral in the sense of [\cite{Olsson}, Section 2].  In the case of $S_n$, $\rho$ is chiral iff $det \rho$ is the sign character. Let $n\geq2$ and $s_1=(12)\in S_n$. For $\lambda$ a partition of $n$, take 
\begin{equation*}
g_{\lambda}=\dfrac{f_{\lambda}-\chi_{\lambda}(s_1)}{2}. 
\end{equation*}
From [\cite{amri},Theorem 8], a partition $\lambda$ is chiral iff $g_{\lambda}$ is odd.
Given any non-negative integer $n$ and a prime $r$, we can write 
\begin{equation}
n=\alpha_0+\alpha_1 r^{k_1}+\alpha_2 r^{k_2}+\dots+\alpha_l r^{k_l}
\end{equation}
with $0\leq \alpha_i\leq r-1$. When $r=2$, one can call this the binary expansion of $n$ and we denote $bin(n)$ to be the set of powers of $2$ in the binary expansion of $n$ and $ord(n)$ for the highest power of $2$ dividing $n$.

Let us recall a result by E. Lucas from \cite{ELucas}.
\begin{theorem}\label{t1}
Let $m,n$ be two non-negative integers. If $r$ is a prime number and 
\begin{align*}
m&=a_kr^k+\dots +a_1r+a_0,\\
n&=b_kr^k+\dots +b_1r+b_0
\end{align*}
are the base $r$ expansions of $m$ and $n$ respectively. Then the following congruence relation holds:
\begin{equation}
{m\choose n} = \prod\limits_{i=0}^k {a_i\choose b_i} \mod r.
\end{equation}
\end{theorem}
Here we use the convention that ${m\choose n}=0$ if $m<n$.

That is, a binomial co-efficient is divisible by a prime $r$ iff atleast one of the base $r$ digits of $n$ is greater than the corresponding digit of $m$.

As an immediate consequence of this when $r=2$, we have the following:
\begin{lemma}\label{l1}
For $m,n$ positive integers, the binomial co-efficient ${m\choose n}$ is always even, iff atleast one of the base $2$ digits of $n$ is greater than the corresponding digit of $m$.
\end{lemma}
The binomial coefficients can be generalized to multinomial coefficients as follows:
\begin{equation*}
{n \choose {a_1,\dots,a_r}} = \dfrac{n!}{a_1!a_2!\dots a_r!} = {a_1\choose a_1} {{a_1+a_2}\choose a_2}\dots{{a_1+\dots +a_r}\choose{a_r} }.
\end{equation*}
\section{Irreducible representations of $G(n,r)$}
Let $n\in \mathbb{N}$. A {\it composition} $a$ of $n$ is a finite sequence of nonnegative integers whose sum is $n$. Let $C(n)$ denotes the set of all compositions of $n$.  A {\it partition} $\lambda$ of $n$ is a composition whose entries are non increasing. That is, $\lambda = (a_1,a_2, \dots , a_m)$
satisfying $a_1\geq a_2\geq \dots \geq a_m >0$ with $\sum_{i=1}^ma_i =n$. We call $a_i$'s to be the \textit{parts} of the partition $\lambda$. We denote the sum of the parts of a composition $a$ by $|a|$.  Let we denote the set of all partitions of $n$ by $P(n)$. We will denote a composition by $a \models n$ and a partition by $\lambda \vdash n$.

\indent Let $S_n$ denotes the symmetric group on $n$ letters $\{1,2,\dots ,n\}$. Let $\mathbb{Z}_r$ be the cyclic group of order $r$. Then $S_n$ acts on $\mathbb{Z}_r^n=\mathbb{Z}_r\times \mathbb{Z}_r\times \dots \times \mathbb{Z}_r$ ($n$ factors) by permuting the co-ordinates. The \textit{generalized symmetric group} $G(n,r)$ is the semi-direct product $\mathbb{Z}_r^n \rtimes S_n$ of $\mathbb{Z}_r^n$ by $S_n$. The group $G(n,r)$ is also called the wreath product of $\mathbb{Z}_r$ by $S_n$ and is denoted by $\mathbb{Z}_r\wr S_n$. Furthermore, in the group $G(n,r)$ if we take $r=1$, we have the symmetric group and if we take $r=2$, we get the hyperoctahedral group $B_n$ respectively.  The order of $G(n,r)$ is $r^nn!$.

\indent A \textit{multicomposition} of an integer $n$ is a sequence $a=(a_1,a_2,\dots,a_m)$ of compositions $a_i=(a_1^{(i)},a_2^{(i)},\dots,a_{s_i}^{(i)})$ such that $|a|=\sum_{i=1}^m |a_i|=n$. A \textit{multipartition} is a multicomposition in which each of it parts $a_i$  are also partitions. We will denote the set of all multicompositions and multipartitions of $n$ into $r$ parts respectively by $C(n,r)$ and $P(n,r)$. The representation theory of $G(n,r)$ is well-known and see \cite{JamesKerber} and \cite{Kerber} for more details. 

\indent Each $a=(a_1,a_2,\dots,a_r)\in C(n)$ can be represented by a \textit{Young diagram} $[a]$ which is a left-justified array of boxes with $a_i$ boxes in the $i$th row. For $a\in C(n)$, a \textit{row (column) standard Young tableau} of shape $a$ is the filling of boxes in the Young diagram of $a$ with entries from $\{1,2,\dots,n\}$ increasing along rows (columns respectively). A \textit{standard Young tableau} is the one which is both row and column standard. The symmetric group acts on the set of all Young tableaux by permuting the entries of the tableaux. 

 For $a\in C(n)$, define the unique tableau of shape $a$, $T_{a}$ which is a row standard Young tableau in which the integers $1,2,\dots,n$ entered in increasing order from left to right along the rows of $a$. The \textit{Young subgroup} $S_a=S_{a_1}\times \dots \times S_{a_r}$ of $S_n$ is the row stabilizer of $T_{a}$. For $a \in C(n)$, $a = (a_1,a_2,\dots,a_r)$, define $t_0=0$ and $t_i=\sum\limits _{j=1}^ia_j,\,i=1,\dots,n$. Let the \textit{generalized symmetric group} on $a_i$ letters $\{t_{i-1}+1,\dots,t_i\},\, i=1,\dots,m$ be denoted by $G(a_i,r)$. Let $G(0,r)$ be the trivial subgroup of $G(n,r)$. The group $G(a_1,r)\times\dots\times G(a_r,r)$ is called the \textit{generalized Young subgroup} determined by $a$ and is denoted by $G(a,r)$. 

The inequivalent irreducible complex representations of $G(n,r)$ is indexed with the multipartitions $\lambda \in P(n,r)$ and we denote the corresponding irreducible representation by $\rho_{\lambda}$.  Let $\zeta^i$ denote the character of $\mathbb{Z}_r^n$, whose restriction to  each factor $\mathbb{Z}_r$  sends the identity to $\zeta^i$, where $\zeta$ is a primitive $r$th roots of unity. As it is $S_n$- invariant, it extends to a multiplicative character $\zeta^i$ in $G(n,r)$.  

Let $\lambda\vdash n$. Consider the extensions of the representation $\rho_\lambda$ of $S_n$ to $G(n,r)$ namely
\begin{center}
${\rho_\lambda}^k (x;w) = \zeta^k (x) \rho_{\lambda} (w)$ \hspace{1cm}  $0\leq k\leq r-1$
\end{center}
for $x \in \mathbb{Z}_r^n$ and $w\in S_n$. Let $\lambda=(\lambda_1,\dots,\lambda_r)\in P(n,r)$ with $\lambda_k \vdash a_k,1 \leq k \leq r$. Define
\begin{center}
$\rho _\lambda= {Ind}_{G(\lambda,r)} ^{G(n,r)} \boxtimes _{k=0}^{r-1} \rho_{\lambda_{k+1}}^i$.
\end{center}
Then $\{\rho _{\lambda} |\lambda \in P(n,r) \}$ is a complete set of representatives of the set of isomorphism classes of irreducible representation of $G(n,r)$. The dimension of the representation space $V_{\lambda} $ is given by
\begin{equation}\label{dimV}
f_{\lambda}= dimV_{\lambda} = f_{\lambda_1}\dots f_{\lambda_r}  {n \choose {a_1,\dots,a_r}}.
\end{equation}
We will denote the number of odd dimensional irreducible representations of $S_n$ (resp. $G(n,r)$) by $A(n)$ (resp. $A(n,r))$.
\begin{theorem}\label{t2}
The number of inequivalent irreducible (complex) representations of the generalized symmetric group $G(n,r)$ with odd degree is $r^{|bin(n)|} A(n)$.
\end{theorem}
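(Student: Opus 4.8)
The plan is to reduce the count directly via the dimension formula \eqref{dimV}. Since $f_{\lambda} = \binom{n}{a_1,\dots,a_r}\prod_{k=1}^r f_{\lambda_k}$, an irreducible $\rho_\lambda$ has odd degree if and only if both the multinomial coefficient $\binom{n}{a_1,\dots,a_r}$ is odd and every factor $f_{\lambda_k}$ is odd. First I would fix an underlying composition $a = (a_1,\dots,a_r) \models n$ with $\binom{n}{a_1,\dots,a_r}$ odd and observe that, by the very definition of $A(a_k)$, the number of multipartitions $\lambda$ lying over $a$ with all $f_{\lambda_k}$ odd is exactly $\prod_{k=1}^r A(a_k)$. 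Summing over compositions then gives
\begin{equation*}
A(n,r) = \sum_{\substack{a \models n \\ \binom{n}{a_1,\dots,a_r}\text{ odd}}} \prod_{k=1}^r A(a_k).
\end{equation*}

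Next I would pin down which compositions contribute. Writing the multinomial coefficient as the telescoping product $\binom{a_1}{a_1}\binom{a_1+a_2}{a_2}\cdots\binom{a_1+\dots+a_r}{a_r}$ and applying Lemma \ref{l1} to each factor, one sees that $\binom{n}{a_1,\dots,a_r}$ is odd precisely when the binary supports of $a_1,\dots,a_r$ are pairwise disjoint; since the parts sum to $n$ with no carries, these supports then partition the set $bin(n)$ of powers of $2$ occurring in $n$. Hence the contributing compositions are in bijection with the functions $bin(n) \to \{1,\dots,r\}$ (send each power of $2$ to the index of the part that carries it), and there are exactly $r^{|bin(n)|}$ of them.

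The key step, and the one that makes the sum collapse, is to show that the summand $\prod_{k=1}^r A(a_k)$ is independent of the contributing composition $a$, and in fact always equals $A(n)$. By Macdonald's formula, $A(m)$ is the product of the powers of $2$ appearing in $bin(m)$, i.e. $A(m) = 2^{\sigma(m)}$ with $\sigma(m) = \sum_{2^j \in bin(m)} j$. Because the supports of the $a_k$ are disjoint and exhaust $bin(n)$, the exponents add: $\sum_{k=1}^r \sigma(a_k) = \sigma(n)$, so $\prod_{k=1}^r A(a_k) = 2^{\sigma(n)} = A(n)$ for every contributing $a$. The hard part is precisely recognizing this multiplicativity of $A$ across disjoint binary supports; once it is in hand, the right-hand side is a sum of $r^{|bin(n)|}$ equal terms, giving
\begin{equation*}
A(n,r) = r^{|bin(n)|}\, A(n),
\end{equation*}
as claimed.
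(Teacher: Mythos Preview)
Your proposal is correct and follows essentially the same route as the paper's proof: both reduce oddness of $f_\lambda$ to oddness of the multinomial coefficient together with oddness of each $f_{\lambda_k}$, characterize the odd multinomials as those compositions whose binary supports partition $bin(n)$ (hence $r^{|bin(n)|}$ of them), and then observe that over each such composition the number of odd multipartitions is $A(n)$. Your write-up is in fact more explicit at the final step, invoking Macdonald's formula $A(m)=2^{\sigma(m)}$ to justify $\prod_k A(a_k)=A(n)$, whereas the paper simply asserts this as ``clear from the definition of $A(n)$''.
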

\begin{proof}
From equation (\ref{dimV}), $f_{\lambda}$ is odd if and only if each $f_{\lambda_k}$'s for $1 \leq k \leq r$ and the multinomial coefficient ${n \choose {a_1,\dots,a_r}} $ are odd.
Now we claim that, given any $n$ and $r$, we have exactly $r^{|bin(n)|}$ odd multinomial coefficients in the expansion of ${(x_1+\dots+x_r)}^n$.

The multinomial coefficient can be written as
$${n \choose {a_1,\dots,a_r}} = {n \choose {a_1}} {n-a_1 \choose {a_2}}{n-(a_1+a_2) \choose {a_3}} \dots{a_r \choose {a_r}}.$$ Any ${n \choose {k}} $ is odd if and only if  whenever for every position for which $k$ has a one in binary, n also has a one. So, we have to choose $r$ numbers $a_1,\dots,a_r$ such that $bin(n) =bin(a_1) \cup bin(a_2) \cup\dots \cup bin(a_r)$, with $bin(a_1) \cap bin(a_2) \cap\dots\cap bin(a_r) = \emptyset $. Given any element in $bin(n)$ we have $r$ choices to place it, hence the total number of ways is equal to  $r^{|bin(n)|}$.

It is clear from the definition of $A(n)$ and the above description of the odd multinomial coefficients that given any odd multinomial coefficient we have exactly $A(n)$ terms such that the product $f_{\lambda_1}\dots f_{\lambda_r}$ is odd where each $\lambda_k  \vdash a_k$, $k\in \{1,\dots,r\}$.
\end{proof}
Let $G$ be a finite group and $p$ be a prime number, and let we denote $m_p(G)$ to be the number of inequivalent irreducible (complex) representations of $G$ whose degree is relatively prime to $p$. The formula for $m_p(G)$ when $G$ is the symmetric group and the hyperoctahedral group was derived in \cite{mac} and \cite{mac_1} respectively. Using the argument similar to that of in \cite{mac}, we have the following result.
\begin{theorem}\label{t3}
For any prime $p$ and an integer $r$, we have 
\begin{equation}
m_p(G(n,r)) =  \prod_{k\geq 0} [P(x)^{rp^k}]_{x^{\alpha_k}}
\end{equation}
where $\alpha_k$ is as in equation (1) and $P(x)$ is the partition generating function.
\end{theorem}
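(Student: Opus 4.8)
The plan is to reduce the count to the symmetric group case via the dimension formula and then assemble the answer by a Cauchy product over base-$p$ digit positions. Throughout, write $n=\sum_{j\ge 0}\alpha_j p^j$ with $0\le \alpha_j\le p-1$ for the base-$p$ expansion of $n$ (this is equation (1) with the prime taken to be $p$ rather than $r$). By the classification and the dimension formula (\ref{dimV}), an irreducible $\rho_\lambda$ with $\lambda=(\lambda_1,\dots,\lambda_r)$, $\lambda_k\vdash a_k$, and $a=(a_1,\dots,a_r)\models n$, has degree $f_\lambda=f_{\lambda_1}\cdots f_{\lambda_r}\binom{n}{a_1,\dots,a_r}$. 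Since these are multiplied, $p\nmid f_\lambda$ holds exactly when $p\nmid f_{\lambda_k}$ for every $k$ and $p\nmid\binom{n}{a_1,\dots,a_r}$. Grouping the irreducibles by their underlying composition $a$ and writing $m_p(S_{a_k})=\#\{\mu\vdash a_k:\ p\nmid f_\mu\}$, this first step yields
\[
m_p(G(n,r))=\sum_{\substack{a\models n\\ p\,\nmid\,\binom{n}{a_1,\dots,a_r}}}\ \prod_{k=1}^r m_p(S_{a_k}).
\]
Thus the problem decouples into the known $S_n$ count on each block and the arithmetic constraint imposed by the multinomial coefficient.

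Next I would feed in the two ingredients. For the blocks I invoke Macdonald's formula from \cite{mac} (the $r=1$ case, i.e.\ $G(a_k,1)=S_{a_k}$): writing $a_k=\sum_{j\ge 0}\beta_{k,j}p^j$ in base $p$, one has $m_p(S_{a_k})=\prod_{j\ge 0}\,[P(x)^{p^j}]_{x^{\beta_{k,j}}}$. For the constraint I apply the multinomial form of Theorem \ref{t1} (Lucas/Kummer): $p\nmid\binom{n}{a_1,\dots,a_r}$ precisely when the base-$p$ addition $a_1+\cdots+a_r=n$ has no carries, that is, $\sum_{k=1}^r\beta_{k,j}=\alpha_j$ for every digit position $j$. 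Consequently the admissible compositions $a$ are in bijection with the integer arrays $(\beta_{k,j})_{1\le k\le r,\ j\ge 0}$ satisfying $0\le\beta_{k,j}\le p-1$ and $\sum_k\beta_{k,j}=\alpha_j$, and I may replace the sum over $a$ by a sum over such arrays.

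Substituting both ingredients and interchanging the products over $k$ and $j$, the crucial point is that the admissibility condition couples the entries $\beta_{k,j}$ only within a fixed digit position $j$, so the entire sum factors as a product over $j$:
\[
m_p(G(n,r))=\prod_{j\ge 0}\left(\ \sum_{\beta_{1,j}+\cdots+\beta_{r,j}=\alpha_j}\ \prod_{k=1}^r [P(x)^{p^j}]_{x^{\beta_{k,j}}}\right),
\]
where the omitted bounds $\beta_{k,j}\le p-1$ are automatic because $\alpha_j\le p-1$. Each inner sum is exactly the convolution computing the coefficient of $x^{\alpha_j}$ in the $r$-fold product $\bigl(P(x)^{p^j}\bigr)^r=P(x)^{rp^j}$, i.e.\ it equals $[P(x)^{rp^j}]_{x^{\alpha_j}}$, and multiplying over $j$ gives the claim. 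I expect the main obstacle to lie precisely in justifying this clean factorization across digit positions: one must check that, under Theorem \ref{t1}, both the product $\prod_k m_p(S_{a_k})$ and the carry-free constraint split position by position, and then recognize the per-position sum as a Cauchy product in $P(x)$. As a sanity check on the final formula I would record the case $p=2$, where $[P(x)^{r2^k}]_{x^0}=1$ and $[P(x)^{r2^k}]_{x^1}=r2^k$, so the product collapses to $r^{|bin(n)|}A(n)$, recovering Theorem \ref{t2}.
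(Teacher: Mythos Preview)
Your argument is correct and is precisely the Macdonald-style argument the paper points to without detail: the paper states Theorem \ref{t3} immediately after ``Using the argument similar to that of in \cite{mac}'' and gives no further proof, so your decomposition via the dimension formula (\ref{dimV}), the multinomial Lucas/Kummer criterion, Macdonald's $m_p(S_{a_k})$ formula, and the digit-by-digit factorization into coefficients of $P(x)^{rp^j}$ is exactly the intended route, fully fleshed out. Your closing sanity check recovering Theorem \ref{t2} at $p=2$ is also the same observation the paper makes right after the statement.
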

Then Theorem \ref{t2} can be obtained as a corollary to the above theorem with a special case of $p$ being $2$ or also, can be proved independently using the basic combinatorial techniques as shown.

\section{Determinant of $G(n,r)$}
Let us first briefly review the results from \cite{det} and \cite{spallone}. The following proposition gives the formula for the determinant of an induced representation of a finite group.
\begin{proposition}[29.2,\cite{det}]\label{p5}
Let $G$ be a finite group, $H$ is  a subgroup of $G$ and $\rho$ a representation of $H$. If $\pi = Ind_H^G \rho $, then 
\begin{equation*}
det \pi = det (\mathbb{C}[G/H])^{dim\rho} det{\ }\rho \circ ver _{G/H}, 
\end{equation*}
where $\mathbb{C}[G/H]=Ind_H^G 1$.
\end{proposition}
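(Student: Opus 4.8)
The plan is to compute $\det\pi(g)$ directly from an explicit matrix realization of the induced representation and then to recognize the two resulting factors. First I would fix a transversal $\{g_1,\dots,g_m\}$ for the left cosets of $H$ in $G$, where $m=[G:H]$, so that the representation space decomposes as $V=\bigoplus_{i=1}^m g_i\otimes W$, where $W$ is the space of $\rho$ and $d=\dim\rho$. For each $g\in G$, left translation permutes the cosets, yielding a permutation $\sigma_g\in S_m$ determined by $g\,g_iH=g_{\sigma_g(i)}H$, together with elements $h_i=h_i(g)\in H$ defined by $g\,g_i=g_{\sigma_g(i)}h_i$. In this block decomposition, $\pi(g)$ is a block monomial matrix: it sends $g_i\otimes W$ to $g_{\sigma_g(i)}\otimes\rho(h_i)W$, so that its $(\sigma_g(i),i)$ block equals $\rho(h_i)$ and all other blocks vanish.

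The determinant of such a block monomial matrix factors as a block-permutation contribution times a diagonal-block contribution. Writing $\pi(g)=P_{\sigma_g}\,D_g$, where $P_{\sigma_g}$ is the permutation matrix of $\sigma_g$ acting on $m$ blocks of size $d$ and $D_g$ is block diagonal with $i$-th block $\rho(h_i)$, I would first treat the permutation factor. As a permutation of the $md$ coordinates, $P_{\sigma_g}$ has sign $(\operatorname{sgn}\sigma_g)^d$; since $g\mapsto\operatorname{sgn}\sigma_g$ is exactly the determinant of the permutation representation of $G$ on $\mathbb{C}[G/H]=Ind_H^G 1$, this factor contributes $\det(\mathbb{C}[G/H])(g)^{d}=\det(\mathbb{C}[G/H])^{\dim\rho}(g)$.

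Next I would identify the diagonal-block factor $\det D_g=\prod_{i=1}^m\det\rho(h_i)$ with $(\det\rho\circ ver_{G/H})(g)$. The key observation is that the character $\det\rho\colon H\to\mathbb{C}^\times$ takes values in an abelian group and hence factors through the abelianization $H_{ab}=H/D(H)$; therefore $\prod_i\det\rho(h_i)=\det\rho\!\left(\prod_i h_i\bmod D(H)\right)$. By the definition of the Verlagerung (transfer) map, the class $\prod_i h_i\bmod D(H)$ is precisely $ver_{G/H}(g)\in H_{ab}$, so this factor equals $(\det\rho\circ ver_{G/H})(g)$, and multiplying the two contributions gives the asserted formula.

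I expect the main obstacle to lie in the diagonal-block step, namely in verifying that the product $\prod_i h_i$ read modulo $D(H)$ is genuinely well defined, independent of the choice of transversal and of the labelling of cosets, and that $g\mapsto\prod_i h_i\bmod D(H)$ is a homomorphism $G\to H_{ab}$. This is exactly the content of the standard construction of the transfer, so I would either invoke that construction directly or reprove the cocycle identity $h_i(g_1g_2)=h_{\sigma_{g_2}(i)}(g_1)\,h_i(g_2)$, which after reindexing and abelianizing yields multiplicativity. The permutation step, by contrast, is routine once one records that permuting $m$ blocks of size $d$ contributes the $d$-th power of the ordinary sign.
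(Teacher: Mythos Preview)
Your argument is correct and is essentially the standard proof of this determinant formula for an induced representation. Note, however, that the paper does not supply its own proof of this proposition: it is quoted as a known result from \cite{det} (Section~29.2) and used as a black box, so there is no in-paper proof against which to compare your approach. Your computation via the block monomial realization of $\pi(g)$, splitting into the block-permutation factor $(\det\mathbb{C}[G/H])^{\dim\rho}$ and the block-diagonal factor $\prod_i\det\rho(h_i)=\det\rho\bigl(\prod_i h_i\bmod D(H)\bigr)=(\det\rho\circ ver_{G/H})(g)$, is exactly the argument given in the cited reference.
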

The key part in our calculation is the verlagerung map  $ver_{G/H} : G_{ab} \rightarrow H_{ab}$ between the abelianizations of $G$ and $H$. 
Let us briefly recall the $ver$ map:
 Let  $t:G/H\rightarrow G$ be a section of the canonical projection and let $g\in G$. For each $x\in G/H$, we have
$gt(x)=t(y)h_{x,g}$ for some $y \in G/H$ and $h_{x,g}\in H$.
Considering $G/H$ as a $G$-set, we have $y={^{g}x}$. Then
\begin{center}
$ver_{G/H} (g \mod D(G)) = \prod\limits_{x\in G/H} h_{x,g}\mod D(H)$.
\end{center}
Let $G=S_n$ and $H$ be the Young subgroup $S_{a_1} \times S_{a_2} \times\dots\times S_{a_r}$ with $a_1+a_2+\dots+a_r =n$. Then
$G_{ab}= S_n/A_n$ and
$H_{ab}= S_{a_1}/A_{a_1} \times S_{a_2}/A_{a_2} \times\dots\times S_{a_r}/A_{a_r}$. Now we generalize the Proposition 7 of \cite{spallone} as follows:
\begin{proposition}\label{p6}
 Let $a_1+a_2+\dots+a_r =n$. Then the map 
\begin{center}
$ver= ver _{S_n/S_{a_1} \times S_{a_2} \times\dots\times S_{a_r}} : S_n/A_n\rightarrow S_{a_1}/A_{a_1} \times S_{a_2}/A_{a_2} \times\dots\times S_{a_r}/A_{a_r}$
\end{center}
is given by
\begin{center}
$ver (\tau_n) = \Big(\tau_{a_1} ^ {{{n-2}\choose{{a_1-2},{a_2},\dots,{a_r}}}},\tau_{a_2} ^ {{{n-2}\choose{{a_1},{a_2-2},\dots,{a_r}}}},\dots,\tau_{a_r}^{{{n-2}\choose{{a_1},{a_2},\dots,{a_r-2}}}}\Big)$
\end{center}
where $\tau_n$ is any transposition in $S_n$ or trivial if $n<2$.
\end{proposition}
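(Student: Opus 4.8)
The plan is to compute the transfer directly from the definition recalled just above the statement, evaluated on the generator of $S_n/A_n$. Since $S_n/A_n\cong\mathbb{Z}/2\mathbb{Z}$ is generated by the class of any transposition, and since $ver$ is a homomorphism independent of the chosen section $t$, it suffices to determine $ver(\tau_n)$ for the single transposition $\tau_n=(1\,2)$; the formula for $n<2$ is vacuous. First I would fix a convenient model for the coset space: I identify $S_n/(S_{a_1}\times\cdots\times S_{a_r})$ with the set of ways of distributing $\{1,\dots,n\}$ into $r$ labelled boxes $B_1,\dots,B_r$ of sizes $a_1,\dots,a_r$ (equivalently, ordered set partitions of type $(a_1,\dots,a_r)$), of which there are exactly ${n \choose {a_1,\dots,a_r}}$, with $S_n$ acting by permuting entries. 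Under this identification $\tau_n=(1\,2)$ acts on a distribution by interchanging the box-labels of $1$ and $2$.

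Next I would invoke the orbit form of the transfer: decomposing the coset space into orbits under $\langle\tau_n\rangle$ and choosing a representative $x$ of each orbit, one has $ver(\tau_n)=\prod_x t(x)^{-1}\tau_n^{\ell_x}t(x)\bmod D(H)$, where $\ell_x$ is the length of the orbit of $x$. Because $\tau_n$ has order $2$, every orbit has length $1$ or $2$. A length-$2$ orbit contributes $t(x)^{-1}\tau_n^2 t(x)=e$, so such orbits are invisible in $H_{ab}$; equivalently, in the paper's unfactored product the two factors $h_{x,\tau_n}\,h_{{}^{\tau_n}x,\tau_n}$ telescope to the identity. Hence only the cosets \emph{fixed} by $\tau_n$ contribute. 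A distribution is fixed precisely when $1$ and $2$ lie in a common box $B_i$; for such a coset $h_{x,\tau_n}=t(x)^{-1}\tau_n t(x)$ is a transposition supported inside $B_i$, so it lies in the factor $S_{a_i}$ and maps to the nontrivial class $\tau_{a_i}$ in $S_{a_i}/A_{a_i}$.

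It then remains to count, for each $i$, the fixed cosets with $1,2\in B_i$. Placing $1$ and $2$ in $B_i$ and distributing the remaining $n-2$ elements among the boxes (so that $B_i$ needs $a_i-2$ more) yields ${n-2 \choose {a_1,\dots,a_i-2,\dots,a_r}}$ such cosets. Since each contributes $\tau_{a_i}$ and the $i$-th abelianization factor has order $2$, the $i$-th component of $ver(\tau_n)$ equals $\tau_{a_i}$ raised to this count, which is exactly the asserted formula; the convention $\tau_{a_i}=e$ for $a_i<2$ automatically absorbs any box too small to receive both $1$ and $2$.

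The step requiring the most care is the transfer bookkeeping rather than the combinatorics: one must confirm that the length-$2$ orbits genuinely drop out in $H_{ab}$ and that the surviving contribution from each fixed coset is a transposition lying in the correct factor $S_{a_i}$, independently of the section $t$. Once this is pinned down, the exponents are a routine multinomial count, and the result reduces to Proposition~7 of \cite{spallone} when $r=2$.
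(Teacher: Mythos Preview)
Your proposal is correct and follows essentially the same argument as the paper: identify the coset space with ordered set partitions of type $(a_1,\dots,a_r)$, evaluate the transfer on $\tau_n=(1\,2)$, observe that cosets not fixed by $\tau_n$ cancel in pairs (the paper writes this as $h_{x,s_1}h_{{}^{s_1}x,s_1}=1$, you phrase it via the orbit form of the transfer), and then count the fixed cosets with $1,2$ in box $B_i$ by the multinomial $\binom{n-2}{a_1,\dots,a_i-2,\dots,a_r}$. The only difference is cosmetic: you invoke the orbit description of the transfer while the paper works directly from the product $\prod_x h_{x,g}$, but you explicitly note the equivalence, so the two proofs are the same in substance.
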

\begin{proof}
 Let $J_n=\{1,2,\dots,n\}$ and let we denote $\mathscr{P}(J_n)$ by set of all ordered multisets of subsets of $J_n$, whose union is $J_n$ and each set is of size $a_1,a_2,\dots,a_r$ respectively. We have an action of $S_n$ on $\mathscr{P}(J_n)$ by permuting the elements in the multiset. Let us fix $J = \{\{1,2,\dots,a_1\},\{a_1+1,\dots,a_1+a_2\},\dots,\{n-a_r,n-a_{r+1},\dots,n\}\} \in \mathscr{P}(J_n) $ and for $x\in \mathscr{P}(J_n)$, we denote $x_k$ the $k$th component in the multiset which is of size $a_k$. Then the map $S_n \rightarrow \mathscr{P}(J_n) $ defined by $g \rightarrow {^{g}J}$ descends to an isomorphism of $S_n/S_{a_1} \times S_{a_2} \times\dots\times S_{a_r}$ with $\mathscr{P}(J_n) $ as $S_n$-sets. Pick any section $t:\mathscr{P}(J_n)  \rightarrow S_n$, then
 \begin{equation*}
 {^{t(x)} J }= x, \text{ for all } x \in \mathscr{P}(J_n).
 \end{equation*}
 For concreteness, let us assume $\tau$ be the transposition $s_1=(12)$. Since
 \begin{equation*}
 h_{x,{s_1}}=t({^{s_1} x})^{-1} s_1t(x),
 \end{equation*}
 we have  $h_{x,{s_1}} h_{{^{s_1} x},{s_1}} =1$. Thus 
 \begin{align*}
ver (\tau_n)& = \prod_{x\in \mathscr{P}(J_n)} h_{x,{s_1}} \mod A_n\\
 &= \prod_{x\in \mathscr{P}(J_n)| {^{s_1}x}=x} h_{x,{s_1}} \mod A_n,
 \end{align*}
and ${^{s_1}x}=x$ if and only if $\{1,2\}$ lies entirely in one set in the multiset $x$. In this case $h_{x,{s_1}} =\tau_{a_i}$, for some transposition $\tau_{a_i}\in S_{a_i}$. There are ${{{n-2}\choose{{a_1},{a_2},\dots,{a_i-2},\dots,{a_r}}}}$ such elements of $\mathscr{P}(J_n)$ containing $\{1,2\}$ in its ith set. Therefore, 
 \begin{equation*}
ver (\tau_n) = \Big(\tau_{a_1} ^ {{{n-2}\choose{{a_1-2},{a_2},\dots,{a_r}}}},\tau_{a_2} ^ {{{n-2}\choose{{a_1},{a_2-2},\dots,{a_r}}}},\dots,\tau_{a_r}^{{{n-2}\choose{{a_1},{a_2},\dots,{a_r-2}}}}\Big).
\end{equation*}
\end{proof}
We can describe the multiplicative characters of $G(n,r)$ as follows: Denote $\varepsilon_i:\mathbb{Z}_r^n\longrightarrow\{\pm\zeta^i\}$ for $i\in\{0,1,\dots,r-1\}$ for the character whose restriction to each factor $\mathbb{Z}_r$ is $\zeta^i$. Being an $S_n$-invariant, we can extend this to a multiplicative character of $G(n,r)$. We may write $sgn$ for the composition of projection $G(n,r)\rightarrow S_n$ with the sign character of $S_n$. And we will denote $sgn^i=\varepsilon_i sgn$ for  $i\in\{1,\dots,r-1\}$. Thus the $2r$ multiplicative characters of $G(n,r)$ are $\{1,\varepsilon_1,\dots,\varepsilon_{r-1},sgn,sgn^1,\dots,sgn^{r-1}\}$.

Now let us assume $G=G(n,r)$ and $H$ to be the generalized Young subgroup, $H= G(a_1,r) \times G(a_2,r)\times\dots\times (a_r,r)$ with $a_1+a_2+\dots+a_r =n$. The derived quotient ${G(n,r)}_{ab}$ is isomorphic to $\mathbb{Z}_2 \times \mathbb{Z}_r$ (which is $\mathbb{Z}_{2r}$, if $r$ is odd) and is generated by $\tau_n$ and $e_n$, where $e_n$ is any vector in $\mathbb{Z}_r^n$ with $\varepsilon(e_n)= \zeta$. We can identify $H_{ab}$ with ${G(a_1,r)}_{ab} \times {G(a_2,r)}_{ab}\times\dots\times {G(a_r,r)}_{ab}.$ We will now describe explicitly the $ver$ map when $G=G(n,r)$ in the following proposition.
\begin{proposition}\label{p7}
The map 
\begin{center}
$ver= ver _{G/H} : {G(n,r)}_{ab}\rightarrow {G(a_1,r)}_{ab} \times {G(a_2,r)}_{ab}\times\dots\times {G(a_r,r)}_{ab}$
\end{center}
is given by
\begin{center}
$ver (\tau_n) = \Big(\tau_{a_1} ^ {{{n-2}\choose{{a_1-2},{a_2},\dots,{a_r}}}},\tau_{a_2} ^ {{{n-2}\choose{{a_1},{a_2-2},\dots,{a_r}}}},\dots,\tau_{a_r}^{{{n-2}\choose{{a_1},{a_2},\dots,{a_r-2}}}}\Big)$
\end{center}
and 
\begin{center}
$ver (e_n) = \Big(e_{a_1} ^ {{n-1}\choose{{a_1-1},{a_2},\dots,{a_r}}},e_{a_2} ^ {{n-1}\choose{{a_1},{a_2-1},\dots,{a_r}}},\dots,e_{a_r}^{{n-1}\choose{{a_1},{a_2},\dots,{a_r-1}}}\Big)$.
\end{center} 
\end{proposition}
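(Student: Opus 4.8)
The plan is to compute the verlagerung map on each of the two generators $\tau_n$ and $e_n$ of $G(n,r)_{ab}$ separately, using the same $S_n$-set model $\mathscr{P}(J_n)$ of the coset space $G(n,r)/H$ that was set up in the proof of Proposition \ref{p6}. The crucial observation is that $G(n,r)/H$ and $S_n/(S_{a_1}\times\dots\times S_{a_r})$ are the same $S_n$-set: the $\mathbb{Z}_r^n$ part of $G$ is already contained in $H=G(a_1,r)\times\dots\times G(a_r,r)$, so the cosets are again indexed by ordered multisets of subsets of $J_n$ of sizes $a_1,\dots,a_r$. I would therefore reuse the section $t:\mathscr{P}(J_n)\to S_n\subseteq G(n,r)$ and the formula $ver(g)=\prod_{x}h_{x,g}\bmod D(H)$.

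For the $\tau_n$-component, the computation is \emph{verbatim} that of Proposition \ref{p6}: since $\tau_n=s_1$ lies in the $S_n$ factor and acts on $\mathscr{P}(J_n)$ exactly as before, the cancellation $h_{x,s_1}h_{{}^{s_1}x,s_1}=1$ for $x\neq{}^{s_1}x$ still holds, leaving only the fixed points where $\{1,2\}$ lies in a single block. This yields precisely the stated multinomial exponents, and I would simply cite Proposition \ref{p6} rather than redo it. The genuinely new content is the computation of $ver(e_n)$, so this is where I would spend the effort.

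For $ver(e_n)$, I would take $e_n$ to be the generator of a single $\mathbb{Z}_r$ factor, say $e_n=(1,0,\dots,0)\in\mathbb{Z}_r^n$ sitting in the first coordinate, so that $\varepsilon(e_n)=\zeta$. The key point is that $e_n\in\mathbb{Z}_r^n$ acts \emph{trivially} on $\mathscr{P}(J_n)$ (the $S_n$-action on cosets factors through the projection $G(n,r)\to S_n$, and $e_n$ projects to the identity). Hence $^{e_n}x=x$ for \emph{every} $x\in\mathscr{P}(J_n)$, and there is no pairwise cancellation; instead every coset contributes. For each $x$, the local term $h_{x,e_n}=t(x)^{-1}e_n\,t(x)$ is the conjugate of $e_n$, i.e. the generator $1\in\mathbb{Z}_r$ placed in whichever coordinate $t(x)$ sends the first coordinate to, say coordinate $j$; this lands in the block $x_k$ containing $j$, contributing $e_{a_k}$ to the $k$th factor of $H_{ab}$. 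I would then count, for each block index $k$, how many cosets $x$ place the distinguished coordinate (equivalently the element $1\in J_n$) in the $k$th block: fixing $1$ in block $k$ leaves $n-1$ elements to distribute into blocks of sizes $a_1,\dots,a_k-1,\dots,a_r$, giving exactly $\binom{n-1}{a_1,\dots,a_k-1,\dots,a_r}$ such cosets. Reducing modulo $D(H)$ collapses each block's contributions to a power of $e_{a_k}$ with that exponent, which is the claimed formula.

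The main obstacle I anticipate is purely bookkeeping: being careful that the local term $h_{x,e_n}$ really is a \emph{single} generator $e_{a_k}$ in the correct block (rather than spread across blocks), and that its block is determined solely by where the distinguished coordinate travels, independent of the chosen section $t$ up to $D(H)$. Once one verifies that $e_n$ fixes every coset and that conjugation merely relocates the nonzero coordinate, the counting is a direct multinomial count of coset representatives with one element pre-placed. I would make explicit that well-definedness of $ver$ guarantees independence of the section, so that choosing the convenient permutation-block section of Proposition \ref{p6} is legitimate.
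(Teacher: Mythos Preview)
Your proposal is correct and follows essentially the same route as the paper: identify $G(n,r)/H$ with $S_n/(S_{a_1}\times\dots\times S_{a_r})\cong\mathscr{P}(J_n)$, reuse the permutation section $t$ from Proposition~\ref{p6} to read off $ver(\tau_n)$ directly, and for $ver(e_n)$ observe that $e_n$ fixes every coset so that $h_{x,e_n}=t(x)^{-1}e_1 t(x)$ is a single $e_{a_k}$ determined by which block of $x$ contains $1$, then count those $x$ by the multinomial $\binom{n-1}{a_1,\dots,a_k-1,\dots,a_r}$. The only cosmetic difference is that the paper justifies the coset identification by an index computation and a commutative diagram, whereas you invoke $\mathbb{Z}_r^n\subseteq H$ directly; be careful in the write-up that the conjugate $t(x)^{-1}e_1 t(x)$ lands in coordinate $t(x)^{-1}(1)$ (not $t(x)(1)$), which lies in the standard block $J_k$ precisely when $1\in x_k$.
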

\begin{proof}
By considering the following inclusion
\begin{equation*}
S_n/S_{a_1} \times S_{a_2} \times\dots\times S_{a_r} \hooklongrightarrow G(n,r)/G(a_1,r) \times G(a_2,r)\times\dots\times G(a_r,r)
\end{equation*}
and by the following,
 \begin{align*}
 \left|\dfrac{G(n,r)}{G(a_1,r) \times G(a_2,r)\times\dots\times (a_r,r)}\right| & = \dfrac{r^n n!}{r^{a_1} {a_1}! r^{a_2} {a_2}!\dots r^{a_r} {a_r}!}\\
 & = \dfrac{n!}{{a_1}!  {a_2}!\dots {a_r}!}\\
 &=\left|\dfrac{S_n}{S_{a_1} \times S_{a_2} \times\dots\times S_{a_r}}\right|,
 \end{align*}
we may identify the quotient $$G(n,r)/{G(a_1,r) \times G(a_2,r)\times\dots\times G(a_r,r)}$$ with $${S_n/S_{a_1} \times S_{a_2} \times\dots\times S_{a_r}}$$ and we use a transversal of ${S_n/S_{a_1} \times S_{a_2} \times\dots\times S_{a_r}}\rightarrow S_n$ to form a transversal 
 \begin{equation*}
 t: G(n,r)/{G(a_1,r) \times G(a_2,r)\times\dots\times G(a_r,r)} \rightarrow G(n,r)
 \end{equation*}
 whose image lies in $S_n$.
 \begin{center}
\begin{tikzcd}
(S_n)_{ab} \arrow[r, "ver"]\arrow[d]& (S_{a_1})_{ab}\times (S_{a_2})_{ab} \times\dots\times (S_{a_r})_{ab}\arrow[d] 
\\(G(n,r))_{ab}\arrow[r, "ver" ]& (G(a_1,r))_{ab} \times (G(a_2,r))_{ab}\times\dots\times (G(a_r,r))_{ab}
\end{tikzcd}
 \end{center}
 This gives the formula for $ver (\tau_n) $.

In line with the notations from the previous proof , we have
 \begin{equation*}
{^{{t(x)}^{-1}}x  }= J.
 \end{equation*}
 Hence, ${^{{t(x)}^{-1}}}(1)\in J_{k}$ if and only if $1\in x_{k}$ for $1\leq k\leq r$ and
 \begin{equation*}
 h_{x,e_1} = {t(x)}^{-1} e_1 t(x) = e_k, \text{ where }{^{{t(x)}^{-1}}}(1)=k.
 \end{equation*}
 Therefore, modulo $D(H)$, one may write 
 \begin{equation*}
 h_{x,e_1} = {t(x)}^{-1} e_1 t(x) = e_k, \text{ if } 1\in x_{k} \text{ for }1\leq k\leq r,
 \end{equation*}
there are ${{n-1}\choose{{a_1},{a_2},\dots,{a_k-1},\dots,{a_r}}}$ elements of $\mathscr{P}(J_n) $ containing $1$ in its $k$th set. Hence,
\begin{equation*}
ver (e_n) = \Big(e_{a_1} ^ {{n-1}\choose{{a_1-1},{a_2},\dots,{a_r}}},e_{a_2} ^ {{n-1}\choose{{a_1},{a_2-1},\dots,{a_r}}},\dots,e_{a_r}^{{n-1}\choose{{a_1},{a_2},\dots,{a_r-1}}}\Big).
\end{equation*}
\end{proof}
For $i_1,i_2,\dots,i_r$ be the non-negative integers, consider the following
\begin{align*}
&det\Big(\rho_{\lambda_1}^0 (\tau_{a_1} ^{i_1}) \boxtimes \rho_{\lambda_2}^1 (\tau_{a_2} ^{i_2}) \boxtimes\dots \boxtimes  \rho_{\lambda_r}^{r-1} (\tau_{a_r}^{i_r})\Big)\\
&=  det\Big({\rho_{\lambda_1}^0 (\tau_{a_1}})^{i_1}\Big)^{f_{\lambda_2}f_{\lambda_3}\dots f_{\lambda_r}}\dots {\ }det\Big({\rho_{\lambda_r}^{r-1} (\tau_{a_r}} )^{i_r}\Big)^{f_{\lambda_1}f_{\lambda_2}\dots f_{\lambda_{r-1}}}\\
&=  det\Big({\rho_{\lambda_1}^0 (\tau_{a_1}})^{i_1}\Big)^{\hat{f}_{\lambda_1}}\dots {\ }det\Big({\rho_{\lambda_r}^{r-1} (\tau_{a_r}} )^{i_r}\Big)^{\hat{f}_{\lambda_r}}\\
&= {(-1)}^{g_{\lambda_1}\hat{f}_{\lambda_1}i_1 } {(-1)}^{g_{\lambda_2}\hat{f}_{\lambda_2}i_2} \dots {(-1)}^{g_{\lambda_r}\hat{f}_{\lambda_r}i_r}\\
&= {(-1)}^{\sum\limits_{k=1}^r{ g_{\lambda_k}\hat{f}_{\lambda_k}i_k}}
\end{align*}
where $\hat{f}_{\lambda_i}=f_{\lambda_1}f_{\lambda_2}\dots f_{\lambda_{i-1}}f_{\lambda_{i+1}}\dots f_{\lambda_r}.$

Hence,
\begin{equation*}
det\Big(\rho_{\lambda_1}^0  \boxtimes {\rho_{\lambda_2}}^1\boxtimes\dots\boxtimes  \rho_{\lambda_r}^{r-1} \Big)\Big(ver (s_1)\Big) = {(-1)}^{\sum\limits_{k=1}^r{ g_{\lambda_k}\hat{f}_{\lambda_k}{{{n-2}\choose{{a_1},\dots,{a_k-2},\dots,{a_r}}}}}}.
\end{equation*}
One can also compute,
\begin{align*}
&det\Big(\rho_{\lambda_1}^0 (e_{a_1} ^{i_1}) \boxtimes \rho_{\lambda_2}^1 (e_{a_2} ^{i_2}) \boxtimes \dots\boxtimes  \rho_{\lambda_r}^{r-1} (e_{a_r}^{i_r})\Big)\\
&=  det\Big({\rho_{\lambda_1}^0 (e_{a_1}})^{i_1}\Big)^{\hat{f}_{\lambda_1}} det\Big({\rho_{\lambda_2}^1 (e_{a_2}})^{i_2}\Big)^{\hat{f}_{\lambda_2}}\dots{\ }det\Big({\rho_{\lambda_r}^{r-1} (e_{a_r}} )^{i_r}\Big)^{\hat{f}_{\lambda_r}}\\
&=1. det\Big((\zeta {id}_{f_{\lambda_2}})^{i_2}\Big)^{\hat{f}_{\lambda_2}}\dots{\ } det\Big((\zeta^{r-1} {id}_{f_{\lambda_r}})^{i_r}\Big)^{\hat{f}_{\lambda_r}}\\
&= \zeta^{f_{\lambda_1}f_{\lambda_2}\dots f_{\lambda_r}i_2} (\zeta^2)^{f_{\lambda_1}f_{\lambda_2}\dots f_{\lambda_r}i_3}\dots (\zeta^{r-1})^{f_{\lambda_1}f_{\lambda_2}\dots f_{\lambda_r}i_r}\\
&= \zeta^{\Big(\sum\limits_{k=1}^{r-1} ki_{k+1}\Big)f_{\lambda_1}f_{\lambda_2}\dots f_{\lambda_r}}.
\end{align*}
Hence, 
\begin{equation*}
det\Big(\rho_{\lambda_1}^0  \boxtimes {\rho_{\lambda_2}}^1\boxtimes\dots \boxtimes  \rho_{\lambda_r}^{r-1} \Big)\Big(ver (e_n)\Big)=  \zeta^{\sum\limits_{k=1}^{r-1} k{{n-1}\choose{{a_1},\dots,{a_{k+1}-1},\dots,{a_r}}}f_{\lambda_1}f_{\lambda_2}\dots f_{\lambda_r}}. 
\end{equation*}
Since the permutation module $\mathbb{C}[G/H]$ coming from the action of $G$ on the set $\mathscr{P}(J_n) $  factors through the action of $S_n$,  $e_n$ acts trivially and $s_1$ acts by permuting the multisets. The number of orbits of $s_1$ on this set equals to the number of sets of $\rho_\lambda(J_n)$, which contain $1$ and $2$ in two different sets in the multiset, which equals  ${\dfrac{(n-2)!}{a_1!a_2!\dots a_r!}}\sum\limits_{i \neq j}{a_ia_j}$. This gives
\begin{align*}
det( R_{G/H}) (e_n)&=1   \text{ and}\\
det( R_{G/H}) (s_1)&=(-1)^{{\dfrac{(n-2)!}{a_1!a_2!\dots a_r!}} \sum\limits_{i \neq j}{a_ia_j}}
\end{align*}
so that 
\begin{equation*}
det( R_{G/H})=(sgn^o)^{{n-2}\choose{{a_1-1},{a_2-1},\dots,{a_r-1}}}.
\end{equation*}
For $\lambda=(\lambda_1, \lambda_2,\dots,\lambda_r)\in P(n,r)$ we define 
\begin{equation}\label{xlambda}
x_{\lambda} =f_{\lambda_1}f_{\lambda_2}\dots f_{\lambda_r} \sum\limits_{k=1}^{r-1} k{{n-1}\choose{{a_1},\dots,{a_{k+1}-1},\dots,{a_r}}} \in \mathbb{Z}_r 
\end{equation}
and
\begin{equation}\label{ylambda}
y_{\lambda} =f_{\lambda_1}f_{\lambda_2}\dots f_{\lambda_r} {\dfrac{(n-2)!}{a_1!a_2!\dots a_r!}}\sum\limits_{i \neq j}{a_ia_j} +\sum\limits_{k=1}^r{ g_{\lambda_k}\widehat{f}_{\lambda_k}{{n-2}\choose{{a_1},\dots,{a_k-2},\dots,{a_r}}}} \in \mathbb{Z}_2.
\end{equation}
We are now ready to describe the determinant of the irreducible representation $\rho_{\lambda}$ of $G(n,r)$ as follows:
\begin{theorem}\label{t4}
For a multipartition $\lambda \in P(n,r)$, we have
\begin{equation}
det \rho _{\lambda} = \zeta^{x_{\lambda}} sgn^{y_{\lambda}}.
\end{equation}
\end{theorem}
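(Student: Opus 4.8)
The plan is to apply the induced-representation determinant formula of Proposition \ref{p5} to $\rho_\lambda = \operatorname{Ind}_H^G \sigma$, where $G = G(n,r)$, $H = G(\lambda,r)$ is the generalized Young subgroup, and $\sigma = \boxtimes_{k=0}^{r-1}\rho_{\lambda_{k+1}}^k$ is the external tensor product of the extended representations, so that $\dim\sigma = f_{\lambda_1}\cdots f_{\lambda_r}$. This reduces the computation of $\det\rho_\lambda$ to two ingredients, both of which are assembled in the displayed computations preceding the statement: the determinant of the permutation module $R_{G/H} = \mathbb{C}[G/H]$ raised to the power $\dim\sigma$, and the transfer contribution $\det\sigma \circ \operatorname{ver}_{G/H}$, where $\operatorname{ver}_{G/H}$ is the map computed in Proposition \ref{p7}.

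The crucial simplification is that $\det\rho_\lambda$ is a multiplicative character, hence factors through the abelianization $G(n,r)_{ab}\cong \mathbb{Z}_2\times\mathbb{Z}_r$, which is generated by the images of the transposition $s_1=\tau_n$ and the vector $e_n$. It therefore suffices to evaluate $\det\rho_\lambda$ on these two generators and match it against the proposed character $\zeta^{x_\lambda}\operatorname{sgn}^{y_\lambda}$, which by definition sends $e_n\mapsto\zeta^{x_\lambda}$ (the $\operatorname{sgn}$ factor being trivial on $\mathbb{Z}_r^n$) and $s_1\mapsto(-1)^{y_\lambda}$ (the $\zeta^{x_\lambda}$ factor being trivial on the symmetric part).

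First I would evaluate on $e_n$. Since the action on $\mathscr{P}(J_n)$ factors through $S_n$, the element $e_n$ acts trivially on $R_{G/H}$, so the permutation factor contributes $1$ and $\det\rho_\lambda(e_n)=\det\sigma(\operatorname{ver}(e_n))$. Substituting the formula for $\operatorname{ver}(e_n)$ from Proposition \ref{p7} into the displayed computation of $\det\sigma$ on the $e$-generators, where each $\rho_{\lambda_j}^{j-1}(e_{a_j})=\zeta^{j-1}\operatorname{id}$, and reindexing $\sum_{j=1}^r (j-1)i_j=\sum_{k=1}^{r-1} k\,i_{k+1}$ with $i_{k+1}=\binom{n-1}{a_1,\dots,a_{k+1}-1,\dots,a_r}$, yields exactly $\zeta^{x_\lambda}$ as in (\ref{xlambda}). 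Next I would evaluate on $s_1$, where both factors contribute a sign: the permutation factor gives $\det(R_{G/H})^{\dim\sigma}(s_1)=(-1)^{f_{\lambda_1}\cdots f_{\lambda_r}\frac{(n-2)!}{a_1!\cdots a_r!}\sum_{i\neq j}a_ia_j}$, while the transfer factor gives $\det\sigma(\operatorname{ver}(s_1))=(-1)^{\sum_{k} g_{\lambda_k}\hat{f}_{\lambda_k}\binom{n-2}{a_1,\dots,a_k-2,\dots,a_r}}$, and their product is $(-1)^{y_\lambda}$ as in (\ref{ylambda}). Since the two characters agree on both generators of $G(n,r)_{ab}$, they coincide, which is the assertion.

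The step I expect to require the most care is the transfer computation on $s_1$, specifically the identity $\det\rho_{\lambda_k}(\tau)=(-1)^{g_{\lambda_k}}$ for a transposition $\tau\in S_{a_k}$; this is where [\cite{amri},Theorem 8] enters, through the fact that $\tau$ acts on the Specht module of $\lambda_k$ with precisely $g_{\lambda_k}$ eigenvalues equal to $-1$, together with the bookkeeping by which the external tensor product raises each local determinant to the complementary-dimension power $\hat{f}_{\lambda_k}$. One must also verify that the scalar factor $\zeta^{k-1}$ in $\rho_{\lambda_k}^{k-1}$ does not affect the $s_1$-computation, since it lives on the $\mathbb{Z}_r$ part while $\tau$ lies in the symmetric factor, and that the identification of $G(n,r)/H$ with $S_n/(S_{a_1}\times\cdots\times S_{a_r})$ used to build the transversal (whose image lies in $S_n$) is compatible with both Proposition \ref{p6} and Proposition \ref{p7}.
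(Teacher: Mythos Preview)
Your proposal is correct and follows essentially the same approach as the paper: the theorem is obtained by plugging the transfer computation of Proposition~\ref{p7} and the permutation-module determinant into the induced-determinant formula of Proposition~\ref{p5}, then reading off the values on the two generators $e_n$ and $s_1$ of $G(n,r)_{ab}$. The displayed calculations immediately preceding the theorem are precisely the ingredients you describe, and your organization of them into the evaluation on each generator matches the paper's derivation.
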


\section{characterization of the determinant in terms of $({a_1},\dots,{a_r}) $}
The determinant formula derived in the last section shows that the question of computing the determinant of an irreducible representation $\lambda$ of $G(n,r)$ relays only on computing the parity of $x_{\lambda}$ and $y_{\lambda}$. Although the determinant involves the multipartitions, one checks without difficulty that it merely uses the underlying composition $a$. In this section, we translate the question of characterizing the determinant of all multipartitions to that of questions about the characterization of the underlying composition irrespective of the parts of the given multipartition.
\begin{theorem}\label{tx}
Given any composition $a = (a_1,\dots,a_r)\models n$ with $r$ odd prime, such that the multinomial coefficients ${{n-1}\choose{{a_1},\dots,{a_{k}-1},\dots,{a_r}}}$ lie in the same residue class modulo $r$, for  $1\leq k\leq r$, then $x_\lambda=0{\ } \mod{\ } r$.
\end{theorem}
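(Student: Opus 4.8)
The plan is to reduce the statement to a short congruence computation on the sum appearing in the definition \eqref{xlambda} of $x_\lambda$. The only arithmetic content is the factor
\[
S \;=\; \sum_{k=1}^{r-1} k\binom{n-1}{a_1,\dots,a_{k+1}-1,\dots,a_r},
\]
since $x_\lambda = f_{\lambda_1}\cdots f_{\lambda_r}\cdot S$ and the product of dimensions $f_{\lambda_1}\cdots f_{\lambda_r}$ is an integer that cannot change a residue that is already $0$. So I would first observe that it suffices to prove $S\equiv 0\pmod r$.

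Next I would reindex to match the hypothesis. Writing $d_m = \binom{n-1}{a_1,\dots,a_m-1,\dots,a_r}$ for $1\le m\le r$, the summand indexed by $k$ is $k\,d_{k+1}$, so the substitution $m=k+1$ gives
\[
S \;=\; \sum_{m=2}^{r}(m-1)\,d_m .
\]
The hypothesis is exactly that the $r$ coefficients $d_1,d_2,\dots,d_r$ all lie in one residue class modulo $r$; call the common residue $d$, so that $d_m\equiv d\pmod r$ for every $m$.

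Then I would exploit this common residue. Reducing modulo $r$ and pulling the constant $d$ out of the sum,
\[
S \;\equiv\; d\sum_{m=2}^{r}(m-1) \;=\; d\sum_{k=1}^{r-1}k \;=\; d\,\frac{r(r-1)}{2}\pmod r .
\]
Because $r$ is odd, $(r-1)/2$ is an integer and $\tfrac{r(r-1)}{2}=r\cdot\tfrac{r-1}{2}$ is a multiple of $r$; hence $S\equiv 0\pmod r$, and therefore $x_\lambda\equiv 0\pmod r$, as claimed.

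There is no substantial obstacle here; the one point worth flagging is that the whole argument rests on the triangular number $\sum_{k=1}^{r-1}k = r(r-1)/2$ being divisible by $r$, which is precisely where the \emph{oddness} of $r$ enters (its primality is not needed for this particular step). The hypothesis that all $r$ coefficients $d_m$ share a residue class is used only to factor out the common $d$ before summing; without it the weighting by the indices $k$ would obstruct this collapse, so I would emphasize that step as the conceptual heart of the proof.
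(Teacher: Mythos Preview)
Your proposal is correct and follows essentially the same route as the paper: both arguments replace each multinomial coefficient in the sum by its common residue, factor it out, and invoke $\sum_{k=1}^{r-1}k=\tfrac{r(r-1)}{2}\equiv 0\pmod r$ for $r$ odd. Your write-up is more explicit (the reindexing, the remark that only oddness rather than primality of $r$ is used, and the observation that the integer factor $f_{\lambda_1}\cdots f_{\lambda_r}$ cannot disturb a zero residue), but the underlying idea is identical.
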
 
\begin{proof}
If all the multinomial coefficients ${{n-1}\choose{{a_1},\dots,{a_{k}-1},\dots,{a_r}}}$ lie in the same residue class modulo $r$, we have 
\begin{equation*}
\sum_{k=1}^{r-1}{k{{n-1}\choose{{a_1},\dots,{a_{k+1}-1},\dots,{a_r}}}}  =\dfrac{(r-1)r}{2} {{n-1}\choose{{a_1},\dots,{a_{k+1}-1},\dots,{a_r}}} {\ }mod{\ } r.
\end{equation*}
Therefore, $x_\lambda=0{\ } \mod{\ } r$.
\end{proof}
\begin{theorem}\label{ty}
If $\lambda\in P(n,r)$ with $({a_1},\dots,{a_r})$ being the underlying composition of $n$ and satisfies any of the following conditions:
\begin{enumerate}
\item there exists two $a_i ,a_j$, $i\neq j$ such that $ (bin(a_i)\setminus ord(a_i)) \cap (bin(a_j)\setminus ord(a_j)) \neq \phi $
\item there exists $3$ $a_k$'s which are  $2 {\ } mod {\ }4$ 
\item there exists $3$ $a_k$'s congruent to $3, 2, 1$ mod $4$ respectively 
\item atleast $4$ of the $a_k$'s are odd, 
\end{enumerate}
then $y_{\lambda}$ is even.
\end{theorem}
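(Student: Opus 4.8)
The plan is to reduce everything to a statement about the parity of the binomial/multinomial coefficients occurring in $y_\lambda$, and then, under each hypothesis, to exhibit a binary carry forcing each such coefficient to be even. First I would put the two summands of (\ref{ylambda}) on the same footing. Since
\begin{equation*}
\frac{(n-2)!}{a_1!\cdots a_r!}\,a_i a_j=\binom{n-2}{a_1,\dots,a_i-1,\dots,a_j-1,\dots,a_r}\qquad(i\neq j,\ a_i,a_j\ge1),
\end{equation*}
the first summand of $y_\lambda$ is $f_{\lambda_1}\cdots f_{\lambda_r}$ times a sum of the \emph{pair} coefficients $\binom{n-2}{\dots,a_i-1,\dots,a_j-1,\dots}$, while the second summand is an integer combination of the \emph{single} coefficients $\binom{n-2}{\dots,a_k-2,\dots}$. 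Thus it suffices to show that, under each of (1)--(4), every pair coefficient and every single coefficient is even; since these coefficients depend only on the composition $(a_1,\dots,a_r)$, this makes $y_\lambda$ even for all choices of $\lambda_k\vdash a_k$ at once. By the multinomial form of Lucas' theorem (Theorem \ref{t1}, Lemma \ref{l1}), such a coefficient is odd precisely when the base-$2$ supports of its parts are pairwise disjoint, i.e.\ when the parts add with no carry, so in each case I must produce a carry.

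The technical lemma I would record first describes how the two perturbations act on binary digits. Decrementing a part by $1$ changes only its digits in positions $\le\mathrm{ord}(a)$: it clears the lowest set bit and fills the positions below it, leaving every higher digit fixed. Decrementing by $2$ changes only the digits from position $1$ up to the second-lowest set bit, with the crucial feature that when a borrow clears a set bit in position $p$ it simultaneously sets \emph{all} positions from $1$ to $p-1$. These two facts let me control exactly which set bits survive the $-1$'s (for the pair coefficients) and the $-2$'s (for the single coefficients).

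For condition (4) I would use bit $0$: decrementing two parts by $1$ changes the number of odd parts by at most two, so if at least four parts are odd then every pair coefficient still has $\ge2$ parts with bit $0$ set, and a single coefficient has exactly as many odd parts as $(a_1,\dots,a_r)$, again $\ge4$. In both situations bit $0$ carries. Conditions (2) and (3) are the same argument one bit higher: three parts $\equiv2\pmod4$ all carry bit $1$, and three parts $\equiv3,2,1\pmod4$ have two agreeing on bit $0$ (the $\equiv3,\equiv1$ parts) and two agreeing on bit $1$ (the $\equiv3,\equiv2$ parts). Since a single coefficient perturbs only one part and a pair coefficient only two, the digit lemma guarantees that at least one such coincidence survives every perturbation, forcing a carry at bit $0$ or bit $1$; the thresholds ``three'' and ``four'' are exactly what ensure enough distinguished parts remain untouched.

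For condition (1) let $p$ be the shared bit, lying strictly above $\mathrm{ord}(a_s)$ and $\mathrm{ord}(a_t)$. By the digit lemma a decrement by $1$ never reaches position $p$, so in every pair coefficient both $a_s$ and $a_t$ keep bit $p$ and a carry occurs there. For a single coefficient $\binom{n-2}{\dots,a_k-2,\dots}$, bit $p$ is untouched unless $k\in\{s,t\}$ and the borrow from $-2$ reaches $p$; but in that event the same borrow sets every position from $1$ to $p-1$, which then overlaps the lowest set bit of the other shared part (a position $<p$), or bit $0$ if that part is odd, so a carry reappears lower down. Either way every coefficient is even, and $y_\lambda$ is even in all four cases. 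The main obstacle is precisely this last borrow analysis: the slogan ``the carrying bit persists'' fails for the $-2$ operation in case (1) and must be replaced by the observation that clearing the high bit by a borrow necessarily manufactures an overlapping carry further down.
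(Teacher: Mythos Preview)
Your proposal is correct and follows essentially the same route as the paper: both arguments rewrite the first summand of $y_\lambda$ as a sum of ``pair'' multinomials $\binom{n-2}{\dots,a_i-1,\dots,a_j-1,\dots}$, observe that the second summand is a combination of the ``single'' multinomials $\binom{n-2}{\dots,a_k-2,\dots}$, and then use Lucas/Kummer parity to show every such coefficient is even under each hypothesis. The only difference is organizational: the paper factors each multinomial into a telescoping product of binomials and points to one even factor in each case, whereas you invoke the multinomial form of Lucas directly (odd $\Leftrightarrow$ carry-free base-$2$ addition) together with your ``digit lemma'' describing how $-1$ and $-2$ perturb the binary support; this packages the case analysis more uniformly, and in particular your borrow argument for condition~(1) (if clearing bit $p$ by the $-2$ operation, then bits $0,\dots,p-1$ are all set and must overlap $\mathrm{ord}(a_t)<p$) is exactly the content behind the paper's list of even binomial factors.
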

\begin{proof}
We prove by showing that ${{n-2}\choose{{a_1},\dots,{a_i -1},\dots,{a_j -1},\dots,{a_r}} }$ and ${{n-2}\choose{{a_1},\dots,{a_k -2},\dots,{a_r}} }$ are even for $1\leq i \neq j,\,k \leq r$. Observe that, ${{n-2}\choose{{a_1},\dots,{a_i -1},\dots,{a_j -1},\dots,{a_r}} }$ can be written as the product of any of the following binomial coefficients:
\begin{center}
${{a_1+a_2-1}\choose{{a_1-1},{a_2}} }$ or ${{a_1+a_2-2}\choose{{a_1-1},{a_2-1}} }$ or ${{a_1+a_2}\choose{{a_1},{a_2}} }$ or ${{a_1+a_2+a_3-2}\choose{{a_1+a_2-1},{a_3}} }$ or ${{a_1+a_2+a_3-2}\choose{{a_1+a_2-1},{a_3-1}} }$
\end{center}
Similarly, ${{n-2}\choose{{a_1},\dots,{a_k -2},\dots,{a_r}} }$ can be wriiten as a product of ${{a_1+a_2-2}\choose{{a_1-2},{a_2}} }$ or ${{a_1+a_2}\choose{{a_1},{a_2}} }$. 

As an immediate consequence of using Lemma \ref{l1}, the following holds
\begin{enumerate}
\item ${{a_1+a_2-1}\choose{{a_1-1},{a_2}} }$, ${{a_1+a_2-2}\choose{{a_1-1},{a_2-1}} }$, ${{a_1+a_2}\choose{{a_1},{a_2}} }$ and ${{a_1+a_2-2}\choose{{a_1-2},{a_2}} }$ are even if ${a_1},{a_2}$ have a $1$ in the same position, after their first non-zero digit from the right, in their binary expansion
\item ${{a_1+a_2-2}\choose{{a_1-1},{a_2-1}} }$, ${{a_1+a_2}\choose{{a_1},{a_2}} }$ are even if  both ${a_1}$ and ${a_2}$ are congruent to $2 {\ } mod {\ }4$ 
\item ${{a_1+a_2+a_3-2}\choose{{a_1+a_2-1},{a_3}} }$, ${{a_1+a_2+a_3-2}\choose{{a_1+a_2-1},{a_3-1}} }$, ${{a_1+a_2+a_3-2}\choose{{a_1-2},{a_2+a_3-1}} }$, ${{a_1+a_3}\choose{{a_1},{a_3}} }$ and ${{a_1+a_2}\choose{{a_1},{a_2}} }$ are even if $a_1,a_2,a_3$ are congruent to $3, 2, 1$ respectively modulo $4$
\item  ${{a_1+a_2-2}\choose{{a_1-2},{a_2}} }$ and ${{a_1+a_2}\choose{{a_1},{a_2}} }$  are even if both ${a_1},{a_2}$ are odd
\end{enumerate}
 and hence the theorem follows.
\end{proof}
The above two theorems give a characterization of the determinant of $\lambda$ in terms of $({a_1},\dots,{a_r}) $, so that the corresponding irreducible representation has the determinant equal to $\pm 1$ and $\zeta ^s$ for $1\leq s \leq r$ respectively. Finally, by combining the different pieces of above results, we have the following table that gives the possible values of the determinant of some special compositions of $n$ upto the permutation of its parts.
\begin{table}[H]
\caption{Determinant of some special compositions of $n$}
\label{table1}
\resizebox{\textwidth}{!}{
\begin{tabular}{|c|c|c|}
\hline
{\ } & $\mathbf{(a_1,\dots,a_r) \in C(n,r)}$ & \textbf{Possible values of the determinant} \\ \hline
1 & $a_i =a_j\geq 2 $ , $1\leq i,j \leq r$, $r$ odd prime        & 1          \\ \hline
2 & $a_i =a_j {\ } mod {\ } r$, $1\leq i,j \leq r$, $r$ odd prime    & $1,{\ }-1$    \\ \hline
3 & $a_i =0 \mod r$ for some $i$ and $a_k = s \mod r$, $s > \lceil{\frac{r}{2}}\rceil$ for all $k \neq i$, $r$ odd prime      & $1,{\ }-1$                     \\ \hline
4 & $a_i =a_j >2$, for some $i \neq j$        & $\zeta^s$,   $1\leq s \leq r$      \\ \hline
5 & any 4 are odd        & $\zeta^s$,   $1\leq s \leq r$                \\ \hline
6 & any 3 are congruent to $2 {\ } mod {\ }4$       & $\zeta^s$,   $1\leq s \leq r$       \\ \hline
7 & any 2 are congruent to $3 {\ } mod {\ }4$       & $\zeta^s$,   $1\leq s \leq r$       \\ \hline
8 & there exists $3$ $a_k$'s congruent to $3, 2, 1$ mod $4$ respectively 
   & $\zeta^s$,   $1\leq s \leq r$       \\ \hline

\end{tabular}}
\end{table}
\section{Inequalities}
In this section, we present few results on $x_{\lambda}$ and $y_{\lambda}$ which has many interesting consequences in proving the main result of this paper. 
To begin with, for given $\lambda\in P(n,r)$, we have an action of $S_r$ on it by permuting the parts in the multipartition, that is, $\pi(\lambda)= (\lambda_{\pi(1)},\lambda_{\pi(2)},\dots,\lambda_{\pi(r)})$. Let we denote $\sigma( \lambda)=(\lambda_1',\lambda_2',\dots,\lambda_r')$ where $\lambda_k'$ denotes the conjugate of the partition $\lambda_k$. We observe some properties of $x_{\lambda}$ and $y_{\lambda}$ in the next few propositions.
\begin{proposition}\label{p9}
For $\lambda = (\lambda_1,\lambda_2,\dots,\lambda_r)\in P(n,r)$ with $\lambda_k \vdash a_k$ and $1\leq k\leq r$, we have
\begin{enumerate}
\item $x_{\sigma(\lambda)} = x_{\lambda}$
\item $y_{\sigma(\lambda)} = y_{\lambda} + f_{\lambda}{\ } mod {\ } 2.$
\end{enumerate}
\end{proposition}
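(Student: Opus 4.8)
The plan is to exploit the fact that conjugating every component partition leaves all dimensions and the underlying composition untouched, and then to track the one quantity that does change, namely $g_{\lambda_k}$. I would organize the argument around the invariances of $x_\lambda$ and of the first summand of $y_\lambda$, isolating the entire discrepancy into the $g$-term of $y_\lambda$.

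First I would record the two invariances. Since a Specht module and its conjugate differ only by a twist by the sign character, $f_{\lambda_k'}=f_{\lambda_k}$; moreover $|\lambda_k'|=|\lambda_k|=a_k$, so the composition $(a_1,\dots,a_r)$ underlying $\sigma(\lambda)$ coincides with that of $\lambda$. Consequently $\widehat{f}_{\lambda_k}$ and every multinomial coefficient appearing in \eqref{xlambda} and \eqref{ylambda} are unchanged by $\sigma$. Because $x_\lambda$ is assembled solely from the $f_{\lambda_k}$ and these multinomial coefficients, part (1) is immediate: $x_{\sigma(\lambda)}=x_\lambda$. The same observation shows that the first summand of $y_\lambda$ is fixed by $\sigma$, so the whole difference $y_{\sigma(\lambda)}-y_\lambda$ lives in the second summand.

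Next I would compute the transformation law for $g_{\lambda_k}$. Twisting by the sign gives $\chi_{\lambda_k'}(s_1)=-\chi_{\lambda_k}(s_1)$, and combined with $f_{\lambda_k'}=f_{\lambda_k}$ this yields $g_{\lambda_k'}+g_{\lambda_k}=f_{\lambda_k}$, that is $g_{\lambda_k'}=f_{\lambda_k}-g_{\lambda_k}$, so modulo $2$ we have $g_{\lambda_k'}-g_{\lambda_k}\equiv f_{\lambda_k}$. Substituting into \eqref{ylambda} and using $f_{\lambda_k}\widehat{f}_{\lambda_k}=f_{\lambda_1}\cdots f_{\lambda_r}$, I obtain
\[
y_{\sigma(\lambda)}-y_\lambda \equiv f_{\lambda_1}\cdots f_{\lambda_r}\sum_{k=1}^{r}\binom{n-2}{a_1,\dots,a_k-2,\dots,a_r}\pmod 2 .
\]

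The last and only delicate step is the congruence $\sum_{k=1}^{r}\binom{n-2}{a_1,\dots,a_k-2,\dots,a_r}\equiv\binom{n}{a_1,\dots,a_r}\pmod 2$, which I expect to be the main obstacle. I would prove it by applying the multinomial Pascal recursion $\binom{m}{b_1,\dots,b_r}=\sum_j\binom{m-1}{b_1,\dots,b_j-1,\dots,b_r}$ twice to $\binom{n}{a_1,\dots,a_r}$; this expands it as a double sum over ordered pairs $(k,l)$, whose diagonal $k=l$ contributes exactly $\sum_k\binom{n-2}{a_1,\dots,a_k-2,\dots,a_r}$, while the off-diagonal terms pair up as $(k,l)$ and $(l,k)$ into equal contributions that cancel modulo $2$. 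Combining this with the dimension formula $f_\lambda=f_{\lambda_1}\cdots f_{\lambda_r}\binom{n}{a_1,\dots,a_r}$ of \eqref{dimV} gives $y_{\sigma(\lambda)}-y_\lambda\equiv f_\lambda\pmod 2$, which is part (2). Everything apart from this pairing argument is routine bookkeeping in the conjugation invariances established above.
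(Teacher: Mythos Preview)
Your argument is correct and follows the same overall architecture as the paper: both proofs first note that conjugation preserves the $f_{\lambda_k}$ and the underlying composition (giving part (1) and the invariance of the first summand of $y_\lambda$), then use $g_{\lambda_k'}=f_{\lambda_k}-g_{\lambda_k}$ to reduce part (2) to the congruence
\[
f_{\lambda_1}\cdots f_{\lambda_r}\sum_{k=1}^{r}\binom{n-2}{a_1,\dots,a_k-2,\dots,a_r}\equiv f_\lambda\pmod 2.
\]
The only genuine difference is how this congruence is established. The paper proceeds algebraically: it writes $\binom{n}{a_1,\dots,a_r}=\binom{n-2}{a_1-1,a_2-1,a_3,\dots,a_r}\cdot\dfrac{n(n-1)}{a_1a_2}$, expands $n(n-1)=\sum_k a_k(a_k-1)+2\sum_{i\neq j}a_ia_j$, drops the even cross term, and recombines to get $\sum_k\binom{n-2}{a_1,\dots,a_k-2,\dots,a_r}$. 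Your route---iterating the multinomial Pascal identity twice and pairing the off-diagonal terms $(k,l)$ with $(l,k)$---is a cleaner combinatorial substitute that avoids the somewhat awkward rational manipulation and makes the parity cancellation transparent. Either method yields the same conclusion, so nothing is missing from your proof.
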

\begin{proof}
A straightforward verification shows that the Part 1 of the proposition follows directly from the definition of $x_{\lambda}$.
By Lemma 3 of \cite{spallone}, we have
\begin{align*}
y_{\sigma(\lambda)} &=f_{\lambda_1}\dots f_{\lambda_r} {\dfrac{(n-2)!}{a_1!\dots a_r!}}\sum\limits_{i \neq j}{a_ia_j} +\sum\limits_{k=1}^r{ (g_{\lambda_k}+f_{\lambda_k})\widehat{f}_{\lambda_k}{{n-2}\choose{{a_1},\dots,{a_k-2},\dots,{a_r}}}}\\
&= y_{\lambda} + f_{\lambda_1}\dots f_{\lambda_r}\sum\limits_{k=1}^r {{n-2}\choose{{a_1},\dots,{a_k-2},\dots,{a_r}}}.
\end{align*}
Now,
\begin{align*}
f_{\lambda} &= f_{\lambda_1}\dots f_{\lambda_r}  {n \choose {a_1,\dots,a_r}}\\
&= f_{\lambda_1}\dots f_{\lambda_r} {{n-2}\choose{{a_1-1},{a_2-1},a_3,\dots,a_r}}\left(\dfrac{n(n-1)}{a_1a_2}\right)\\
&= f_{\lambda_1}\dots f_{\lambda_r} {{n-2}\choose{{a_1-1},{a_2-1},a_3,\dots,a_r}}\left(\dfrac{(a_1+\dots+a_r)^2-(a_1+\dots +a_2)}{a_1a_2}\right)\\
&= f_{\lambda_1}\dots f_{\lambda_r} {{n-2}\choose{{a_1-1},{a_2-1},a_3,\dots ,a_r}}\left(\dfrac{({a_1}^2+\dots+{a_r}^2)-(a_1+\dots +a_2)}{a_1a_2} +2\dfrac{\sum\limits_{i \neq j}{a_ia_j}}{a_1a_2}\right)\\
&=  f_{\lambda_1}\dots f_{\lambda_r} {{n-2}\choose{{a_1-1},{a_2-1},a_3,\dots,a_r}}\left(\dfrac{({a_1}^2+\dots+{a_r}^2)-(a_1+\dots+a_2)}{a_1a_2} \right) \mod 2\\
&=  f_{\lambda_1}\dots f_{\lambda_r} {{n-2}\choose{{a_1-1},{a_2-1},a_3,\dots,a_r}}
\left(\dfrac{{a_1}(a_1-1)+\dots+{a_r}(a_r-1))}{a_1a_2} \right) \mod 2\\
&= f_{\lambda_1}\dots f_{\lambda_r} \sum\limits_{k=1}^r{{n-2}\choose{{a_1},\dots,{a_k-2},\dots,{a_r}}} \mod 2.
\end{align*}
\end{proof}
The next proposition discusses the action of $S_r$ on the terms $x_{\lambda}$ and $y_{\lambda}$. In particular, $y_{\lambda}$ is invariant under the action of $S_r$.
\begin{proposition} \label{p10}
Suppose $\pi$ is a transposition $(i,i+1)$, then
\begin{enumerate}
\item $x_{\pi(\lambda)} = x_{\lambda} +\dfrac{(a_i-a_{i+1})}{n}f_{\lambda}$
\item $y_{\pi(\lambda)} = y_{\lambda}. $
\end{enumerate}
\end{proposition}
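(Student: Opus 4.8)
The plan is to treat the two parts separately, regarding each claimed identity as an equality of integers that then descends to $\mathbb{Z}_r$ (for Part 1) or $\mathbb{Z}_2$ (for Part 2). Throughout I write $F=f_{\lambda_1}\cdots f_{\lambda_r}$, which is manifestly invariant under any permutation of the parts, and $M=\binom{n}{a_1,\dots,a_r}$, which is symmetric in the $a_k$.

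For Part 1, the first step is to re-index the sum defining $x_\lambda$. Putting $m=k+1$ rewrites $x_\lambda = F\sum_{k=1}^{r-1}k\binom{n-1}{a_1,\dots,a_{k+1}-1,\dots,a_r}$ as $x_\lambda = F\sum_{m=1}^{r}(m-1)\,b_m$, where $b_m:=\binom{n-1}{a_1,\dots,a_m-1,\dots,a_r}$ and the $m=1$ term vanishes. The crucial observation is the elementary identity $b_m=\frac{a_m}{n}M$, which follows from $a_m\cdot\frac{(n-1)!}{a_1!\cdots a_r!}=\frac{(n-1)!}{a_1!\cdots(a_m-1)!\cdots a_r!}$. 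This shows that the weights $b_m$ are proportional to $a_m$, and in particular that $\frac{a_i-a_{i+1}}{n}f_\lambda = F(b_i-b_{i+1})$ is a genuine integer, so the statement makes sense in $\mathbb{Z}_r$.

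Next I would track the effect of $\pi=(i,i+1)$. Since $M$ is symmetric, the swapped composition $a'$ satisfies $b'_m=\frac{a'_m}{n}M$, so $b'_i=b_{i+1}$, $b'_{i+1}=b_i$, and $b'_m=b_m$ otherwise. Hence $x_{\pi(\lambda)}-x_\lambda = F\sum_{m=1}^{r}(m-1)(b'_m-b_m)$ collapses to the two surviving terms $m=i$ and $m=i+1$, giving $(i-1)(b_{i+1}-b_i)+i(b_i-b_{i+1})=b_i-b_{i+1}$. Multiplying by $F$ and using $b_i-b_{i+1}=\frac{a_i-a_{i+1}}{n}M$ together with $f_\lambda=FM$ yields exactly $x_{\pi(\lambda)}=x_\lambda+\frac{a_i-a_{i+1}}{n}f_\lambda$. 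For Part 2 I would argue that both summands of $y_\lambda$ are $S_r$-invariant: the first, $F\cdot\frac{(n-2)!}{a_1!\cdots a_r!}\sum_{i\neq j}a_ia_j$, is symmetric because every factor is a symmetric function of the $a_k$, while in the second, $\sum_{k=1}^{r}g_{\lambda_k}\widehat{f}_{\lambda_k}\binom{n-2}{a_1,\dots,a_k-2,\dots,a_r}$, the $k$-th summand depends only on the $k$-th part and the unordered multiset of the remaining parts, so relabeling by $\pi$ carries the $k$-th summand to the $\pi(k)$-th and leaves the total sum unchanged, proving $y_{\pi(\lambda)}=y_\lambda$.

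The one point needing care — and the closest thing to an obstacle — is that the weight $k$ in the definition of $x_\lambda$ destroys the naive symmetry, so $x_\lambda$ is genuinely not $S_r$-invariant. The identity $b_m=\frac{a_m}{n}M$ is precisely what lets the asymmetric weights telescope, via $(i-1)-i=-1$, into the clean shift $\frac{a_i-a_{i+1}}{n}f_\lambda$; one must also verify that this rational-looking expression is an integer, which is exactly what the same identity provides.
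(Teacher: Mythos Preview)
Your proof is correct and follows essentially the same route as the paper: you rewrite $x_\lambda$ using $b_m=\binom{n-1}{a_1,\dots,a_m-1,\dots,a_r}=\frac{a_m}{n}M$, which is exactly the paper's observation $x_\lambda=F\cdot\frac{(n-1)!}{a_1!\cdots a_r!}\,[a_2+2a_3+\cdots+(r-1)a_r]$, and then the swap produces the single surviving term $F\frac{(n-1)!}{a_1!\cdots a_r!}(a_i-a_{i+1})=\frac{a_i-a_{i+1}}{n}f_\lambda$; for Part~2 both you and the paper simply note that each summand of $y_\lambda$ is symmetric in the parts. Your additional remark that $\frac{a_i-a_{i+1}}{n}f_\lambda$ is a genuine integer is a nice point the paper leaves implicit.
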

\begin{proof}
Observe that, 
\begin{equation*}
x_{\lambda} = f_{\lambda_1}f_{\lambda_2}\dots f_{\lambda_r}\dfrac{(n-1)!}{a_1!\dots a_r!}[a_2 + 2a_3+\dots+(r-1)a_r].
\end{equation*}
Then
\begin{equation*}
x_{\pi(\lambda)} = f_{\lambda_1}f_{\lambda_2}\dots f_{\lambda_r}\dfrac{(n-1)!}{a_1!\dots a_r!}[a_2 + 2a_3+\dots+(i-1)a_{i+1}+ia_i+\dots+(r-1)a_r]
\end{equation*}
which implies that
\begin{equation*}
x_{\pi(\lambda)} = x_{\lambda} +f_{\lambda_1}f_{\lambda_2}\dots f_{\lambda_r}\dfrac{(n-1)!}{a_1!\dots a_r!}(a_i-a_{i+1} ).
\end{equation*}
One checks easily that the Part (2) of the proposition follows from the fact that both the terms in $ y_{\lambda}$ are invariant under the action of $S_r$.
\end{proof}
As a consequence of the above result, we have the following:
\begin{corollary}\label{c1}
For any $\lambda \in P(n,r)$ and $\pi \in S_r$, we have
\begin{enumerate}
\item $x_{\pi(\lambda)} = x_{\lambda} +\dfrac{\sum_{j=1}^{k}(a_{i_j}-a_{i_j+1} )}{n}f_{\lambda}$, where $k$ is the length of the reduced expression of $\pi = s_{i_1}\dots s_{i_k}$, $s_{i_j} = (i_j,i_{j+1})$ and $i_1,\dots ,i_k \in \{ 1,\dots,r\}$
\item $y_{\pi(\lambda)} = y_{\lambda}. $
\end{enumerate}
\end{corollary}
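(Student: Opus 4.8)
The plan is to derive Corollary \ref{c1} as a direct consequence of Proposition \ref{p10} by iterating the single-transposition formula along a reduced expression for $\pi$. Since every $\pi \in S_r$ can be written as a product of adjacent transpositions $\pi = s_{i_1}\cdots s_{i_k}$, the natural approach is induction on the length $k$ of such an expression, using Proposition \ref{p10} as the base case ($k=1$).

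For Part (2), the argument is immediate: Proposition \ref{p10} shows $y_{\pi(\lambda)} = y_{\lambda}$ for a single adjacent transposition, and since $y$ is unchanged by each generator in the product, it is unchanged by $\pi$; equivalently, both summands defining $y_{\lambda}$ in equation (\ref{ylambda}) are symmetric functions of $(a_1,\dots,a_r)$ (the $\sum_{i\neq j} a_i a_j$ term and the sum $\sum_k g_{\lambda_k}\widehat{f}_{\lambda_k}\binom{n-2}{\dots a_k-2 \dots}$ are both fully symmetric in the indices), so $y_{\pi(\lambda)} = y_{\lambda}$ for all $\pi \in S_r$. I would state this first since it requires no computation.

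For Part (1), I would set up the induction on $k$. Writing $\pi = s_{i_1}(s_{i_2}\cdots s_{i_k})$ and letting $\pi' = s_{i_2}\cdots s_{i_k}$, the inductive hypothesis gives $x_{\pi'(\lambda)} = x_{\lambda} + \frac{1}{n}\big(\sum_{j=2}^{k}(a_{i_j}-a_{i_j+1})\big)f_{\lambda}$. Applying Proposition \ref{p10} to the outermost transposition $s_{i_1}$ acting on $\pi'(\lambda)$ contributes the additional term $\frac{1}{n}(a_{i_1}-a_{i_1+1})f_{\pi'(\lambda)}$, and telescoping these contributions yields the stated sum $\frac{\sum_{j=1}^{k}(a_{i_j}-a_{i_j+1})}{n}f_{\lambda}$. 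Here I would note that although the parts get permuted at each stage, the dimension $f_{\lambda} = f_{\lambda_1}\cdots f_{\lambda_r}\binom{n}{a_1,\dots,a_r}$ is itself symmetric under $S_r$, so $f_{\pi'(\lambda)} = f_{\lambda}$ and each correction term may be evaluated against the common factor $f_{\lambda}$.

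The main obstacle is bookkeeping rather than mathematics: one must verify that the indices $i_j$ in the claimed formula track the positions correctly as successive transpositions reorder the parts, so that the accumulated correction genuinely telescopes to $\sum_{j=1}^{k}(a_{i_j}-a_{i_j+1})$ with the $a$'s referring to the original (unpermuted) composition. Since $f_{\lambda}$ is $S_r$-invariant it factors out cleanly and does not interfere, but care is needed to confirm that the formula is read with respect to a fixed reduced word and the original labelling; once this indexing convention is pinned down, the telescoping sum and hence the corollary follow immediately.
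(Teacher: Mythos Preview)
Your approach matches the paper's: the corollary is stated there without proof, merely ``as a consequence of the above result,'' meaning precisely the iteration of Proposition~\ref{p10} along a reduced word that you describe. Your observation that $f_\lambda$ is $S_r$-invariant (so $f_{\pi'(\lambda)}=f_\lambda$ at every step) and that both summands of $y_\lambda$ are symmetric in the indices is exactly what makes the iteration go through, and the bookkeeping caveat you raise about which composition the $a_{i_j}$ refer to at each stage is a genuine ambiguity in the paper's own statement rather than a defect in your argument.
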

For the rest of this section unless otherwise stated, we may assume that $r$ is an odd prime.
\begin{lemma}\label{l2}
For a given $a=(a_1,\dots,a_r)\models n$ with all the multinomial coefficients, ${{n-1}\choose{{a_1},\dots,{a_{k}-1},\dots,{a_r}}}$ not from the same residue class modulo $r$, there exists an ordering of $r$ numbers $a_1,\dots,a_r$ such that
\begin{equation*}
\sum\limits_{k=1}^{r-1} k{{n-1}\choose{{a_1},\dots,{a_{k+1}-1},\dots,{a_r}}} \neq 0 \mod r.
\end{equation*} 
\begin{proof}
Pick the largest possible set of integers  $a_k$ from the given composition $a$ such that the multinomial coefficients lies in distinct residue class modulo $r$. Place these elements in the $i$th position so that, $ (i-1){{n-1}\choose{{a_1},\dots,{a_{i}-1},\dots,{a_r}}} = 1 {\ }mod{\ } r$. Place the remaining elements in the left over positions randomly. If $\sum\limits_{k=1}^{r-1} k{{n-1}\choose{{a_1},\dots,{a_{k+1}-1},\dots,{a_r}}} = 0 \mod r$ replace the element in the $1$st position with any of the element placed in the first step.
\end{proof}
\end{lemma}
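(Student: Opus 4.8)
The plan is to recast the statement as a purely combinatorial problem about residues modulo $r$ and then argue by contradiction rather than by explicit construction. First I would record the effect of reordering the parts on the quantity in question. Writing $M_j = {{n-1}\choose{{a_1},\dots,{a_{j}-1},\dots,{a_r}}}$ for $1\le j\le r$, the coefficient obtained after applying a permutation $\pi\in S_r$ to the parts and then decrementing the $k$th part is exactly $M_{\pi(k)}$, since $\tfrac{(n-1)!}{a_1!\cdots a_r!}$ is symmetric in the parts and $M_j = a_j\tfrac{(n-1)!}{a_1!\cdots a_r!}$. Hence the displayed sum, evaluated on the reordered composition, equals
\begin{equation*}
S_\pi \;=\; \sum_{k=1}^{r-1} k\,M_{\pi(k+1)} \;=\; \sum_{k=1}^{r}(k-1)\,M_{\pi(k)},
\end{equation*}
so the task becomes: given that the residues $M_1,\dots,M_r$ are \emph{not} all equal modulo $r$ (which is precisely the hypothesis that they do not all lie in one residue class), exhibit a permutation $\pi$ with $S_\pi\not\equiv 0\bmod r$.

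Next I would argue by contradiction: suppose $S_\pi\equiv 0\bmod r$ for every $\pi\in S_r$. The key computation compares $S_\pi$ with $S_{\pi'}$, where $\pi'$ interchanges the values assigned to two positions $i\ne j$. Every term except those indexed by $i$ and $j$ cancels, and a direct calculation gives
\begin{equation*}
S_{\pi'}-S_\pi \;=\; (i-j)\bigl(M_{\pi(j)}-M_{\pi(i)}\bigr).
\end{equation*}
If both $S_\pi$ and $S_{\pi'}$ vanish modulo $r$, then $(i-j)\bigl(M_{\pi(j)}-M_{\pi(i)}\bigr)\equiv 0\bmod r$. Because $r$ is an odd prime and $1\le i,j\le r$ with $i\ne j$, the difference $i-j$ is a nonzero residue and hence invertible modulo $r$; therefore $M_{\pi(i)}\equiv M_{\pi(j)}\bmod r$.

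Taking $\pi$ to be the identity and letting $i,j$ range over all pairs of distinct indices would then force $M_i\equiv M_j\bmod r$ for all $i,j$, i.e.\ that all the multinomial coefficients lie in a single residue class. This contradicts the hypothesis, so some $\pi$ must satisfy $S_\pi\not\equiv 0\bmod r$, and that $\pi$ furnishes the required ordering.

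I expect the only genuine subtlety to be the bookkeeping in the swap computation and, above all, the use of primality: cancelling the factor $i-j$ is exactly the step where the assumption that $r$ is prime is indispensable, since for composite $r$ the quantity $i-j$ could be a zero divisor and the deduction $M_{\pi(i)}\equiv M_{\pi(j)}$ would fail. This contradiction route is shorter and more transparent than building a good ordering by hand; if one preferred the explicit constructive version, the hard part would instead be justifying a greedy placement rule that provably steers the weighted sum away from the single residue $0$, and the argument above can be read as the conceptual reason such a rule must succeed.
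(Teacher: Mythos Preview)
Your argument is correct. The bookkeeping is right: after permuting the parts by $\pi$, the $k$th multinomial coefficient really is $M_{\pi(k)}$, the swap identity $S_{\pi'}-S_\pi=(i-j)(M_{\pi(j)}-M_{\pi(i)})$ holds as written, and the cancellation of $i-j$ uses exactly the standing hypothesis that $r$ is prime. Nothing is missing.

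The route, however, is genuinely different from the paper's. The paper gives a constructive argument: it isolates a maximal subset of the $a_k$ whose associated multinomials lie in distinct residue classes, places each such $a_k$ in the position $i$ determined by $(i-1)M_k\equiv 1\bmod r$, fills the remaining slots arbitrarily, and if the resulting sum happens to vanish, performs a single swap with the first position to fix it. Your proof instead assumes every ordering gives $S_\pi\equiv 0$ and deduces, via the transposition difference, that all the $M_j$ coincide modulo $r$. What your approach buys is brevity and transparency: the only nontrivial step is the invertibility of $i-j$, and one never has to verify that a greedy placement actually succeeds. What the paper's approach buys is an explicit ordering, which feeds directly into the subsequent Proposition where one starts from a concrete arrangement with sum $\equiv 1$ and then scales; your existence statement suffices for that purpose too, but the construction makes the passage slightly more concrete.
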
  
\begin{proposition}\label{p10}
Suppose given any  $a=(a_1,\dots,a_r)\models n$ with all the multinomial coefficients, ${{n-1}\choose{{a_1},\dots,{a_{k}-1},\dots,{a_r}}}$ not from the same residue class modulo $r$. Then for each $1\leq s < r$, there exists a permutation $\pi \in S_r$ such that
\begin{equation*}
\sum\limits_{k=1}^{r-1} k{{n-1}\choose{{a_{\pi(1)}},\dots,{a_{\pi(k+1)}-1},\dots,{a_{\pi(r)}}}} = s \mod r.
\end{equation*}
\end{proposition}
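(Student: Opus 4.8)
The plan is to prove the stronger statement that the map $\Sigma\colon S_r\to\mathbb Z_r$,
\[
\Sigma(\pi)=\sum_{k=1}^{r-1}k\,b_{\pi(k+1)},\qquad b_v=\binom{n-1}{a_1,\dots,a_v-1,\dots,a_r},
\]
is \emph{surjective}; since $\{1,\dots,r-1\}\subset\mathbb Z_r$ this gives the claim. First I would reindex, writing $\Sigma(\pi)=\sum_{v=1}^{r}\bigl(\pi^{-1}(v)-1\bigr)b_v$, so that $\Sigma(\pi)=\sum_v w_v b_v$ where $(w_v)_v$ ranges over all permutations of $(0,1,\dots,r-1)$ as $\pi$ ranges over $S_r$. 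The hypothesis that the $b_v$ are not all in one residue class means at least two of them are distinct modulo $r$; as $r$ is prime their difference $d=b_{i_0}-b_{j_0}$ is a unit. Since $\sum_{w=0}^{r-1}w=\binom r2\equiv 0\pmod r$, subtracting the constant $b_{j_0}$ from every $b_v$ leaves $\Sigma$ unchanged mod $r$, so I may assume $b_{j_0}=0$ and $b_{i_0}=d$.

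The engine is a subset-sum observation. Grouping the indices $v$ by the value of $b_v$, fix the weights assigned to all value-classes except two chosen classes $X,Y$ with values $\beta_X\neq\beta_Y$; their combined contribution is a constant $R$, and the weights left for $X\cup Y$ form a set $U$ of size $m_X+m_Y$ that I am free to take to be a block of consecutive integers. Then
\[
\Sigma=\beta_X W_X+\beta_Y\Big(\sum_{u\in U}u-W_X\Big)+R=\text{(const)}+(\beta_X-\beta_Y)\,W_X,
\]
where $W_X$ is the sum of the $m_X$ weights given to $X$, i.e.\ an arbitrary $m_X$-subset sum of $U$. Because the $k$-subset sums of a block of consecutive integers fill a full integer interval of length $m_Xm_Y+1$, and $\beta_X-\beta_Y$ is a unit, $\Sigma$ sweeps out an arithmetic progression of unit step and length $m_Xm_Y+1$. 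Taking $X,Y$ to be the two largest classes, this already covers all of $\mathbb Z_r$ whenever $m_Xm_Y\ge r-1$; in particular it settles the case of exactly two distinct values, where $m_X+m_Y=r$ forces $m_Xm_Y\ge r-1$.

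The hard part is the opposite regime, where every value-class is small---most sharply when all $b_v$ are distinct---so that a single pair $(X,Y)$ produces an interval too short to wrap around modulo $r$. Here I would exploit the normalization $b_{j_0}=0$: the zero-class acts as a sink, so that after fixing the weights of all indices outside $\{i_0\}$ together with the zeros, the unit-valued index $i_0$ may receive \emph{any} leftover weight while the surplus weights are absorbed by the zeros, letting $w_{i_0}d$ run through an arithmetic progression. Alternatively I would combine the transposition formula of Corollary~\ref{c1}---which, after dividing by the permutation-invariant product $f_{\lambda_1}\cdots f_{\lambda_r}$, shows that swapping adjacent parts changes $\Sigma$ by $b_i-b_{i+1}$---with Lemma~\ref{l2} to walk from one guaranteed nonzero value to its neighbours in steps of the unit $d$. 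I expect the genuine obstacle to be making this decoupling precise: a single weight-swap also alters the contribution of the index swapped against, so controlling the increment \emph{exactly}, rather than up to an uncontrolled error, is the crux, and is where the consecutive-block choice of $U$ and the zero sinks must be orchestrated together.
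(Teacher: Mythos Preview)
Your reformulation is correct and is in fact the right starting point: writing
\[
\Sigma(\pi)=\sum_{v=1}^{r}w_v\,b_v,\qquad (w_v)_v\ \text{a permutation of}\ (0,1,\dots,r-1),
\]
is exactly how one should think about the sum. But from there you work far too hard, and your argument is genuinely incomplete in the regime you yourself call ``the hard part'': when the value-classes are small (e.g.\ all $b_v$ distinct), your single-pair interval is too short, and the ``zero-sink / weight-swap'' idea you outline is not carried out---you end by saying that controlling the increment exactly ``is the crux,'' which is an admission that the proof is not finished.

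What you are missing is a one-line observation that your own setup makes transparent, and it is precisely the paper's argument. Since $r$ is an odd prime, for every unit $c\in(\mathbb{Z}/r\mathbb{Z})^\times$ the map $w\mapsto cw$ permutes $\{0,1,\dots,r-1\}$ modulo $r$. Hence if $(w_v)$ is a permutation of $\{0,\dots,r-1\}$, so is $(cw_v\bmod r)$, and for the corresponding $\pi'\in S_r$ one has $\Sigma(\pi')\equiv c\,\Sigma(\pi)\pmod r$. Thus the image of $\Sigma$ is closed under multiplication by units: the moment $\Sigma$ hits one nonzero residue it hits all of $1,\dots,r-1$. The paper obtains the value $1$ from Lemma~\ref{l2} and then exhibits the explicit dilation $\pi(j)\equiv s(j-1)+1\pmod r$ to reach $s$. (You do not even need Lemma~\ref{l2}: a transposition of the weights at $i$ and $j$ changes $\Sigma$ by $(w_j-w_i)(b_i-b_j)$, so if $\Sigma$ vanished identically then every $b_i-b_j\equiv 0$, contradicting the hypothesis.)

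So the subset-sum machinery, the consecutive-block choice of $U$, and the zero-sink decoupling are all unnecessary; the step where you anticipate difficulty is exactly the one the dilation trick bypasses.
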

\begin{proof}
By Lemma \ref{l2}, w.l.o.g we may assume that $\sum\limits_{k=1}^{r-1} k{{n-1}\choose{{a_1},\dots,{a_{k+1}-1},\dots,{a_r}}} = 1 \mod r$. we have
\begin{align*}
[a_2+2a_3+\dots+(r-1)a_r]\dfrac{(n-1)!}{{a_1!}{a_2!}\dots{a_r!}} &= rt+1 \hspace{1cm} for{\ } some{\ } t\in \mathbb{Z}\\
a_2 \dfrac{(n-1)!}{{a_1!}{a_2!}\dots{a_r!}} &= rt+1 - [2a_3+\dots+(r-1)a_r]\dfrac{(n-1)!}{{a_1!}{a_2!}\dots{a_r!}}\\
sa_2 \dfrac{(n-1)!}{{a_1!}{a_2!}\dots{a_r!}} &= rt'+s - [2sa_3+\dots+s(r-1)a_r]\dfrac{(n-1)!}{{a_1!}{a_2!}\dots{a_r!}}\\
[sa_2+{(2s)}a_3+\dots+{(s(r-1))}a_r]\dfrac{(n-1)!}{{a_1!}{a_2!}\dots{a_r!}} &= s\mod r
\end{align*}
Here, $\pi =  \left(\begin{smallmatrix} 
1& 2 & 3 & \dots & r\\
1& s+1 & {2s+1} & \dots & {s(r-1)+1}
\end{smallmatrix}\right)$ 
\end{proof}
As an immediate consequence of the above result, we deduce a simple and an important result as follows:
\begin{theorem}\label{t7}
$N_{\zeta ^s}(n)$ (resp. $N_{-\zeta ^s}(n)$) are all equal for $1\leq s < r$.
\end{theorem}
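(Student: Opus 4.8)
The plan is to turn the claimed equalities into explicit bijections on $P(n,r)$ furnished by the $S_r$-action on multipartitions. By Theorem \ref{t4}, the determinant of $\rho_\lambda$ is recorded entirely by the residues $(x_\lambda\bmod r,\ y_\lambda\bmod 2)$: one has $\det\rho_\lambda=\zeta^{s}$ exactly when $x_\lambda\equiv s\pmod r$ and $y_\lambda\equiv 0\pmod 2$, and $\det\rho_\lambda=-\zeta^{s}$ exactly when $x_\lambda\equiv s$ and $y_\lambda\equiv 1$. So both assertions follow at once if, for any unit $t$ modulo $r$, I can produce a bijection of $P(n,r)$ that multiplies $x_\lambda$ by $t$ while fixing $y_\lambda$; taking $\varepsilon=0$ then handles the $\zeta^{s}$ family and $\varepsilon=1$ the $-\zeta^{s}$ family simultaneously.

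First I would separate $x_\lambda$ into an invariant factor and an order-dependent one. Since $\binom{n-1}{a_1,\dots,a_{k+1}-1,\dots,a_r}=a_{k+1}\tfrac{(n-1)!}{a_1!\cdots a_r!}$ (with the usual convention that a binomial with a negative entry vanishes, which also covers $a_{k+1}=0$), definition (\ref{xlambda}) rewrites as
\begin{equation*}
x_\lambda = C_\lambda\,L(a),\qquad C_\lambda=f_{\lambda_1}\cdots f_{\lambda_r}\,\frac{(n-1)!}{a_1!\cdots a_r!},\qquad L(a)=\sum_{j=2}^{r}(j-1)\,a_j.
\end{equation*}
The factor $C_\lambda$ is symmetric in the $r$ indices, hence unchanged by any permutation of the parts, so the whole dependence of $x_\lambda$ on the ordering is concentrated in the linear form $L$.

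The bijection is then the permutation already appearing in the proof of Proposition \ref{p10}. Index the $r$ slots by $\mathbb{Z}_r$ so that slot $i$ carries coefficient $i$ in $L$, and let $\mu_t\in S_r$ move the part in slot $i$ to slot $ti\bmod r$; this is a genuine permutation precisely because $r$ is prime and $t\in\{1,\dots,r-1\}$ is invertible, and it coincides with $\bigl(\begin{smallmatrix}1&2&\cdots&r\\ 1&t+1&\cdots&t(r-1)+1\end{smallmatrix}\bigr)$ there. Relocating the part from slot $i$ to slot $ti$ turns $L(a)$ into $\sum_i(ti\bmod r)\,a_{(i)}\equiv t\,L(a)\pmod r$, and as $C_\lambda$ is untouched this gives $x_{\mu_t(\lambda)}\equiv t\,x_\lambda\pmod r$; meanwhile $y_{\mu_t(\lambda)}=y_\lambda$ by Corollary \ref{c1}(2). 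Thus $\mu_t$ is a bijection of $P(n,r)$ that scales $x$ by $t$ and preserves $y$.

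To conclude, given $s,s'\in\{1,\dots,r-1\}$ I would put $t\equiv s's^{-1}\pmod r$, which is again a unit as $r$ is prime; then $\mu_t$ restricts to a bijection from $\{\lambda:x_\lambda\equiv s,\ y_\lambda\equiv\varepsilon\}$ onto $\{\lambda:x_\lambda\equiv s',\ y_\lambda\equiv\varepsilon\}$, yielding $N_{\zeta^{s}}(n)=N_{\zeta^{s'}}(n)$ for $\varepsilon=0$ and $N_{-\zeta^{s}}(n)=N_{-\zeta^{s'}}(n)$ for $\varepsilon=1$. I expect the only delicate points to be bookkeeping rather than conceptual: verifying that $C_\lambda$ is truly $S_r$-invariant, that the integer form $L$ transforms by the factor $t$ only after reduction modulo $r$ (so one must argue throughout in $\mathbb{Z}_r$), and that $\mu_t$ is well defined as a slot permutation, which is exactly where primality of $r$ is used. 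Notably, no hypothesis comparing the residue classes of the underlying multinomial coefficients is needed: any composition forced to have $x_\lambda\equiv 0$ simply lies outside every set being matched for $s,s'\neq 0$.
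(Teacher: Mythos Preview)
Your argument is correct and follows essentially the same route as the paper: both rely on the slot permutation ``multiply the index by a unit $t$ modulo $r$'' (the $\pi$ exhibited in the proof of Proposition~\ref{p10}), together with the $S_r$-invariance of $y_\lambda$ from Corollary~\ref{c1}(2), to move between the sets $\{x_\lambda\equiv s\}$ for $1\le s<r$. Your write-up is a bit more explicit than the paper's in making clear that a single fixed permutation $\mu_t$ (independent of $\lambda$) furnishes the bijection, whereas the paper leaves this implicit by invoking Proposition~\ref{p10}.
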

\begin{proof}
Suppose, we have a multipartition $\lambda=(\lambda_1,\dots,\lambda_r)\in P(n,r)$ with determinant other than $\pm 1$, i.e., $x_{\lambda} \neq 0 {\ } mod {\ } r$. Then for any $1\leq s <r$, by Proposition \ref{p10}, we have a permutation $\pi$ such that
\[
det \rho _{\pi(\lambda)} =
\begin{cases}
\zeta ^s \quad if{\ } y _{\lambda} = 0 {\ } mod {\ }2\\
-\zeta ^s \quad if{\ } y _{\lambda} = 1 {\ } mod {\ }2.
\end{cases}
\]
\end{proof}
\begin{theorem}\label{t7a}
If $n<r$, then we have $N_{\zeta ^s}(n)$ (resp. $N_{-\zeta ^s}(n)$) are all equal for $1\leq s \leq r$.
\end{theorem}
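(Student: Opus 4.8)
The plan is to exhibit a free action of the cyclic group $\mathbb{Z}/r$ on the set $P(n,r)$ of multipartitions that fixes the sign datum $y_\lambda$ while cycling the power $x_\lambda$ through every residue class modulo $r$. Concretely, I would let $c=(1\,2\,\cdots\,r)\in S_r$ act by cyclically shifting the parts, $c(\lambda)=(\lambda_r,\lambda_1,\dots,\lambda_{r-1})$. By Corollary \ref{c1}(2) the quantity $y_\lambda$ is $S_r$-invariant, so $y_{c(\lambda)}=y_\lambda$ and the entire $\langle c\rangle$-orbit of $\lambda$ carries a single sign. The payoff I am aiming for is that each orbit has size exactly $r$ and distributes one multipartition into each possible value of the $\zeta$-power, which forces the counts to coincide.

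First I would record the effect of $c$ on $x_\lambda$. Writing $M=f_{\lambda_1}\cdots f_{\lambda_r}\,(n-1)!\,(a_1!\cdots a_r!)^{-1}$, the defining formula (\ref{xlambda}) gives $x_\lambda\equiv M\sum_{j=1}^r (j-1)a_j \bmod r$, and a direct telescoping computation then shows $x_{c(\lambda)}-x_\lambda\equiv M\,(n-r a_r)\equiv Mn \bmod r$; note that $M$ is symmetric in the parts and hence constant along the orbit, so $x_{c^t(\lambda)}\equiv x_\lambda+t\,Mn \bmod r$ for all $t$.

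The decisive observation, and the only place the hypothesis $n<r$ is used, is that $Mn$ is a unit modulo $r$. Since $n<r$ and $r$ is prime, both $(n-1)!$ and each $a_i!$ are coprime to $r$; moreover each $f_{\lambda_k}$ divides $a_k!$ (the dimension of an irreducible divides the group order) with $a_k\le n<r$, so $r\nmid f_{\lambda_k}$. Hence $M\in(\mathbb{Z}/r)^\times$, and since $1\le n<r$ gives $r\nmid n$, the increment $Mn$ is a generator of $(\mathbb{Z}/r,+)$. The same hypothesis makes the action free: a fixed point of $c$ would require $\lambda_1=\cdots=\lambda_r$ and hence $a_i=n/r$, which is impossible for $1\le n<r$; as $\langle c\rangle\cong\mathbb{Z}/r$ has prime order, no nontrivial power of $c$ can fix a point either, so every orbit has exactly $r$ elements.

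Assembling these facts finishes the argument. Each $\langle c\rangle$-orbit is a set of $r$ multipartitions sharing one sign $y\in\{0,1\}$ whose $x$-values are $x_\lambda,\,x_\lambda+Mn,\dots,x_\lambda+(r-1)Mn$, that is, each residue modulo $r$ exactly once. By the determinant formula (Theorem \ref{t4}), a $y=0$ orbit contributes exactly one multipartition with determinant $\zeta^s$ for each $s\in\{0,1,\dots,r-1\}$, and a $y=1$ orbit contributes exactly one with determinant $-\zeta^s$; summing over orbits gives $N_{\zeta^s}(n)$ equal to the number of $y=0$ orbits and $N_{-\zeta^s}(n)$ equal to the number of $y=1$ orbits, independently of $s$. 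Since $\zeta^r=\zeta^0=1$, this is exactly the asserted equality for $1\le s\le r$. I expect the main obstacle to be the clean verification that $Mn$ is a unit and that the action is free; both rest on $n<r$, which is precisely what rules out the degenerate ``all multinomials in one residue class'' situation of Theorem \ref{tx} and thereby upgrades the range $1\le s<r$ of Theorem \ref{t7} to the full range $1\le s\le r$.
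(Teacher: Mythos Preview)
Your argument is correct, and its core mechanism---shifting by the $r$-cycle $c=(1\,2\cdots r)$ and observing that the resulting increment in $x_\lambda$ is a unit modulo $r$ precisely because $n<r$---is the same one the paper uses. The packaging, however, is genuinely cleaner. The paper first invokes Lemma~\ref{l2} and Proposition~\ref{p10} to normalize so that the weighted multinomial sum is $1\bmod r$, then finds a single power $\pi^t$ of the cycle bringing it to $0\bmod r$, and finally appeals to Theorem~\ref{t7} to cover the remaining residues $1\le s<r$. You instead run the full cyclic group as a free action on $P(n,r)$, verify directly that each orbit has size $r$ and hits every residue class of $x_\lambda$ exactly once while $y_\lambda$ stays constant (via Corollary~\ref{c1}(2)), and read off the equalities immediately. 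This bypasses Theorem~\ref{t7}, Lemma~\ref{l2}, and Proposition~\ref{p10} entirely, and makes the role of the hypothesis $n<r$ completely transparent: it simultaneously forces freeness (no multipartition with all parts equal) and invertibility of the increment $Mn$ (every factor of $M$ and of $n$ is strictly less than the prime $r$). The paper's route has the advantage of building incrementally on results already in hand; yours has the advantage of being a self-contained bijective proof.
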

\begin{proof}
For $n<r$, using Lemma \ref{l2} and Proposition \ref{p10}, w.l.o.g, we have $a_1,\dots,a_r$ with 
\begin{equation*}
\sum_{k=1}^{r-1}{k{{n-1}\choose{{a_1},\dots,{a_{k+1} -1},\dots,{a_r}} }} = 1 \mod r. 
\end{equation*}
Since $\sum _{k=1}^{r}a_k \neq 0 \mod r$, we can always find some $t \in \{1,\dots,r-1\}$ such that 
\begin{equation*}
t\sum_{k=1}^{r-1}{{{n-1}\choose{{a_1},\dots,{a_{k+1} -1},\dots,{a_r}} }} = -1 \mod r.
\end{equation*}
On adding the above equations,
\begin{equation*}
\sum_{k=1}^{r-1}{(t+k){{n-1}\choose{{a_1},\dots,{a_{k+1} -1},\dots,{a_r}} }} = 0 \mod r.
\end{equation*}
The left hand side is actually $\sum_{\pi ^t(k)=1}^{r-1}{\pi ^t(k){{n-1}\choose{{a_{\pi ^t(1)}},\dots,{a_{\pi ^t(k+1)} -1},\dots,{a_{\pi ^t(r)}}} }}$, where $\pi$ is the cyclic permutation $(1 \, 2\dots r) \in S_r$. Using Corollary \ref{c1} and Theorem \ref{t7} together with the fact that $n<r$, we have $N_{\zeta ^s}(n)$ (resp. $N_{-\zeta ^s}(n)$) are all equal for $1\leq s \leq r$.
\end{proof}
Assume $n$ to be a multiple of $r$. If we consider this particular composition $(a_1,a_2,\dots,a_r)\models n$ with $a_i =-a_j{\ } mod {\ }r$ for some $1 \leq i,j \leq r$ and $a_k = 0{\ } mod {\ }r$ for all $k \neq i,j$ then the summation $\sum\limits_{k=1}^{r-1} k{{n-1}\choose{{a_1},\dots,{a_{k+1}-1},\dots,{a_r}}} \neq 0{\ } mod{\ } r$ upto permutation of the indices $\{1\dots k\}$. So the determinant is equal to $\pm 1$ only if any of the $f_{\lambda_k}$ is congruent to $0$ modulo $r$.
\begin{proposition}
Suppose $n$ is not a multiple of $r$. The following holds for $1 \leq s  <r$,
\begin{equation*}
N_{\zeta ^s}(n) \leq N_{1}(n) {\ } and {\ } N_{-\zeta ^s}(n) \leq N_{-1}(n).
\end{equation*}
\end{proposition}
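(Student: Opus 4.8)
The plan is to realize each inequality through an explicit injection built from the cyclic permutation $c=(1\,2\,\cdots\,r)\in S_r$ acting on $P(n,r)$ by permuting the $r$ parts of a multipartition, $c\cdot\lambda=(\lambda_{c^{-1}(1)},\dots,\lambda_{c^{-1}(r)})$. By Theorem \ref{t4} the determinant of $\rho_\lambda$ is recorded by the pair $(x_\lambda\bmod r,\ y_\lambda\bmod 2)$, so that $N_{\zeta^s}(n)=\#\{\lambda\in P(n,r):x_\lambda\equiv s,\ y_\lambda\equiv 0\}$ and $N_1(n)=\#\{\lambda:x_\lambda\equiv 0,\ y_\lambda\equiv 0\}$, with the descriptions of $N_{-\zeta^s}(n)$ and $N_{-1}(n)$ obtained by setting $y_\lambda\equiv 1$. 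Since $c$ preserves the level $y_\lambda$ by Corollary \ref{c1}(2), it suffices to produce, for each fixed parity of $y_\lambda$, an injection from the multipartitions with $x_\lambda\equiv s$ into those with $x_\lambda\equiv 0$.

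First I would record how $x_\lambda$ transforms under $c$. Writing $b_k=\binom{n-1}{a_1,\dots,a_k-1,\dots,a_r}$ and $M=\binom{n}{a_1,\dots,a_r}$, a telescoping of the sum defining $x_\lambda$ in \eqref{xlambda}, identical in spirit to the computation in the proof of Theorem \ref{t7a}, gives $x_{c\cdot\lambda}\equiv x_\lambda+f_\lambda\pmod r$, and hence $x_{c^{t}\cdot\lambda}\equiv x_\lambda+t\,f_\lambda\pmod r$, with $f_\lambda$ constant along the $c$-orbit. The crux — and the step I expect to be the main obstacle — is the following dichotomy, which is exactly where $r\nmid n$ enters: \emph{if $r\nmid n$ and $f_\lambda\equiv 0\pmod r$, then $x_\lambda\equiv 0\pmod r$}. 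To prove it, factor $f_\lambda=\big(\prod_k f_{\lambda_k}\big)M$. If $\prod_k f_{\lambda_k}\equiv 0$, then $x_\lambda$, being this same product times the integer $\sum_k (k-1)b_k$, vanishes modulo $r$. If instead $M\equiv 0$, then the integer identity $n\,b_k=a_k\,M$ (immediate from $b_k=\tfrac{(n-1)!\,a_k}{a_1!\cdots a_r!}$ and $M=\tfrac{n!}{a_1!\cdots a_r!}$) gives $n\,b_k\equiv 0$ for every $k$, whence $b_k\equiv 0$ for all $k$ because $r$ is prime and $r\nmid n$, so again $x_\lambda\equiv 0$. This is precisely the contrast drawn in the remark preceding the statement: when $r\mid n$ one can manufacture compositions with $f_\lambda\equiv 0$ yet $x_\lambda\not\equiv 0$, and the construction below breaks.

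With the dichotomy in hand the injection is immediate. Fix $s$ with $1\le s<r$ and a parity for $y_\lambda$. Given $\lambda$ with $x_\lambda\equiv s\not\equiv 0$, the dichotomy forces $f_\lambda\not\equiv 0\pmod r$, so $f_\lambda$ is invertible modulo the prime $r$; hence among $c^{0}\!\cdot\lambda,\dots,c^{\,r-1}\!\cdot\lambda$ the residues $x_\lambda+t f_\lambda$ run over all of $\mathbb{Z}_r$ exactly once, and there is a unique exponent $t(\lambda)$ with $x_{c^{t(\lambda)}\cdot\lambda}\equiv 0$. Define $\Phi(\lambda)=c^{t(\lambda)}\cdot\lambda$; it satisfies $x\equiv 0$ and carries the same $y$-parity, so it is counted by $N_1(n)$ (resp.\ $N_{-1}(n)$). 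For injectivity, observe that $\Phi(\lambda)$ lies in the $c$-orbit of $\lambda$, and since $f$ is an invertible constant on that orbit there is a unique member with $x\equiv s$; thus $\lambda$ is recovered from $\Phi(\lambda)$, so $\Phi$ is injective. This gives $N_{\zeta^s}(n)\le N_1(n)$, and running the identical argument on the level $y_\lambda\equiv 1$ gives $N_{-\zeta^s}(n)\le N_{-1}(n)$. The only delicate ingredient is the dichotomy of the second paragraph; the remainder is bookkeeping for the free $\mathbb{Z}_r$-action induced by $c$.
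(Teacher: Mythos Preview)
Your proof is correct and rests on the same device as the paper's: the cyclic shift $c=(1\,2\cdots r)$ on multipartitions together with the observation (your dichotomy) that when $r\nmid n$ every multipartition with $r\mid f_\lambda$ already has $x_\lambda\equiv 0$, so the ``excess'' sits entirely in the $\pm 1$ bins. The paper phrases this at the level of compositions, invoking the argument of Theorem~\ref{t7a} and noting that the remaining contributions (compositions with all $\binom{n-1}{a_1,\dots,a_k-1,\dots,a_r}$ in one residue class, or parts $\lambda_k$ with $r\mid f_{\lambda_k}$) land only in $N_{\pm 1}$; you instead build an explicit injection on $c$-orbits. Your formulation is tighter: the identity $x_{c\cdot\lambda}\equiv x_\lambda+f_\lambda\pmod r$ and the two-line dichotomy via $n\,b_k=a_k M$ make transparent why $r\nmid n$ is exactly what is needed, and the free $\mathbb{Z}_r$-action on orbits with $f_\lambda$ invertible yields injectivity without the somewhat cryptic ``$a_k<r$'' clause in the paper's argument.
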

\begin{proof}
Given any composition $a=(a_1,\dots,a_r) \models n$ with $a_k <r$ and all the multinomial coefficients, ${{n-1}\choose{{a_1},\dots,{a_{k}-1},\dots,{a_r}}}$, $1\leq k \leq r$ not in the same residue class modulo $r$, we can find a cyclic permutation as mentioned in Theorem \ref{t7a} which implies that $N_{\zeta ^s}(a)$ (resp. $N_{-\zeta ^s}(a)$) are all equal, for $1\leq s \leq r$. There may also exists compositions of $n$ with all the multinomial coefficients, ${{n-1}\choose{{a_1},\dots,{a_{k}-1},\dots,{a_r}}}$, $1\leq k \leq r$ lying in the same residue class modulo $r$ and partitions of $a_k$ with dimension as a multiple of $r$. Therefore,
\begin{equation*}
N_{\zeta ^s}(n) \leq N_{1}(n) {\ } and {\ } N_{-\zeta ^s}(n) \leq N_{-1}(n).
\end{equation*} 
\end{proof}
\section{Count in terms of $(a_1,a_2,\dots,a_r)$}
In this section, we give formulas for counting the number of multipartitions  whose corresponding irreducible representation has the given determinant by counting the corresponding $y_{\lambda}$ being odd or even and as a consequence of these results we will prove our main result. As we are counting the number of multipartitions obtained from a given composition, and the fact that a composition obtained by permuting the parts of a given composition will give the same set of multipartitions as the former one, and hence we pick the canonical candidate in the compositions, say the partitions. From now on with a slight abuse of notation we use $a= (a_1,\dots,a_r)\vdash n$ for the sequence $a_1 \geq a_2 \geq\dots \geq a_r\geq 0$ and $\sum_{k=1}^{r} a_k =n$.

For any $a=(a_1,a_2,\dots,a_r)\vdash n$, first we calculate the number of multipartitions $(\lambda_1,\dots,\lambda_r)$ of $n$ obtained from the given partition $a\vdash n$ with $|\lambda_k|=a_k$ such that $y_{\lambda}$ is odd. Note that this also gives a count of the corresponding multipartitions with $y_{\lambda}$ is even.  Since the value of $y_{\lambda}$ remains same under under the action of $S_r$ on the multipartition $\lambda$ [Proposition \ref{p10}], we only need to find the number of multipartitions obtained from $a \vdash n$ such that the corresponding $y_{\lambda}$ is odd  (resp. even).

For $a\in C(n)$, let $A_0(a)$ (resp. $A_1(a)$) denote the number of $\lambda=(\lambda_1,\dots,\lambda_r)\in P(n,r)$ such that $|\lambda_k|=a_k$ for all $k\in\{1,\dots,r\}$ and $y_{\lambda}$ is even (resp. odd). Let $A_0(n)$ (resp. $A_1(n)$) be the number of $\lambda=(\lambda_1,\dots,\lambda_r)\in P(n,r)$ such that $y_\lambda$ is even (resp. odd). Also, we have $A_0(a)+A_1(a)= \prod_{k=1}^{r} p(a_k)$, where $p(a_k)$ denotes the number of partitions of $a_k$.
\begin{lemma}\label{l3} [\cite{spallone}, Corollary 2]
Let $B(n)$ denote the number of Chiral partitions of $S_n$ [\cite{amri},Theorem 1].
\begin{enumerate}
\item $\{\lambda \vdash n | f_\lambda = g_\lambda = 1{\ } mod {\ }2\} = \dfrac{1}{2} A(n)$
\item  $\{\lambda \vdash n | f_\lambda = 1{\ } mod {\ }2, g_\lambda = 0 {\ }mod{\ } 2\} = \dfrac{1}{2} A(n)$
\item  $\{\lambda \vdash n | f_\lambda = 0{\ } mod{\ } 2, g_\lambda = 1{\ } mod {\ }2\} = B(n)-\dfrac{1}{2} A(n)$.
\end{enumerate}
\end{lemma}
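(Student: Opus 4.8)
The plan is to reduce all three counts to a single structural fact: conjugation of partitions $\lambda \mapsto \lambda'$ is an involution that preserves the dimension $f_\lambda$ but, on odd-dimensional partitions, toggles the parity of $g_\lambda$. First I would recall the two standard facts $f_{\lambda'} = f_\lambda$ and $\chi_{\lambda'}(w) = \mathrm{sgn}(w)\,\chi_\lambda(w)$ for all $w \in S_n$. Evaluating the second at $s_1 = (12)$ gives $\chi_{\lambda'}(s_1) = -\chi_\lambda(s_1)$, and substituting into the definition $g_\lambda = \tfrac{1}{2}(f_\lambda - \chi_\lambda(s_1))$ yields the key identity
\[
g_\lambda + g_{\lambda'} = f_\lambda .
\]

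Next I would study this involution restricted to the set $\{\lambda \vdash n : f_\lambda \text{ odd}\}$, which has size $A(n)$ and is stable under conjugation. I claim it has no fixed points: if $\lambda = \lambda'$, then $\chi_\lambda(s_1) = -\chi_\lambda(s_1) = 0$, so $f_\lambda = 2 g_\lambda$ is even, contradicting $f_\lambda$ odd. Hence the odd-dimensional partitions fall into $\tfrac{1}{2}A(n)$ conjugate pairs $\{\lambda, \lambda'\}$ with $\lambda \neq \lambda'$, and inside each pair the identity above with $f_\lambda$ odd forces exactly one of $g_\lambda, g_{\lambda'}$ to be odd. Summing over the pairs establishes Part (1), namely that there are $\tfrac{1}{2}A(n)$ partitions with $f_\lambda$ and $g_\lambda$ both odd.

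Finally I would obtain Parts (2) and (3) by elementary bookkeeping. Since the odd-dimensional partitions number $A(n)$ and exactly $\tfrac{1}{2}A(n)$ of them have $g_\lambda$ odd by Part (1), the remaining $A(n) - \tfrac{1}{2}A(n) = \tfrac{1}{2}A(n)$ have $g_\lambda$ even, giving Part (2). For Part (3), the characterization $\lambda$ chiral $\iff g_\lambda$ odd (from [\cite{amri}, Theorem 8]) shows that the partitions with $g_\lambda$ odd number exactly $B(n)$; removing the $\tfrac{1}{2}A(n)$ of them that are also odd-dimensional leaves $B(n) - \tfrac{1}{2}A(n)$ with $f_\lambda$ even. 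I do not expect a serious obstacle here; the one point requiring care is the absence of self-conjugate odd-dimensional partitions, which is settled cleanly by the vanishing $\chi_\lambda(s_1) = 0$ at any self-conjugate $\lambda$.
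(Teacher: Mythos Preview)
Your argument is correct. Note, however, that the paper does not supply its own proof of this lemma: it is quoted verbatim from \cite{spallone} (Corollary~2 there), so there is no in-paper proof to compare against. That said, the method you use is exactly the one underlying the cited result: the key identity $g_\lambda + g_{\lambda'} = f_\lambda$ is precisely ``Lemma~3 of \cite{spallone}'' invoked later in the present paper (see the proof of Proposition~\ref{p9}), and the absence of self-conjugate odd-dimensional partitions is the standard observation that $\chi_\lambda(s_1)=0$ forces $f_\lambda$ even. So your route is not merely correct but coincides with the original source's approach; the only caveat worth making explicit is the standing hypothesis $n\ge 2$, without which $s_1$ and hence $g_\lambda$ are undefined and the fixed-point argument does not apply.
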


\begin{proposition}\label{p8}
Let $(a_1,\dots,a_r)\vdash n$.
\begin{enumerate} 
\item[1.] If ${{n-2}\choose{{a_1},\dots,{a_k-2},\dots,{a_r}}}$ are even for all $k\in\{1,\dots,r\}$, then
\[ A_1(a) =
\begin{cases}
\prod_{k=1}^{r}{A(a_k)} &\quad   if{\ }{\dfrac{(n-2)!}{a_1!a_2!\dots a_r!}}\sum\limits_{i \neq j}{a_ia_j}{\ }\text{is odd}\\
0 &\quad  \text{if}{\ }{\dfrac{(n-2)!}{a_1!a_2!\dots a_r!}}\sum\limits_{i \neq j}{a_ia_j}{\ }\text{is even}.\\
\end{cases}
\]
\item[2.] If $m$ of the multinomial coefficients ${{n-2}\choose{{a_1},\dots,{a_i-2},\dots,{a_r}}}$ are odd, then
\begin{equation*}
A_1(a)=\sum_{i \in I}B(a_i)\prod_{{k=1},{k\neq i}}^{r}A(a_k)-\dfrac{m-1}{2}\prod_{k=1}^{r}{A(a_k)},
\end{equation*}
where $I:=\{i|{{n-2}\choose{{a_1},\dots,{a_i-2},\dots,{a_r}}}\text{ is odd }, 1\leq i\leq r \}$.
\end{enumerate}
\end{proposition}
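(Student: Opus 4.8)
The plan is to reduce equation~(\ref{ylambda}) modulo $2$ to the parities of the numbers $f_{\lambda_k}$ and $g_{\lambda_k}$, and then apply the per-factor distributions from Lemma~\ref{l3}. First I would set $\phi_k := f_{\lambda_k}\bmod 2$ and $\gamma_k := g_{\lambda_k}\bmod 2$, note that $\widehat f_{\lambda_k}\equiv\prod_{j\neq k}\phi_j$, and rewrite
\[
y_\lambda \equiv M\prod_{k=1}^r\phi_k \;+\; \sum_{k=1}^r c_k\,\gamma_k\prod_{j\neq k}\phi_j \pmod 2,
\]
where $M:=\dfrac{(n-2)!}{a_1!\cdots a_r!}\sum_{i\neq j}a_ia_j \bmod 2$ and $c_k:={{n-2}\choose{{a_1},\dots,{a_k-2},\dots,{a_r}}}\bmod 2$. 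By Lemma~\ref{l3}, a partition $\mu\vdash a_k$ lies in the classes $(\phi_k,\gamma_k)=(1,1),(1,0),(0,1)$ in exactly $\tfrac12 A(a_k),\ \tfrac12 A(a_k),\ B(a_k)-\tfrac12 A(a_k)$ ways respectively; these counts are the only inputs the argument requires.

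For Part~1 the hypothesis says precisely that $c_k\equiv 0$ for all $k$, so the double sum vanishes and $y_\lambda\equiv M\prod_k\phi_k$. Hence $y_\lambda$ is odd exactly when $M$ is odd \emph{and} every $\phi_k=1$; the number of such $\lambda$ is $\prod_k A(a_k)$, while if $M$ is even the quantity is always $0$, giving $A_1(a)=0$.

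For Part~2 the decisive observation is that the surviving terms depend only on $S:=\{k:\phi_k=1\}$. If $|S|\le r-2$ then every product $\prod_{j\neq k}\phi_j$ and $\prod_k\phi_k$ contains two zero factors, so $y_\lambda\equiv 0$ and nothing is contributed; thus only $|S|=r-1$ and $|S|=r$ matter. When exactly one index $k_0$ has $\phi_{k_0}=0$ the expression collapses to $y_\lambda\equiv c_{k_0}\gamma_{k_0}$, which is odd iff $k_0\in I$ and $\gamma_{k_0}=1$. Counting $\lambda_{k_0}$ in the class $(0,1)$ (that is $B(a_{k_0})-\tfrac12 A(a_{k_0})$ choices) and the other factors in $\phi=1$ (that is $A(a_k)$ choices each), then summing the disjoint families over $k_0\in I$ and using $\tfrac12 A(a_i)\prod_{k\neq i}A(a_k)=\tfrac12\prod_k A(a_k)$, this case contributes
\[
\sum_{i\in I}B(a_i)\prod_{k\neq i}A(a_k)\;-\;\frac{m}{2}\prod_{k=1}^r A(a_k).
\]

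When all $\phi_k=1$ the formula reduces to $y_\lambda\equiv M+\sum_{i\in I}\gamma_i$, so the parity condition pins down the parity of $\sum_{i\in I}\gamma_i$. Since each $\gamma_i$ with $\phi_i=1$ is realized by exactly $\tfrac12 A(a_i)$ partitions \emph{regardless of its value}, and a nonempty parity constraint ($m\ge 1$) splits the $2^m$ sign patterns evenly, this case contributes $2^{m-1}\cdot 2^{-m}\prod_k A(a_k)=\tfrac12\prod_k A(a_k)$, independently of $M$. Adding the two contributions turns $-\tfrac m2$ into $-\tfrac{m-1}{2}$ and yields the claimed formula. I expect the main obstacle to be precisely this last step: recognizing that the $|S|=r$ case always contributes the constant $\tfrac12\prod_k A(a_k)$ (the $M$-dependence vanishing through the even split of parity patterns), and that it combines with the $-\tfrac m2$ term from the $|S|=r-1$ case to give $-\tfrac{m-1}{2}$; the bookkeeping at the degenerate components where $a_k\in\{0,1\}$ (so $A(a_k)$ is odd) should be checked against the conventions underlying Lemma~\ref{l3}.
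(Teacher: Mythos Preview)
Your proposal is correct and follows essentially the same route as the paper: both arguments reduce $y_\lambda$ modulo $2$ to the parities $\phi_k=f_{\lambda_k}\bmod 2$ and $\gamma_k=g_{\lambda_k}\bmod 2$, then split according to how many $\phi_k$ vanish (the paper phrases this as ``none of the $f_{\lambda_k}$'s are even, one is even, \dots''), apply Lemma~\ref{l3} factor by factor, and combine the $|S|=r-1$ contribution $\sum_{i\in I}B(a_i)\prod_{k\neq i}A(a_k)-\tfrac{m}{2}\prod_k A(a_k)$ with the $|S|=r$ contribution $\tfrac12\prod_k A(a_k)$. Your caveat about the degenerate components $a_k\in\{0,1\}$ (where $\tfrac12 A(a_k)$ is not an integer) is a fair one; the paper's proof implicitly relies on the same convention from Lemma~\ref{l3} and does not address it separately either.
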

\begin{proof}
\textit{Case 1}: Suppose ${{n-2}\choose{{a_1},\dots,{a_k-2},\dots,{a_r}}}$ are even for all $k\in\{1,\dots,r\}$.

The parity of $y_{\lambda}$ depends on the parity of $f_{\lambda_k}$'s, $g_{\lambda_k}$'s and $ {\dfrac{(n-2)!}{a_1!a_2!\dots a_r!}}\sum\limits_{i \neq j}{a_ia_j}$. We prove this case by looking for all the possible combinations of $f_{\lambda_k}$'s. Suppose say none of the $f_{\lambda_k}$'s are even, one of the $f_{\lambda_k}$ is even, two of them are even and so on upto $r$ of the $f_{\lambda_k}$'s are even. 

If ${\dfrac{(n-2)!}{a_1!a_2!\dots a_r!}}\sum\limits_{i \neq j}{a_ia_j}$ is even, then $y_{\lambda}$ is always even by the assumption in this case and therefore $A_1(a)$ is equal to $0$.

Now suppose, ${\dfrac{(n-2)!}{a_1!\dots a_r!}}\sum\limits_{i \neq j}{a_ia_j}$ is odd. Since any one of the $f_{\lambda_k}$ is even implies the first term in the summation of equation \ref{ylambda} is even eventually makes $y_{\lambda}$ to be even. So the only possibility is that all the $f_{\lambda_k}$'s are odd, while $g_{\lambda_k}$'s have $2^r$(each could be odd or even) choices. Hence,
\begin{align*}
A_1(a)&=2^{r}\dfrac{1}{2} A(a_1)\dfrac{1}{2} A(a_2)\dots\dfrac{1}{2} A(a_r)\\
&= A(a_1)A(a_2)\dots A(a_r).
\end{align*}
\textit{Case 2}: Suppose we assume that $m$ of the multinomial coefficients ${{n-2}\choose{{a_1},\dots,{a_i-2},\dots,{a_r}}}$ are odd for some $m\in\{1,\dots,r\}$.

Note that if atleast two of the $f_{\lambda_k}$'s are even, then $y_{\lambda}$ is always even irrespective of the parities of other terms in the summation and so the only possible cases are only one of the $f_{\lambda_k}$ is even or all of them are odd.

\textit{Case i:} Suppose that only one of the $f_{\lambda_k}$ is even for some $k$. If $k\notin I$, then both the terms in the summation of equation (\ref{ylambda}) are even and hence this case is not possible.
For that particular $i$'s $\in I$ with  $f_{\lambda_i}$ even, the corresponding $g_{\lambda_i}$ has to be odd, and the number of such partitions will be equal to 
\begin{align*}
&2^{r-1}[B(a_i)-\dfrac{1}{2}A(a_i)]\prod_{{k=1},{i\neq k}}^{r}\dfrac{1}{2^{r-1}}A(a_k) \\
&=B(a_i)\prod_{{k=1},{i\neq k}}^{r}A(a_k)-\dfrac{1}{2}\prod_{k=1}^{r}A(a_k).
\end{align*}
The number of multipartitions of the partition $(a_1,\dots,a_r)$ with exactly one of $\lambda_i$ having $f_{\lambda_i}$ even, $g_{\lambda_i}$ odd for $i \in I$ ( $f_{\lambda_k}$'s even for $i \neq k$) is 
\begin{align*}
&=\sum_{i \in I}\big[B(a_i)\prod_{{k=1},{k\neq i}}^{r}A(a_k)-\dfrac{1}{2}\prod_{k=1}^{r}A(a_k)\big]\\
&=\sum_{i \in I}B(a_i)\prod_{{k=1},{k\neq i}}^{r}A(a_k)-\dfrac{m}{2}\prod_{k=1}^{r}A(a_k).
\end{align*}
\textit{Case ii:}
Consider the case that all the $f_{\lambda_k}$'s are odd, then for $k \notin I$, $g_{\lambda_k}$ could be even or odd and we have $2^{r-m}$ choices. For the rest, one of the $m$ $g_{\lambda}$'s should be fixed as odd or even depending on whether is ${\dfrac{(n-2)!}{a_1!a_2!\dots a_r!}}\sum\limits_{i \neq j}{a_ia_j}$ odd or even. Therefore, we have $2^{m-1}$ choices on $g_{\lambda_i}$'s for $i\in I$.  Then the number of possible partitions are
\begin{equation*}
2^{m-1}2^{r-m} \dfrac{1}{2}A(a_1) \dfrac{1}{2}A(a_2)\dots \dfrac{1}{2}A(a_r)=\dfrac{1}{2}\prod_{k=1}^{r}{A(a_k)}.
\end{equation*}
Summing all the $2^r$ possibilities on the parity of $f_{\lambda_k}$'s, we have
\begin{equation*}
\sum_{i \in I}B(a_i)\prod_{{k=1},{k\neq i}}^{r}A(a_k)-\dfrac{m-1}{2}\prod_{k=1}^{r}{A(a_k)}.
\end{equation*}
\end{proof}
As a consequence of the above proposition, we have the following result.
\begin{corollary}\label{c2}
\begin{equation}
\sum\limits_{s=1}^{r} N_{-\zeta ^s}(a)= Number {\ }of  {\ }ways  {\ }of {\ } permuting {\ } \{a_1,\dots,a_r\} \times A_1(a).
\end{equation}
\begin{equation}
\sum\limits_{s=1}^{r} N_{\zeta ^s}(a)= Number {\ }of  {\ }ways  {\ }of {\ } permuting {\ } \{a_1,\dots,a_r\} \times A_0(a).
\end{equation}
\end{corollary}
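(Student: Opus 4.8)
The plan is to reduce both identities to the $S_r$-invariance of $y_\lambda$ established in Corollary \ref{c1}(2), after first re-expressing the left-hand sides in terms of the parity of $y_\lambda$. By Theorem \ref{t4} the determinant of $\rho_\lambda$ equals $\zeta^{x_\lambda}sgn^{y_\lambda}$; since $sgn^{y_\lambda}$ is trivial exactly when $y_\lambda$ is even, the determinant is a positive power $\zeta^s$ iff $y_\lambda$ is even and a negative power $-\zeta^s$ iff $y_\lambda$ is odd, the exponent being $s\equiv x_\lambda \bmod r$. As $s$ ranges over $1,\dots,r$ it runs through every residue modulo $r$, so the sums $\sum_{s=1}^r N_{\pm\zeta^s}(a)$ forget $x_\lambda$ altogether. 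Thus the first step is to record that $\sum_{s=1}^r N_{-\zeta^s}(a)$ is the number of multipartitions obtained from $a$ with $y_\lambda$ odd, and $\sum_{s=1}^r N_{\zeta^s}(a)$ the number with $y_\lambda$ even, where a multipartition is \emph{obtained from} $a$ precisely when its sequence of part-sizes $(|\lambda_1|,\dots,|\lambda_r|)$ is a rearrangement of the multiset $\{a_1,\dots,a_r\}$.

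Next I would stratify the multipartitions obtained from $a$ by their size sequence $b=(|\lambda_1|,\dots,|\lambda_r|)$. These size sequences range exactly over the distinct rearrangements of $a$, of which there are precisely the number of ways of permuting $\{a_1,\dots,a_r\}$, and the associated strata are disjoint. For a fixed rearrangement $b$, the multipartitions with $|\lambda_k|=b_k$ for all $k$ and $y_\lambda$ odd (resp. even) are by definition counted by $A_1(b)$ (resp. $A_0(b)$). Hence $\sum_{s=1}^r N_{-\zeta^s}(a)=\sum_b A_1(b)$ and $\sum_{s=1}^r N_{\zeta^s}(a)=\sum_b A_0(b)$, with both sums taken over the distinct rearrangements $b$ of $a$.

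The crux is to prove $A_1(b)=A_1(a)$ and $A_0(b)=A_0(a)$ for every rearrangement $b$. Given $b$, I would pick $\pi\in S_r$ with $b_{\pi(k)}=a_k$ for all $k$ and consider $\lambda\mapsto\pi(\lambda)=(\lambda_{\pi(1)},\dots,\lambda_{\pi(r)})$. This is a bijection from the multipartitions with size sequence $b$ onto those with size sequence $a$, and by Corollary \ref{c1}(2) it satisfies $y_{\pi(\lambda)}=y_\lambda$, so it preserves the parity of $y_\lambda$ and hence restricts to bijections on the odd and even loci. This gives $A_1(b)=A_1(a)$ and $A_0(b)=A_0(a)$, and summing over the rearrangements $b$ yields the two displayed formulas, with the permutation count appearing as the common multiplicity.

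The step I expect to demand the most care is the bookkeeping when $a$ has repeated parts: one must ensure that each distinct size sequence $b$ is counted once (so that the multiplicative factor is the multinomial $r!/\prod_j m_j!$ with $m_j$ the multiplicities of the distinct values, matching the ``number of ways of permuting $\{a_1,\dots,a_r\}$''), and that the equality $A_1(b)=A_1(a)$ is independent of the particular $\pi$ chosen to realize $b$. Both are consequences of the facts that the subset of multipartitions obtained from $a$ depends only on the size sequence $b$ and that $y_\lambda$ depends only on the $S_r$-orbit of $\lambda$.
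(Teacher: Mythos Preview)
Your argument is correct and is essentially the natural justification the paper leaves implicit (the paper states Corollary~\ref{c2} without proof, merely labeling it ``a consequence of the above proposition''). The only minor difference is the source of the permutation-invariance of $A_1(a)$: you invoke Corollary~\ref{c1}(2) (i.e., $y_{\pi(\lambda)}=y_\lambda$) directly, whereas the paper's placement suggests reading it off from the explicit formula in Proposition~\ref{p8}, which is visibly symmetric in the $a_k$'s. Your route is the cleaner of the two, since it avoids any case analysis and makes clear that the corollary does not actually depend on the detailed computation of $A_1(a)$.
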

 For the rest of this section we assume that $r$ is an odd prime. Now we have an immediate result, using the above formula and Theorem \ref{tx} as follows.
\begin{proposition}\label{p15}
Given any $a=(a_1,\dots,a_r) \vdash n$ such that the multinomial coefficients ${{n-1}\choose{{a_1},\dots,{a_{k}-1},\dots,{a_r}}}$, $1\leq k\leq r$ lie in the same residue class modulo $r$,  then we have for $1\leq s < r$
\begin{equation*}
N_{\zeta ^s}(a)= N_{-\zeta ^s}(a) =0 
\end{equation*}
\begin{equation*}
N_1(a) = Number {\ }of  {\ }ways  {\ }of {\ } permuting {\ } \{a_1,\dots,a_r\} \times A_0(a)
\end{equation*}
\begin{equation*}
N_{-1}(a)= Number {\ }of  {\ }ways  {\ }of {\ } permuting {\ } \{a_1,\dots,a_r\} \times A_1(a).
\end{equation*}
\end{proposition}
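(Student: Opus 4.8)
The plan is to leverage Theorem \ref{tx} to pin down the value of $x_\lambda$ and then invoke Corollary \ref{c2} to distribute the multipartitions between positive and negative determinant. The hypothesis is that all multinomial coefficients ${{n-1}\choose{{a_1},\dots,{a_k-1},\dots,{a_r}}}$ lie in the same residue class modulo $r$. This is precisely the hypothesis of Theorem \ref{tx}, so I immediately conclude that $x_\lambda = 0 \mod r$ for every multipartition $\lambda$ obtained from the composition $a$ (together with its permutations). Since $x_\lambda$ depends only on the underlying composition and the product $f_{\lambda_1}\cdots f_{\lambda_r}$, and the formula in \eqref{xlambda} is controlled entirely by these multinomial coefficients, this conclusion is stable under permuting the parts $\{a_1,\dots,a_r\}$.

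With $x_\lambda = 0 \mod r$ in hand, the determinant formula $det\,\rho_\lambda = \zeta^{x_\lambda} sgn^{y_\lambda}$ from Theorem \ref{t4} collapses to $\zeta^0 sgn^{y_\lambda} = sgn^{y_\lambda}$, which is $\pm 1$ according to the parity of $y_\lambda$. In particular, no multipartition arising from $a$ can have determinant $\zeta^s$ or $-\zeta^s$ for $1 \leq s < r$, since those values genuinely involve a nontrivial power of $\zeta$. This gives $N_{\zeta^s}(a) = N_{-\zeta^s}(a) = 0$ for $1 \leq s < r$, the first assertion.

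For the remaining two equalities I would partition the multipartitions by the parity of $y_\lambda$. Since every such multipartition has determinant exactly $+1$ (when $y_\lambda$ is even) or $-1$ (when $y_\lambda$ is odd), the count $N_1(a)$ is the number of multipartitions from $a$, over all permutations of its parts, with $y_\lambda$ even, and $N_{-1}(a)$ is the corresponding count with $y_\lambda$ odd. By Proposition \ref{p10}, $y_\lambda$ is invariant under the $S_r$-action that permutes the parts, so for a fixed arrangement of parts the number of multipartitions with $y_\lambda$ even (resp. odd) is exactly $A_0(a)$ (resp. $A_1(a)$) by their definitions. Multiplying by the number of distinct permutations of $\{a_1,\dots,a_r\}$ then yields
\begin{equation*}
N_1(a) = \big(\text{number of permutations of } \{a_1,\dots,a_r\}\big) \times A_0(a),
\end{equation*}
and the analogous identity for $N_{-1}(a)$ with $A_1(a)$.

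The only subtlety to guard against — and the step I expect to require the most care — is the bookkeeping of permutations. Because $x_\lambda$ is permutation-invariant under the present hypothesis (by Theorem \ref{tx} it is always $0$) but $y_\lambda$ is genuinely $S_r$-invariant (Proposition \ref{p10}), permuting the parts does not redistribute multipartitions across the determinant classes; it merely multiplies the count by the number of distinct orderings. I would confirm that this matches the accounting already established in Corollary \ref{c2}, where $\sum_s N_{\zeta^s}(a) = (\text{permutations}) \times A_0(a)$ and $\sum_s N_{-\zeta^s}(a) = (\text{permutations}) \times A_1(a)$: since all the $\zeta^s$ and $-\zeta^s$ terms with $1 \leq s < r$ vanish, the sums reduce to the single surviving terms $N_1(a)$ and $N_{-1}(a)$ respectively, reproducing the claimed formulas and confirming internal consistency.
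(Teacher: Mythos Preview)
Your proposal is correct and follows essentially the same route as the paper, which simply states that the result is immediate from Theorem~\ref{tx} and Corollary~\ref{c2}. You have spelled out the details the paper leaves implicit: Theorem~\ref{tx} forces $x_\lambda \equiv 0 \pmod r$ for every permutation of $a$ (since permuting the parts merely permutes the set of multinomial coefficients, which stay in a single residue class), so the determinant is always $\pm 1$; then Corollary~\ref{c2} collapses to the single surviving terms $N_{\pm 1}(a)$.
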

From table \ref{table1}, we have the following examples of compositions that  satisfies the above formula :
\begin{enumerate}
\item $a_i =a_j {\ } mod {\ } r$, $1\leq i,j \leq r$, $r$ odd prime  
\item $a_i =0 \mod r$ for some $i$ and $a_k = s \mod r$, where $s > \lfloor{\frac{r}{2}}\rfloor$ for all $k \neq i$, $r$ odd prime    
\end{enumerate}
Hence, from now on we make the assumption that given any composition $a=(a_1,\dots,a_r) $ of $ n$ with all the multinomial coefficients ${{n-1}\choose{{a_1},\dots,{a_{k}-1},\dots,{a_r}}}$, $1\leq k\leq r$, doesnot lie in the same residue class modulo $r$.  
\begin{proposition}
Given any $a=(a_1,\dots,a_r) \vdash n$ where $n$ is not a multiple of $r$, with each $a_k <r$. Then, for $1\leq s \leq r$, we have
\begin{equation*}
N_{\zeta ^s}(a) =\dfrac{(r-1)!}{\prod (no.{\ } of{\ }  repetitions {\ }of {\ }a_k's )!} \times A_0(a) 
\end{equation*}
\begin{equation*}
N_{-\zeta ^s}(a) =\dfrac{(r-1)!}{\prod (no.{\ } of{\ }  repetitions {\ }of {\ } a_k's )!} \times  A_1(a).
\end{equation*}
\end{proposition}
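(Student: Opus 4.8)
The plan is to translate the two quantities into counts governed by the determinant formula of Theorem~\ref{t4}, and then to prove an equidistribution statement modulo $r$. Since $\det\rho_\mu=\zeta^{x_\mu}sgn^{y_\mu}$, the number $N_{\zeta^s}(a)$ counts exactly the multipartitions $\mu$ obtained from $a$ (i.e.\ whose sequence of part-sizes is some rearrangement of $(a_1,\dots,a_r)$) with $x_\mu\equiv s\pmod r$ and $y_\mu$ even, while $N_{-\zeta^s}(a)$ counts those with $x_\mu\equiv s\pmod r$ and $y_\mu$ odd. By Corollary~\ref{c1} the value of $y_\mu$ is invariant under the $S_r$-action permuting the parts, and by Corollary~\ref{c2} the total number of multipartitions obtained from $a$ with $y_\mu$ even (resp.\ odd) is $M\,A_0(a)$ (resp.\ $M\,A_1(a)$), where $M=\frac{r!}{\prod(\text{no.\ of repetitions of }a_k)!}$ is the number of distinct rearrangements. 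Hence it suffices to show that among these multipartitions the residue $x_\mu\bmod r$ is equidistributed over the $r$ classes $\{0,1,\dots,r-1\}$: each class then receives a $\tfrac1r$ share, and since $\tfrac{M}{r}=\frac{(r-1)!}{\prod(\text{repetitions})!}$, the two asserted formulas follow immediately (the index $s=r$ corresponding to the class $0$).

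The engine for the equidistribution is the cyclic permutation $c=(1\,2\,\cdots\,r)\in S_r$. First I would record how $x$ transforms under $c$. Writing $b=(|\mu_1|,\dots,|\mu_r|)$ and using the closed form
\begin{equation*}
x_\mu=f_{\mu_1}\cdots f_{\mu_r}\,\tfrac{(n-1)!}{a_1!\cdots a_r!}\sum_{j=1}^{r}(j-1)\,b_j,
\end{equation*}
a direct computation shows that replacing $b$ by its cyclic shift $(b_2,\dots,b_r,b_1)$ alters the inner sum by $-n$ modulo $r$. Combining this with $f_\mu=n\cdot f_{\mu_1}\cdots f_{\mu_r}\,\tfrac{(n-1)!}{a_1!\cdots a_r!}$ yields the clean transformation law
\begin{equation*}
x_{c\mu}\equiv x_\mu-f_\mu\pmod r .
\end{equation*}
Thus along a $\langle c\rangle$-orbit the residue $x$ advances by the constant (and $S_r$-invariant) step $-f_\mu$.

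The crucial point, which I expect to be the main obstacle, is to verify that this step is invertible, i.e.\ $f_\mu\not\equiv0\pmod r$ for every multipartition obtained from $a$. I would factor $f_\mu=\big(\prod_k f_{\mu_k}\big)\binom{n}{a_1,\dots,a_r}$ and treat each factor separately. Since every part satisfies $a_k<r$ and $f_{\mu_k}$ divides $a_k!$, the prime $r$ divides no $f_{\mu_k}$. For the multinomial factor, the standing assumption that the coefficients $\binom{n-1}{a_1,\dots,a_k-1,\dots,a_r}=\tfrac{(n-1)!}{a_1!\cdots a_r!}\,a_k$ do not all lie in one residue class forces $\tfrac{(n-1)!}{a_1!\cdots a_r!}\not\equiv0\pmod r$ (otherwise all of them would be $0$), and hence, as $n\not\equiv0\pmod r$, also $\binom{n}{a_1,\dots,a_r}\not\equiv0\pmod r$. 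Combining the two, $f_\mu\not\equiv0\pmod r$. Because $r$ is prime, the step $-f_\mu$ generates $\mathbb{Z}_r$ additively, so over a full $\langle c\rangle$-orbit the residue $x$ visits each of $0,1,\dots,r-1$ exactly once.

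Finally I would check that every $\langle c\rangle$-orbit has size exactly $r$. A fixed point of some $c^m$ with $0<m<r$ would, since $r$ is prime, be fixed by all of $\langle c\rangle$ and hence have all its parts equal, forcing $n=ra$ and contradicting that $n$ is not a multiple of $r$; so all orbits are free. Partitioning the $M\,A_0(a)$ multipartitions with $y_\mu$ even (and separately the $M\,A_1(a)$ with $y_\mu$ odd) into such orbits, each orbit contributes exactly one element in each residue class of $x$. Counting orbits then gives $N_{\zeta^s}(a)=\tfrac{M}{r}A_0(a)$ and $N_{-\zeta^s}(a)=\tfrac{M}{r}A_1(a)$ for every $1\le s\le r$, which are precisely the claimed formulas.
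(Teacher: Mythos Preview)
Your proof is correct and follows the same overall strategy as the paper: use the $S_r$-invariance of $y_\mu$ together with equidistribution of $x_\mu$ modulo $r$, then divide the totals from Corollary~\ref{c2} by $r$.

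The execution differs in one pleasant way. The paper's own proof here is terse: it observes that $a_k<r$ makes each $f_{\lambda_k}$ coprime to $r$ and then simply divides the sums in Corollary~\ref{c2} by $r$, leaning on the equidistribution already established in Lemma~\ref{l2}, Proposition~\ref{p10}, and Theorems~\ref{t7}--\ref{t7a}. You instead derive the clean transformation law $x_{c\mu}\equiv x_\mu-f_\mu\pmod r$ for the cyclic shift $c=(1\,2\cdots r)$ and combine it with freeness of the $\langle c\rangle$-action (using $r\nmid n$) to get equidistribution in one stroke. This is a more self-contained and transparent mechanism than the paper's route through an auxiliary ordering and a chosen power $\pi^t$; in particular it makes visible exactly where each hypothesis ($a_k<r$, $r\nmid n$, and the standing assumption on the multinomial residues) is used to guarantee $f_\mu\not\equiv0\pmod r$ and orbit size $r$. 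The conclusions and the arithmetic $\tfrac{M}{r}=\dfrac{(r-1)!}{\prod(\text{repetitions})!}$ coincide with the paper's.
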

\begin{proof}
Note that $a_k<r$ implies that all partitions of $a_k$ has dimension relatively prime to $r$. So by corollary \ref{c2}, with $1\leq s \leq r$
\begin{align*}
N_{\zeta ^s}(a) &=\dfrac{ no{\ } of {\ } ways{\ }  of {\ }permuting {\ }\{a_1,\dots,a_r\} \times A_0(a)}{r}\\
&=\dfrac{(r-1)!}{\prod (no.{\ } of{\ }  repetitions{\ } of {\ } a_k's )!}  \times A_0(a)
\end{align*}
\begin{align*}
N_{-\zeta ^s}(a) &=\dfrac{ no{\ } of{\ }  ways{\ }  of {\ }permuting {\ }\{a_1,\dots,a_r\} \times A_1(a)}{r}\\
&=\dfrac{(r-1)!}{\prod(no.{\ } of{\ }  repetitions {\ }of {\ } a_k's )!} \times  A_1(a).
\end{align*}
\end{proof}
Note that if $n < r$, then for any $a = (a_1,\dots,a_r) \in C(n)$, the above proposition gives us the following closed formulas for $1\leq s \leq r$
\begin{equation}\label{maineq}
\begin{split}
N_{\zeta ^s}(a) =\dfrac{(r-1)!}{\prod (no.{\ } of{\ }  repetitions {\ }of {\ }a_k's )!} \times A_0(a) \\
N_{-\zeta ^s}(a) =\dfrac{(r-1)!}{\prod (no.{\ } of{\ }  repetitions {\ }of {\ } a_k's )!} \times  A_1(a).
\end{split}
\end{equation}
\begin{proposition}
Given any $a = (a_1,\dots,a_r)\vdash n$,
\[
N_{\zeta ^s}(a) +N_{-\zeta ^s}(a)  =
\begin{cases}
\dfrac{(r-2)!r}{\prod (no.{\ } of{\ }  repetitions {\ }of {\ }a_k's )!}\prod_{k=1}^{r}{m_r(S_{a_k})} \quad if{\ }n=0{\ }mod{\ }r\\
\dfrac{(r-1)!}{\prod (no.{\ } of{\ }  repetitions {\ }of {\ }a_k's )!}\prod_{k=1}^{r}{m_r(S_{a_k})} \quad if{\ }n\neq 0{\ }mod{\ }r\\
\end{cases}
\]
where $1\leq s <r $ and

$N_{1}(a) +N_{-1}(a) $
\[
=
\begin{cases}
\dfrac{r!}{\prod (no.{\ } of{\ }  repetitions {\ }of {\ }a_k's )!}\big{[}\prod_{k=1}^{r}{p({a_k})}-\prod_{k=1}^{r}{m_r(S_{a_k})}\big{]}{\ }if{\ }n=0{\ }mod{\ }r\\
\dfrac{(r-1)!}{\prod (no.{\ } of{\ }  repetitions {\ }of {\ }a_k's )!}\big{[}r \prod_{k=1}^{r}{p({a_k})}-(r-1)\prod_{k=1}^{r}{m_r(S_{a_k})}\big{]}\quad if{\ }n\neq 0{\ }mod{\ }r.\\
\end{cases}
\]
\end{proposition}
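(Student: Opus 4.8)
The plan is to run everything off Corollary \ref{c2}, whose two identities read
$$\sum_{s=1}^{r} N_{\zeta^s}(a)=W\,A_0(a),\qquad \sum_{s=1}^{r} N_{-\zeta^s}(a)=W\,A_1(a),$$
where I abbreviate $W=\tfrac{r!}{\prod(\text{no. of repetitions of the }a_k\text{'s})!}$ for the number of distinct orderings of the parts. Adding these and using $A_0(a)+A_1(a)=\prod_k p(a_k)$ gives the grand total $W\prod_k p(a_k)$ of multipartitions built from $a$. Since the proof of Theorem \ref{t7} only permutes the components of $\lambda$, it applies verbatim at the level of the single partition $a$ and shows that $N_{\zeta^s}(a)+N_{-\zeta^s}(a)$ is independent of $s$ for $1\le s<r$; call this common value $M(a)$. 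Hence it suffices to determine $M(a)$ (equivalently $N_1(a)+N_{-1}(a)$), after which the other quantity is forced by the grand total, and both displayed formulas will follow by the arithmetic $r!=r\,(r-1)!$ and $(r-1)(r-2)!=(r-1)!$.

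Next I would isolate the mechanism that separates determinant $\pm1$ from determinant $\pm\zeta^s$. Inspecting \eqref{xlambda}, $x_\lambda$ carries the prefactor $f_{\lambda_1}\cdots f_{\lambda_r}$, so $r\mid f_{\lambda_1}\cdots f_{\lambda_r}$ forces $x_\lambda\equiv0\pmod r$ and hence $\det\rho_\lambda=\pm1$. Because $r$ is prime, the number of multipartitions attached to one fixed ordering whose dimension product is coprime to $r$ equals $\prod_k m_r(S_{a_k})$ (this is exactly the $m_r$ of Theorem \ref{t3} applied factor by factor), so summing over the $W$ orderings produces $W\prod_k m_r(S_{a_k})$ coprime multipartitions, while the remaining $W\big[\prod_k p(a_k)-\prod_k m_r(S_{a_k})\big]$ all have $x_\lambda\equiv 0$ and contribute only to $N_1(a)+N_{-1}(a)$.

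For the coprime multipartitions I would write $x_\lambda=(f_{\lambda_1}\cdots f_{\lambda_r})\,C(a')$, where $a'$ is the chosen ordering and $C(a')=\sum_{k=1}^{r-1}k\binom{n-1}{a'_1,\dots,a'_{k+1}-1,\dots,a'_r}$ depends only on the ordering. As the prefactor is now a unit modulo $r$, we have $x_\lambda\equiv 0$ if and only if $C(a')\equiv 0\pmod r$, so the whole computation collapses to a combinatorial count of how $C$ is distributed modulo $r$ over the orderings of the parts. Using the transformation rule of Corollary \ref{c1} together with the cyclic–shift argument of Theorem \ref{t7a} — a cyclic shift of the positions changes $C$ by a fixed multiple of $\binom{n}{a_1,\dots,a_r}=\sum_k\binom{n-1}{a_1,\dots,a_k-1,\dots,a_r}$, and the dilations used there rescale $C$ by a unit — I would argue that $C$ is equidistributed over all $r$ residues when $n\not\equiv 0\pmod r$ (so exactly a $\tfrac1r$ fraction of the coprime multipartitions have $x_\lambda\equiv 0$, giving $M(a)=\tfrac1r W\prod_k m_r(S_{a_k})=\tfrac{(r-1)!}{\prod(\text{reps})!}\prod_k m_r(S_{a_k})$), and equidistributed over the $r-1$ nonzero residues with empty fibre over $0$ when $n\equiv 0\pmod r$ (so $M(a)=\tfrac1{r-1}W\prod_k m_r(S_{a_k})=\tfrac{r(r-2)!}{\prod(\text{reps})!}\prod_k m_r(S_{a_k})$). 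Subtracting $(r-1)M(a)$ from the grand total then yields the two cases of $N_1(a)+N_{-1}(a)$.

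The hard part will be the last step: pinning down the fibre of $C$ over $0$, and in particular justifying the clean dichotomy along $n\bmod r$. The cyclic shift moves $C$ only by a multiple of $\binom{n}{a_1,\dots,a_r}$, and this shift can degenerate when that multinomial is divisible by $r$, which is not controlled by $n\bmod r$ alone; once the translation part collapses one is left with the dilation action, which permutes the nonzero classes transitively but says nothing a priori about how many orderings land on $C\equiv 0$. Showing that the running hypothesis — that the multinomials $\binom{n-1}{a_1,\dots,a_k-1,\dots,a_r}$ do not all lie in one residue class modulo $r$ — is precisely what pins this fibre down (forcing it to the fraction $\tfrac1r$ when $n\not\equiv0$ and to be empty when $n\equiv0$) is the delicate combinatorial core, and is where I would concentrate the effort and check small cases carefully before committing to the general argument.
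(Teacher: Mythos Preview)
Your approach mirrors the paper's: split the multipartitions by whether $r\mid\prod_k f_{\lambda_k}$, count the coprime piece via $\prod_k m_r(S_{a_k})$, and reduce to the distribution of $C(a')$ modulo $r$ over the orderings using Lemma~\ref{l2}, Theorem~\ref{t7}, and the cyclic-shift idea of Theorem~\ref{t7a}. The paper is equally terse at exactly the point you flag as delicate --- it simply asserts equidistribution over the nonzero residues (citing Lemma~\ref{l2} and Theorem~\ref{t7}), asserts the full equidistribution when $n\not\equiv 0\pmod r$, and for $n\equiv 0\pmod r$ asserts without further argument that $x_\lambda\equiv 0$ iff $r\mid\prod_k f_{\lambda_k}$ (i.e.\ the fibre of $C$ over $0$ is empty) --- so your caution there is appropriate but does not represent a divergence from the paper's method.
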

\begin{proof}
If all the multinomial coefficients ${{n-1}\choose{{a_1},\dots,{a_{k}-1},\dots,{a_r}}}$ does not lie in the same residue class modulo $r$, by using Lemma \ref{l2} and Theorem \ref{t7} we have an equal number of compositions $(a_1,\dots,a_r)$ with the term $\sum_{k=1}^{r-1}k{{n-1}\choose{{a_1},\dots,{a_{k+1}-1},\dots,{a_r}}}$ visiting each residue class not congruent to $0$ mod $r$. In addition, if $n$ is not a multiple of $r$, then the number of compositions $(a_1,\dots,a_r)$ with the summation visiting each residue class modulo $r$ are equal.

Thus the number of compositions with parts from a given $a \vdash n$ such that\\  
$\sum_{k=1}^{r-1}k{{n-1}\choose{{a_1},\dots,{a_{k+1}-1},\dots,{a_r}}} \neq 0 {\ } mod {\ }r$ is
\[
 =
\begin{cases}
\dfrac{(r-2)!r}{\prod (no.{\ } of{\ }  repetitions {\ }of {\ }a_k's )!} \quad if{\ }n=0{\ }mod{\ }r\\
\dfrac{(r-1)!}{\prod (no.{\ } of{\ }  repetitions {\ }of {\ }a_k's )!} \quad if{\ }n\neq 0{\ }mod{\ }r\\
\end{cases}
\]
and since the number of multipartitions with $\prod_{k=1}^r f_{\lambda_k} \neq 0 {\ }mod{\ }r$ is $\prod_{k=1}^{r}{m_r(S_{a_k})}$ the equation follows immediately.

If $n$ is a multiple of $r$ then $x_{\lambda_1,\dots,\lambda_r} =0 {\ }mod{\ }r$   iff $\prod_{k=1}^r f_{\lambda_k} = 0 {\ }mod{\ }r$. Since the compositions of $n$ are obtained from all the possible permutations of $\{a_1,\dots,a_r\}$ and hence we have the formula
\begin{equation*}
N_{1}(a) +N_{-1}(a) = \dfrac{r!}{\prod (no.{\ } of{\ }  repetitions {\ }of {\ }a_k's )!}\big{[}\prod_{k=1}^{r}{p({a_k})}-\prod_{k=1}^{r}{m_r(S_{a_k})}\big{]}
\end{equation*}
If $n$ is not a multiple of $r$,
\begin{align*}
N_{1}(a) +N_{-1}(a)& = N_{\zeta ^s}(a) +N_{-\zeta ^s}(a)  + \dfrac{r!}{\prod (no.{\ } of{\ }  repetitions {\ }of {\ }a_k's )!}\big{[}\prod_{k=1}^{r}{p({a_k})}-\prod_{k=1}^{r}{m_r(S_{a_k})}\big{]}\\
&= \dfrac{(r-1)!}{\prod (no.{\ } of{\ }  repetitions {\ }of {\ }a_k's )!} \big{[}r \prod_{k=1}^{r}{p({a_k})}-(r-1)\prod_{k=1}^{r}{m_r(S_{a_k})}\big{]}
\end{align*}
\end{proof}
This help us to calculate $N_{\zeta^s}(a)$ for some special compositions of $n$ as mentioned below:
\begin{enumerate}
\item $a_i =a_j >2$, for some $i \neq j$        
\item any 4 are odd                      
\item there exists two $a_i ,a_j$, $i\neq j$ such that $ (bin(a_i)\setminus ord(a_i)) \cap (bin(a_j)\setminus ord(a_j)) \neq \phi $
\item any 2 are congruent to $3 {\ } mod {\ }4$
\item there exists $3$ $a_k$'s which are congruent to $2 {\ } mod {\ }4$ 
\item there exists $3$ $a_k$'s congruent to $3,2,1$ respectively modulo $4$.
\end{enumerate}
Combining all the formulas and the inequalities discussed above, knowing any one of the four values $N_{1}(a)$, $N_{-1}(a)$, $N_{\zeta}(a)$, or $N_{-\zeta}(a)$ will help us to get all the remaining values.

Observe that, $N_{\zeta^s}(n) = \sum_{a\in C(n)}{N_{\zeta^s}(a)}$. This helps us to calculate $N_{\zeta^s}(n)$ for a positive integer $n <r$, r prime using the equation \ref{maineq}. For a positive integer $n$ and a prime $r$, we also have
\begin{equation*}
|P(n,r)| = N_{1}(n)+ N_{-1}(n)+ (r-1) \big [N_{\zeta}(n)+N_{-\zeta}(n)\big].
\end{equation*}
Another interesting problem one can consider is to calculate the number of irreducible representations of $S_n$ with dimension relatively prime to both $2$ and an odd prime $r$. Once we have this count, by proving the results similar to that of in Lemma \ref{l3} with an additional condition on the parity of $f_\lambda$ with respect to $r$ and by using a similar approach to that of Proposition \ref{p8}, one can get the value of $N_\zeta(a)$ for all $n$ and $r$.
\section*{Acknowledgements}
The authors are very grateful to Amritanshu Prasad and Steven Spallone for their help and encouragement. AP is supported by Indian Institute of Science Education and Research Thiruvananthapuram institute fellowship. This research was driven by computer exploration using the open-source mathematical software Sage \cite{sage}.
\section{Appendix A}
In this section, we give an alternate approach to find the determinant of the irreducible representations of $G(n,r)$. We follow the approach as that in \cite{Ghosh}.
Let $H$ be a subgroup of the finite group $G$ and let $g_1,\dots,g_m$ be the  representatives for the distinct left cosets of $H$ in $G$. 
Let $e_1:=(1,0,\dots,0;1_{S_n})$ and $s_1:=(\vec{0};(1,2))$. Using the Frobenius character formula for the induced representations, we have the following proposition: 
\begin{proposition}
For given a $\lambda\in P(n,r)$, we have
\begin{equation*}
\chi_{\lambda} (e_1) = f_{\lambda_1}f_{\lambda_2}\dots f_{\lambda_r} \sum\limits_{k=0}^{r-1} \zeta^k{{n-1}\choose{{a_1},\dots,{a_{k+1}-1},\dots,{a_r}}}.
\end{equation*}
\end{proposition}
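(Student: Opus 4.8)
The plan is to evaluate the induced character $\chi_\lambda = \chi_{\mathrm{Ind}_{H}^{G(n,r)}\,\sigma}$, where $H := G(\lambda,r)$ and $\sigma := \boxtimes_{k=0}^{r-1}\rho_{\lambda_{k+1}}^{k}$, directly from the Frobenius formula
\begin{equation*}
\chi_{\mathrm{Ind}_H^{G}\sigma}(g) = \sum_{\substack{i\,:\, g_i^{-1} g g_i \in H}} \chi_\sigma(g_i^{-1} g g_i),
\end{equation*}
specialised to $g = e_1 = (1,0,\dots,0;1_{S_n})$. First I would recall, exactly as in the proof of Proposition \ref{p7}, that the left cosets of $H$ in $G(n,r)$ are in $S_n$-equivariant bijection with the ordered set partitions $\mathscr{P}(J_n)$ of $J_n=\{1,\dots,n\}$ into blocks of sizes $a_1,\dots,a_r$, and that one may choose a transversal whose image lies entirely in $S_n$; thus I write the coset representatives as $g_i = (\vec 0; w_i)$ with $w_i \in S_n$.

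The key simplification is that $\mathbb{Z}_r^n \subseteq H$, so the condition $g_i^{-1} e_1 g_i \in H$ holds for \emph{every} coset and no term is dropped from the Frobenius sum. A direct computation in the semidirect product gives $g_i^{-1} e_1 g_i = (w_i^{-1}\cdot(1,0,\dots,0);\,1_{S_n}) = e_{w_i^{-1}(1)}$, the canonical generator of $\mathbb{Z}_r^n$ supported in coordinate $w_i^{-1}(1)$. Next I would evaluate $\chi_\sigma(e_m)$ for a generator $e_m$ whose index $m$ lies in the $j$-th block $J_j=\{t_{j-1}+1,\dots,t_j\}$: since $e_m \in G(a_j,r)$ and $\sigma$ is an external tensor product on which the $j$-th factor carries the twist $\zeta^{j-1}$ (that is, $\rho_{\lambda_j}^{j-1}$), the value factors as
\begin{equation*}
\chi_\sigma(e_m) = \zeta^{j-1}\, f_{\lambda_1}\cdots f_{\lambda_r},
\end{equation*}
the remaining factors contributing their identity character values $f_{\lambda_l}$.

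It then remains to count, for each block index $k\in\{1,\dots,r\}$, how many cosets satisfy $w_i^{-1}(1)\in J_k$. Transporting this through the bijection of Proposition \ref{p7} (where $^{t(x)^{-1}}(1)\in J_k$ precisely when $1$ lies in the $k$-th block $x_k$), this equals the number of ordered set partitions of $J_n$ with $1$ placed in the $k$-th block, namely $\binom{n-1}{a_1,\dots,a_k-1,\dots,a_r}$. Grouping the Frobenius sum according to the block $k$ that receives $1$, and reindexing $k\mapsto k+1$, yields
\begin{equation*}
\chi_\lambda(e_1) = f_{\lambda_1}\cdots f_{\lambda_r}\sum_{k=0}^{r-1}\zeta^{k}\binom{n-1}{a_1,\dots,a_{k+1}-1,\dots,a_r},
\end{equation*}
as claimed. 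The only delicate points will be keeping the block-to-twist indexing consistent (verifying that block $j$ carries $\zeta^{j-1}$) and checking the conjugation identity $g_i^{-1} e_1 g_i = e_{w_i^{-1}(1)}$ against the group law of $\mathbb{Z}_r^n\rtimes S_n$; everything else is exactly the multinomial bookkeeping already developed for the $ver$ map.
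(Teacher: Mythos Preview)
Your proposal is correct and follows essentially the same approach as the paper's proof: both apply the Frobenius character formula for the induced representation, use a transversal supported in $S_n$ (via the identification with ordered set partitions $\mathscr{P}(J_n)$), compute the conjugate $g_i^{-1}e_1g_i$ as $e_{w_i^{-1}(1)}$, and then count the cosets for which $1$ lands in the $k$-th block by the multinomial $\binom{n-1}{a_1,\dots,a_k-1,\dots,a_r}$. Your write-up is in fact slightly more explicit than the paper's (you spell out why every coset contributes and track the factor $f_{\lambda_1}\cdots f_{\lambda_r}$ from the identity blocks), but the argument is the same.
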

\begin{proof}
Recall from section 3 that $J_n=\{1,2,\dots,n\}$ and $\mathscr{P}(J_n)$. Let us fix 
\begin{equation*}
J = \{\{1,2,\dots,a_1\},\{a_1+1,\dots,a_1+a_2\},\dots,\{n-a_r,n-a_{r+1},\dots,n\}\} \in \mathscr{P}(J_n).
\end{equation*}
Since 
\begin{equation*}
Orb(J) \cong S_{a_1+\dots+a_r}/S_{a_1}\times\dots\times S_{a_r}, 
\end{equation*}
and by using the Frobenius character formula, we have
\begin{equation*}
{Ind}_{G(a_1,r)\times G(a_2,r) \times\dots\times G(a_r,r)} ^{G(n,r)}\rho_{\lambda_{1}}^0\boxtimes \dots\boxtimes \rho_{\lambda_{r}}^{r-1}(e_1) = \sum\limits_{i=1}^{n}\rho_{\lambda_{1}}^0\boxtimes \dots\boxtimes \rho_{\lambda_{r}}^{r-1}({g_i}^{-1}e_1g_i).
\end{equation*}
Now let us define the map $g_i$ by
\begin{align*}
e_1 &\mapsto {g_i}^{-1}e_1g_i\\
g_i : \small{\begin{pmatrix}
1\\ \vdots \\ a_1 \\a_1+1 \\ \vdots \\ a_1+a_2\\ \vdots \\n
\end{pmatrix}}
&\mapsto 
\begin{pmatrix}
x_{i_1}^{1}\\ \vdots \\ x_{i_{a_1}}^{1} \\x_{i_1}^{2} \\ \vdots \\ x_{i_{a_2}}^{2}\\ \vdots \\ x_{i_{a_r}}^{r}
\end{pmatrix}
= \begin{pmatrix}
 \\  X_{a_1}\\ \\   \\X_{a_2} \\ \\  \vdots \\ \\  X_{a_r}
\end{pmatrix}
\end{align*}
Thus we have,
\begin{equation*}
\#\{g_i |1\in X_{a_1}\} = \#\{g_i |1\notin X_{a_2}\cup \dots \cup X_{a_r} \} = {{n-1}\choose {a_1-1, a_2,\dots,a_r}}.
\end{equation*}
The number of $g_i$'s such that
\begin{equation*}
 \rho_{\lambda_{1}}^0\boxtimes \dots\boxtimes \rho_{\lambda_{r}}^{r-1}({g_i}^{-1}e_1g_i) = \zeta^k ,\,  0\leq k\leq r
\end{equation*}
is ${{n-1}\choose {a_1,\dots,a_{k+1}-1,\dots,a_r}}$. Hence, 
\begin{equation*}
\chi_{\lambda_1,\lambda_2,\dots,\lambda_r} (e_1) = f_{\lambda_1}f_{\lambda_2}\dots f_{\lambda_r} \sum\limits_{k=0}^{r-1} \zeta^k{{n-1}\choose{{a_1},\dots,{a_{k+1}-1},\dots,{a_r}}}
\end{equation*}
\end{proof}
\begin{proposition}
For a given $\lambda\in P(n,r)$, we have
\begin{equation*}
\chi_{\lambda} (s_1) = \sum\limits_{k=1}^r{\widehat{f}_{\lambda_k} \chi_{\lambda_k}((1,2)){{n-2}\choose{{a_1},\dots,{a_k-2},\dots,{a_r}}}}.
\end{equation*}
\end{proposition}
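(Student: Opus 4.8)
The plan is to mirror the preceding $\chi_\lambda(e_1)$ computation and evaluate the Frobenius character formula for the induced representation $\rho_\lambda=\mathrm{Ind}_{G(\lambda,r)}^{G(n,r)}\boxtimes_{k=0}^{r-1}\rho_{\lambda_{k+1}}^{k}$ at the transposition $s_1=(\vec 0;(1,2))$. As in the proof of Proposition \ref{p7}, I would choose a transversal $g_1,\dots,g_m$ of the cosets $G(n,r)/G(\lambda,r)\cong\mathscr{P}(J_n)$ whose image lies in $S_n$, so that each $g_i$ may be treated as an honest permutation and each coset corresponds to an ordered set partition $x=(x_1,\dots,x_r)$ of $J_n$ into blocks of sizes $a_1,\dots,a_r$. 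Writing $\sigma=\rho_{\lambda_1}^0\boxtimes\cdots\boxtimes\rho_{\lambda_r}^{r-1}$ and extending $\chi_\sigma$ by zero off $G(\lambda,r)$ (denoted $\dot\chi_\sigma$), the Frobenius formula reads
\[
\chi_\lambda(s_1)=\sum_{i}\dot\chi_\sigma\!\left(g_i^{-1}s_1 g_i\right).
\]

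The first key step is to identify which cosets contribute. Since $g_i\in S_n$ and $s_1$ has trivial $\mathbb{Z}_r^n$-part, the conjugate $g_i^{-1}s_1 g_i$ is again a pure transposition $(g_i^{-1}(1),g_i^{-1}(2))$ with trivial $\mathbb{Z}_r^n$-component. This lies in $G(\lambda,r)=G(a_1,r)\times\cdots\times G(a_r,r)$ exactly when $g_i^{-1}(1)$ and $g_i^{-1}(2)$ fall in the same block $J_k$, equivalently when $1$ and $2$ lie in the same part $x_k$ of the associated set partition. In that situation the contributing element is a transposition $\tau$ sitting inside the $k$-th factor $G(a_k,r)$, with the identity in every other factor.

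The second step is to evaluate $\dot\chi_\sigma$ on such an element. Since the character of an external tensor product is the product of the factor characters, and all factors except the $k$-th are evaluated at the identity, the value is $\widehat{f}_{\lambda_k}\,\chi_{\rho_{\lambda_k}^{k-1}}(\tau)$. Because $\tau$ has trivial $\mathbb{Z}_r$-part, the twist $\rho_{\lambda_k}^{k-1}(x;w)=\zeta^{k-1}(x)\rho_{\lambda_k}(w)$ contributes the factor $\zeta^{k-1}(\vec 0)=1$, whence $\chi_{\rho_{\lambda_k}^{k-1}}(\tau)=\chi_{\lambda_k}((1,2))$; this is well defined and independent of which transposition arises, as $\chi_{\lambda_k}$ is a class function on $S_{a_k}$.

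Finally I would count the cosets having $1,2$ in a fixed block $x_k$: after placing $1,2$ there, the remaining $n-2$ letters are freely distributed among blocks of sizes $a_1,\dots,a_k-2,\dots,a_r$, giving exactly ${{n-2}\choose{{a_1},\dots,{a_k-2},\dots,{a_r}}}$ cosets (and none when $a_k<2$, consistent with the binomial convention). Multiplying the common contribution $\widehat{f}_{\lambda_k}\,\chi_{\lambda_k}((1,2))$ by this count and summing over $k=1,\dots,r$ yields the asserted formula. I expect the only delicate point to be the bookkeeping that matches the cosets $g_i^{-1}s_1 g_i\in G(\lambda,r)$ with the set-partition model and confirms that the $\mathbb{Z}_r$-twist drops out; both are dispatched exactly as in the preceding $\chi_\lambda(e_1)$ calculation, so no genuinely new difficulty arises.
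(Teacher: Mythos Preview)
Your proposal is correct and follows essentially the same approach as the paper: apply the Frobenius character formula for the induced representation, observe that $g_i^{-1}s_1g_i$ is the transposition $(g_i^{-1}(1),g_i^{-1}(2))$, note that only those cosets with $1$ and $2$ landing in the same block $X_{a_k}$ contribute, and count these as ${{n-2}\choose{a_1,\dots,a_k-2,\dots,a_r}}$. If anything, your write-up is more explicit than the paper's about why the non-contributing cosets vanish and why the $\zeta$-twist is invisible on $s_1$, but the underlying argument is identical.
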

\begin{proof}
\begin{equation*}
{Ind}_{G(a_1,r)\times G(a_2,r) \times\dots\times G(a_r,r)} ^{G(n,r)}\rho_{\lambda_{1}}^0\boxtimes \dots\boxtimes \rho_{\lambda_{r}}^{r-1}( s_1 )= \sum\limits_{i=1}^{n}\rho_{\lambda_{1}}^0\boxtimes \dots\boxtimes \rho_{\lambda_{r}}^{r-1}({g_i}^{-1}s_1g_i).
\end{equation*} 
Also, we know that ${g_i}^{-1}s_1g_i = {g_i}^{-1}(1){g_i}^{-1}(2)$.  Now consider the map $g_i$ given by
\begin{align*}
s_1 &\mapsto ({g_i}^{-1}s_1g_i)\\
g_i : \begin{pmatrix}
1\\ \vdots \\ a_1 \\a_1+1 \\ \vdots \\ a_1+a_2\\ \vdots \\n
\end{pmatrix}
&\mapsto 
\begin{pmatrix}
x_{i_1}^{1}\\ \vdots \\ x_{i_{a_1}}^{1} \\x_{i_1}^{2} \\ \vdots \\ x_{i_{a_2}}^{2}\\ \vdots \\ x_{i_{a_r}}^{r}
\end{pmatrix}
= \begin{pmatrix}
 \\  X_{a_1}\\ \\   \\X_{a_2} \\ \\  \vdots \\ \\  X_{a_r}
\end{pmatrix}.
\end{align*}
Thus, 
\begin{equation*}
\#\{g_i |1\in X_{a_i} and {\ }2\in  X_{a_i} \} = {{n-2}\choose {a_1,\dots, a_i-2,\dots,a_r}}.
\end{equation*}
Hence, 
\begin{equation*}
\chi_{\lambda} (s_1) = \sum\limits_{k=1}^r{\widehat{f}_{\lambda_k} \chi_{\lambda_k}(s_1){{n-2}\choose{{a_1},\dots,{a_k-2},\dots,{a_r}}}}.
\end{equation*}
\end{proof}
This character formulas help us to calculate the determinant of $\rho_{\lambda}$. Note that the determinant (resp. trace) of a matrix is the product (resp. sum) of all eigen values.
Let $x_{\lambda}$ denote the power of  $\zeta$ in $det \rho_{\lambda}$. From the above formula for $\chi_{\lambda} (s_1)$, we can see that each eigen value $\zeta^k$ for $0 \leq k \leq r-1$  has multiplicity $f_{\lambda_1}f_{\lambda_2}\dots f_{\lambda_r} {{n-1}\choose{{a_1},\dots,{a_{k+1}-1},\dots,{a_r}}}$. Hence, we have
\begin{equation*}
x_{\lambda} =f_{\lambda_1}f_{\lambda_2}\dots f_{\lambda_r} \sum\limits_{k=1}^{r-1} k{{n-1}\choose{{a_1},\dots,{a_{k+1}-1},\dots,{a_r}}}.
\end{equation*}
Let $y_{\lambda}$ denote the multiplicity of  $-1$ an eigenvalues of $\rho_{\lambda}$. Then we have,
\begin{align*}
y_{\lambda} &=\dfrac{\chi_{\lambda}(1,1,\dots,1;1_{S_n})-\chi_{\lambda}(s_1)}{2}\\
&= \dfrac{f_{\lambda_1}\dots f_{\lambda_r}  {n \choose {a_1,\dots,a_r}}-\sum\limits_{k=1}^r{\widehat{f}_{\lambda_k} \chi_{\lambda_k}(s_1){{n-2}\choose{{a_1},\dots,{a_k-2},\dots,{a_r}}}} }{2}.
\end{align*}
Using $g_{\lambda} = \dfrac{f_{\lambda}-\chi_{\lambda}}{2}$, we have
\begin{align*}
y_{\lambda} &=\frac{f_{\lambda_1}\dots f_{\lambda_r}}{2}  [{n \choose {a_1,\dots,a_r}}- \sum\limits_{k=1}^r{ {{n-2}\choose{{a_1},\dots,{a_k-2},\dots,{a_r}}}}] +\sum\limits_{k=1}^r{\widehat{f}_{\lambda_k} g_{\lambda_k}{{n-2}\choose{{a_1},\dots,{a_k-2},\dots,{a_r}}}}\\
&=f_{\lambda_1}f_{\lambda_2}\dots f_{\lambda_r} {\dfrac{(n-2)!}{a_1!a_2!\dots a_r!}}\sum\limits_{i \neq j}{a_ia_j} +\sum\limits_{k=1}^r{ g_{\lambda_k}\widehat{f}_{\lambda_k}{{n-2}\choose{{a_1},\dots,{a_k-2},\dots,{a_r}}}}. 
\end{align*}
Therefore, \begin{equation*}
det \rho _{\lambda} = \zeta^{x_{\lambda}} (sgn)^{y_{\lambda}}.
\end{equation*}
\section{Appendix B}
In this section, for given small values of $r$, a prime, we have computed the table values of $N_{\zeta^s}(n)$ and logplot, base $2$, of $N_{\zeta^s}(n)$ for $n$ varying from $1$ to $10$. The red and blue lines are $N_{1}(n)$ and $N_{-1}(n)$ respectively. The green and orange lines are  $N_{\zeta^s}(n)$ (i.e., $x_\lambda \neq 0{\ } mod {\ }r$ and $y_\lambda = 0{\ } mod {\ }2$) and $N_{-\zeta^s}(n)$ (i.e., $x_\lambda \neq 0{\ } mod {\ }r$ and $y_\lambda = 1{\ } mod {\ }2$), for $1\leq s <r$ respectively.

For $r=2$
\begin{table}[h]
\begin{tabular}{|c|c|c|c|c|}
\hline
n & $N_1(n)$ & $N_{\zeta}(n)$  & $N_{-\zeta}(n)$ &  $N_{-1}(n)$ \\ \hline
1 & 1        & 1        & 0        & 0        \\ \hline
2 & 1        & 1        & 2        & 1        \\ \hline
3 & 2        & 2        & 2        & 4        \\ \hline
4 & 4        & 4        & 8        & 4        \\ \hline
5 & 8        & 4        & 4        & 20       \\ \hline
6 & 33       & 8        & 16       & 8        \\ \hline
7 & 46       & 16       & 16       & 32       \\ \hline
8 & 69       & 28       & 60       & 28       \\ \hline
9 & 116      & 8        & 8        & 168         \\ \hline
10 & 417     & 32       & 16       & 16          \\ \hline
\end{tabular}
\end{table}
\begin{figure}[H]
  \includegraphics[width=\linewidth]{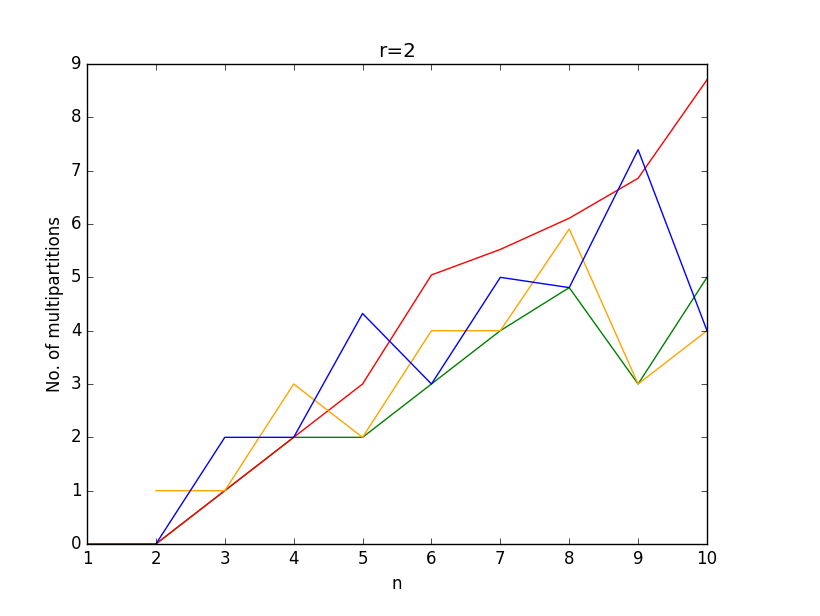}
  \label{fig:r2}
\end{figure}
for $r=3$
\begin{table}[h]
{\tiny
\begin{tabular}{|c|c|c|c|c|c|c|c|c|c|c|}
\hline
n  & $N_1(n)$ & $N_{\zeta}(n)$ & $N_{\zeta^2}(n)$ & $N_{-\zeta}(n)$ &$N_{-\zeta^2}(n)$ & $N_{-1}(n)$ \\ \hline
1  & 1        &1       & 1        &0        & 0 & 0        \\ \hline
2  & 1        & 1        & 1        &2        & 2 & 2        \\ \hline
3  & 1        &4        & 4       & 5        & 5 & 3        \\ \hline
4  & 12       &3        &3      & 6        &  6          & 21       \\ \hline
5  & 21       &9        &9        & 18       & 18 & 33       \\ \hline
6  & 47       & 51       &51       & 18      & 18 & 30       \\ \hline
7  & 201      & 36       & 36       & 18       & 18 & 120      \\ \hline
8  & 244      &97       & 97       & 65      & 65  & 242      \\ \hline
9  & 280      & 217      &217     &197      &197 & 362      \\ \hline
10 & 2454     & 21       & 21      & 6        &6 & 132      \\ \hline
\end{tabular}}
\end{table}
\begin{figure}[H]
  \includegraphics[width=\linewidth]{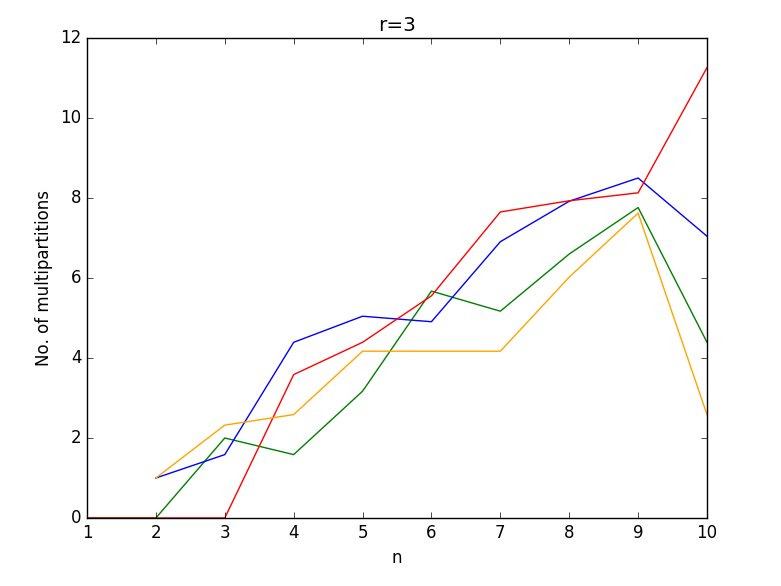}
  \label{fig:r3}
\end{figure}
for r=5
\begin{table}[h]
{\tiny
\begin{tabular}{|c|c|c|c|c|c|c|c|c|c|c|c|c|c|}
\hline
n  & $N_1(n)$ & $N_{\zeta}(n)$ & $N_{\zeta^2}(n)$ & $N_{\zeta^3}(n)$ & $N_{\zeta^4}(n)$ & $N_{-\zeta}(n)$ & $N_{-\zeta^2}(n)$ & $N_{-\zeta^3}(n)$ & $N_{-\zeta^4}(n)$ &$N_{-1}(n)$ \\ \hline
1  &1     &1        & 1        & 1        &1        & 0        & 0 & 0 & 0        & 0        \\ \hline
2  & 1        &1       & 1       & 1        & 1        & 3       & 3       & 3      & 3 &3        \\ \hline
3  & 5     &5       & 5      & 5       & 5        & 8        & 8        & 8        &8 & 8        \\ \hline
4  &11       &11      & 11      & 11       & 11 & 27       & 27       &27       & 27      & 27       \\ \hline
5  &37    &36     & 36    & 36      & 36 & 69      & 69       &69       & 69      & 49       \\ \hline
6  &905     &15     & 15      & 15      & 15    &10       & 10       & 10 &10      & 260      \\ \hline
7  &1926     & 66      & 66       &66      & 66 & 34        & 34        &34       & 34       & 664      \\ \hline
8  &3357   &197     &197      & 197       &197  & 128      & 128 & 128 & 128       & 2168     \\ \hline
9  & 6465     & 540      & 540     & 540     & 540  &410      & 410     &410     &410  & 4460     \\ \hline
10 & 20519    &2477    & 2477     &2477       & 2477    & 93       & 93       & 93 &93 & 228      \\ \hline
\end{tabular}}
\end{table}
\begin{figure}[H]
  \includegraphics[width=\linewidth]{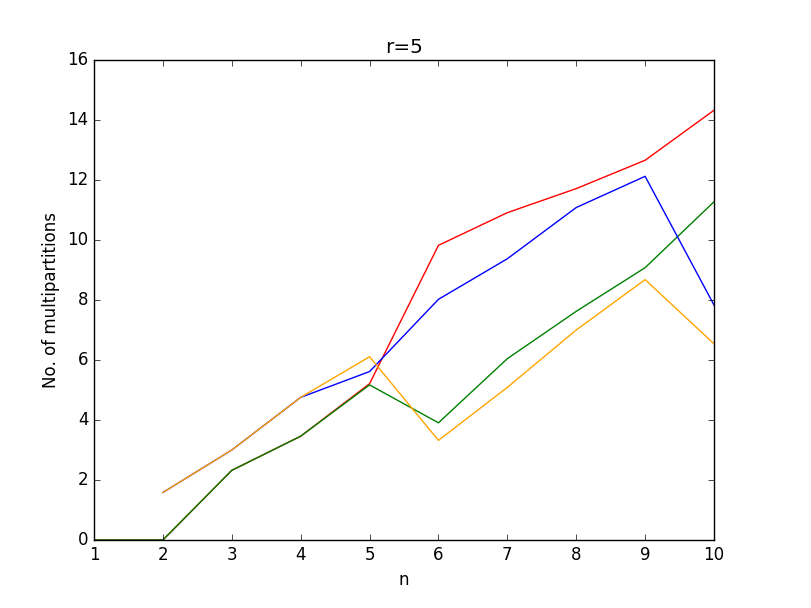}
  \label{fig:r5}
\end{figure}
for $r=7$
\begin{table}[h]
\resizebox{\textwidth}{!}{
\begin{tabular}{|c|c|c|c|c|c|c|c|c|c|c|c|c|c|c|c|c|c|}
\hline
n  & $N_1(n)$ & $N_{\zeta}(n)$ & $N_{\zeta^2}(n)$ & $N_{\zeta^3}(n)$ & $N_{\zeta^4}(n)$ & $N_{\zeta^5}(n)$ & $N_{\zeta^6}(n)$ & $N_{-\zeta}(n)$ & $N_{-\zeta^2}(n)$ & $N_{-\zeta^3}(n)$ & $N_{-\zeta^4}(n)$ & $N_{-\zeta^5}(n)$ & $N_{-\zeta^6}(n)$ &$N_{-1}(n)$ \\ \hline
1   & 1        & 1        & 1        & 1       & 1       & 1       & 1       & 0        & 0        & 0       & 0        & 0        & 0       & 0        \\ \hline
2 &1        & 1      & 1        & 1        &1        & 1        & 1        & 4        & 4        & 4        & 4        & 4        & 4        & 4        \\ \hline
3  & 7        & 7        & 7        & 7        & 7        & 7        &7        & 13       & 13       & 13       & 13       & 13       & 13       &13       \\ \hline
4  & 19       & 19       & 19       & 19       & 19       & 19       & 19       & 51       & 51       & 51       & 51       & 51       & 51       & 51       \\ \hline
5  & 74      & 74       & 74       & 74       &74       & 74      & 74       & 147      & 147      & 147      & 147      & 147      & 147      & 147      \\ \hline
6  & 534     & 534      & 534      & 534     & 534      & 534      & 534     & 112      & 112      & 112      & 112     & 112      & 112      & 112      \\ \hline
7  & 1397     & 1410     & 1410     & 1410     & 1410     & 1410    & 1410     & 368      & 368      & 368      & 368      & 368      & 368      & 340      \\ \hline
8  & 22351    & 21       & 21       & 21       & 21       & 21       & 21      & 28       & 28       & 28       & 28       & 28       & 28       & 9660     \\ \hline
9  & 52199    & 105      & 105      & 105      & 105      & 105      & 105      & 140      & 140      & 140      & 140      & 140      & 140      & 26796    \\ \hline
10 & 185871   & 910      & 910      & 910      & 910      & 910      & 910      & 70       & 70       & 70      & 70       & 70       & 70       & 1148     \\ \hline
\end{tabular}}
\end{table}
\begin{figure}[H]
  \includegraphics[width=\linewidth]{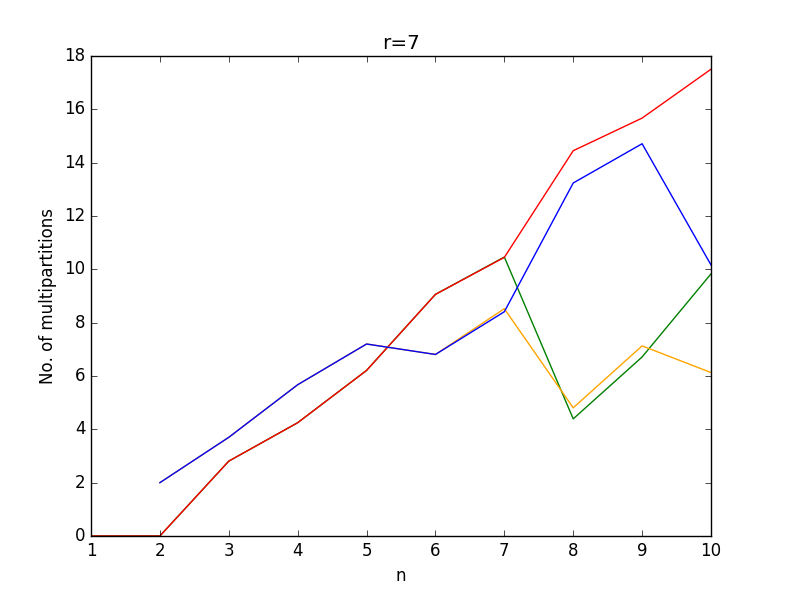}
  \label{fig:r7}
\end{figure}

\end{document}